\documentclass[12pt,final]{amsart}
\usepackage{euscript,nicefrac,fancybox,xcolor}
\usepackage{xspace,mathrsfs}
\usepackage{inputenc}
\usepackage{amscd,tikz}
\usepackage{tikz-cd}
\usepackage{latexsym}
\usepackage{xr,xr-hyper}
\usepackage{epsfig,pxfonts, yfonts}
\usepackage{ifpdf}
\usepackage{euscript,nicefrac,fancybox,xcolor}
\usepackage[nomargin]{fixme}
\usepackage{xspace,mathrsfs}
\usepackage{inputenc}
\usepackage{amscd,tikz}
\usepackage{latexsym}
\usepackage{xr,xr-hyper}
\usepackage{epsfig,pxfonts, yfonts}
\usepackage{ifpdf}
\input xy
\xyoption{all}
\usepackage[stable]{footmisc}

  \usepackage{hyperref}
  \hypersetup{
final,
linktoc=all
}
\usepackage[margin=3cm,footskip=25pt,headheight=21pt]{geometry}
\usepackage{fancyhdr}
\pagestyle{fancy}
\chead[\theshorttitle]{Gammage, McBreen \&  Webster}
\cfoot[\thepage]{\thepage}

\newtheorem{theorem}{Theorem}[section]
\newtheorem*{theorem*}{Theorem}

\newtheorem{lemma}[theorem]{Lemma}
\newtheorem{proposition}[theorem]{Proposition}
\newtheorem{corollary}[theorem]{Corollary}

\newtheorem{conjecture}[theorem]{Conjecture}

\newtheorem*{conjecture*}{Conjecture}
\newtheorem*{proposition*}{Proposition}
\newtheorem{itheorem}{Theorem}

{
\theoremstyle{plain}
\newtheorem{definition}[theorem]{Definition}
\newtheorem{example}[theorem]{Example}

\newtheorem{remark}[theorem]{Remark}

\newtheorem{warning}[theorem]{Warning}
}

\renewcommand{\theequation}{\arabic{section}.\arabic{equation}}
\newcounter{subeqn}

\makeatletter
\@addtoreset{subeqn}{equation}




\newcommand{\nc}{\newcommand}

\nc{\PR}{D^i\!R}
\nc{\rola}{X}
\nc{\wela}{Y}
\nc{\Lcat}{\mathsf{DQ}}
\nc{\lcat}{\mathsf{dq}}

\nc{\cat}{\mathcal{V}}
\nc{\func}{\EuScript{T}}
\nc{\res}{\operatorname{res}}
\newcommand{\Spec}{\operatorname{Spec}}
\nc{\sL}{\EuScript{L}}
\nc{\tsL}{\tilde{\sL}}

\newcommand{\Pic}{\operatorname{Pic}}

\renewcommand{\Re}{\operatorname{Re}}

\newcommand{\id}{\operatorname{id}}

\renewcommand{\dim}{\operatorname{dim}}

\newcommand{\Sym}{\operatorname{Sym}}
\nc{\bQ}{\mathbb{N}}
\nc{\op}{\operatorname{op}}
\newcommand{\rk}{\operatorname{rk}}

\nc{\sEnd}{\mathcal{E}\mathit{nd}}

\newcommand{\chamber}{\Delta}

\newcommand{\Ba}{\mathbf{a}}

\newcommand{\bP}{\mathbf{P}}

\newcommand{\Bx}{\mathbf{x}}
\newcommand{\By}{\mathbf{y}}

\nc{\sz}{\mathsf{z}}
\nc{\sw}{\mathsf{w}}
\nc{\sx}{\mathsf{x}}
\nc{\sy}{\mathsf{y}}
\nc{\su}{\mathsf{u}}
\nc{\sv}{\mathsf{v}}
\nc{\dF}{\mathsf{F}}
\nc{\dE}{\mathsf{E}}
\nc{\lle}{\Gamma}
\nc{\Dc}{d}
\nc{\pS}{S}
\nc{\ps}{\mathsf{s}}

\nc{\pt}{\mathsf{t}}

\nc{\slehat}{\mathfrak{\widehat{sl}}_e}
\nc{\sllhat}{\mathfrak{\widehat{sl}}_\ell}
\nc{\glehat}{\mathfrak{\widehat{gl}}_e}
\nc{\slnhat}{\mathfrak{\widehat{sl}}_n}
\nc{\glnhat}{\mathfrak{\widehat{gl}}_n}
\nc{\eE}{\EuScript{E}}
\newcommand{\arxiv}[1]{\href{http://arxiv.org/abs/#1}{\tt arXiv:\nolinkurl{#1}}}
\nc{\ep}{\epsilon}
\nc{\RHom}{\mathbb{R}\operatorname{Hom}}
\nc{\sHom}{\mathscr{H}\!\mathit{om}}
\nc{\eF}{\EuScript{F}}
\nc{\fF}{\mathfrak{F}}
\nc{\fE}{\mathfrak{E}}
\nc{\fI}{\mathfrak{I}}
\nc{\fp}{\mathfrak{p}}

\newcommand{\Z}{\mathbb{Z}}
\newcommand{\Q}{\mathbb{Q}}
\nc{\Qlb}{\mathbb{\bar \Q}_\ell}
\nc{\Fq}{\mathbb{F}_q}
\nc{\Fqb}{\mathbb{\bar F}_q}

\nc{\walg}{W}

\newcommand{\R}{\mathbb{R}}

\newcommand{\C}{\mathbb{C}}

\nc{\KZ}{\mathsf{KZ}}

\newcommand{\DQ}{\mathsf{DQ}}


\nc{\Bv}{\mathbf{v}}
  \nc{\Bw}{\mathbf{w}}
\nc{\perf}{\operatorname{-perf}}
  \nc{\Bp}{\mathbf{p}}
 \nc{\tU}{\mathcal{U}}
\nc{\Bu}{\mathbf{u}}
 \nc{\Fl}{\mathscr{F}\!\ell}
\nc{\Tr}{\operatorname{Tr}}
\nc{\cata}{\mathfrak{V}}
\nc{\tcat}{\tilde{\mathcal{V}}}
\nc{\tcata}{\tilde{\mathfrak{V}}}
\nc{\sheafK}{\EuScript{K}}
\nc{\bmu}{\boldsymbol{\mu}}
\nc{\bpi}{\boldsymbol{\pi}}
\nc{\dwalg}{\mathbb{W}}
\nc{\dalg}{\mathbb{T}}
\nc{\aalg}{\mathbb{A}}
\nc{\alm}{\mathscr{A}}
\nc{\Hbb}{\mathbb{H}}
\nc{\bra}{\mathscr{B}}
\nc{\bO}{\mathbb{O}}

\nc{\Kos}{\EuScript{K}}
\nc{\tilt}{\mathscr{T}}

\newcommand{\al}{\mathcal{L}^{(+)}}
\newcommand{\ml}{\mathcal{L}^{(\times)}}

\newcommand{\Hom}{\operatorname{Hom}}

\newcommand{\fd}{\mathfrak {d}}
\newcommand{\ft}{\mathfrak {t}}

\newcommand{\cP}{\mathcal{P}}

\newcommand{\cO}{\mathcal{O}}
\newcommand{\Dolfiber}{\mathcal{L}}

\newcommand{\LL}{\mathbb{L}}

\newcommand{\becircled}{\mathaccent "7017}

\newcommand{\Ext}{\operatorname{Ext}}

\newcommand{\cS}{\mathcal{S}}
\newcommand{\cF}{\mathcal{F}}

\newcommand{\cQ}{\mathscr{Q}}

\newcommand{\cC}{\mathscr{C}}

\newcommand{\Loc}{\operatorname{Loc}}

\newcommand{\excise}[1]{}

\newcommand{\End}{\operatorname{End}}

\newcommand{\fg}{\mathfrak{g}}

\newcommand{\mmod}{\operatorname{-mod}}
\newcommand{\dgmod}{\operatorname{-mod}_{\operatorname{dg}}}
\newcommand{\dgperf}{\operatorname{-perf}_{\operatorname{dg}}}

\newcommand{\Coh}{\mathsf{Coh}}
\newcommand{\Cohdg}{\mathsf{Coh}_{\operatorname{dg}}}

\newcommand{\AC}{\mathbb{C}}
\newcommand{\ACn}{\mathbb{C}^n}
\newcommand{\tm}{\tilt^{(\times)}}
\newcommand{\ta}{\tilt^{(+)}}
\newcommand{\TAno}{(T^*\C^n)^\circ}
\newcommand{\TAo}{(T^*\C)^\circ}
\newcommand{\Lotimes}{\overset{L}{\otimes}}
\newcommand{\Bfield}{\gamma}
\newcommand{\logB}{\tilde{\gamma}}
\newcommand{\GIT}{\delta}
\newcommand{\tone}{1}
\newcommand{\node}{\operatorname{node}}

\newcommand{\cG}{\mathcal{G}}
\newcommand{\cR}{\mathcal{R}}

\FXRegisterAuthor{w}{w}{Ben}
\FXRegisterAuthor{g}{g}{Benjamin}
\FXRegisterAuthor{m}{m}{Michael}
\FXRegisterAuthor{v}{v}{Vivek}
\FXRegisterAuthor{h}{h}{Justin}
\makeatletter
\renewcommand*\FXLayoutInline[3]{%
  {\@fxuseface{inline}
  
  \ignorespaces\noindent \ovalbox{\hspace{.03\textwidth} \begin{minipage}{.9\textwidth}
  	#3 \fxnotename{#1}: #2
  \end{minipage}\hspace{.03\textwidth}}}}
\makeatother

\newcommand{\mtodo}[1]{\fxnote[inline,author=Michael]{\color{magenta!50!black}#1}

}

\newcommand{\thetitle}{Homological mirror symmetry for hypertoric varieties II}
\newcommand{\theshorttitle}{Homological mirror symmetry for hypertoric varieties II}
\nc{\iwedge}[1]{\bigwedge\nolimits^{\! #1}}
\nc{\wedgep}[1]{\iwedge{#1}\C^n}
\nc{\fsl}{\mathfrak{sl}}
\nc{\sln}{\mathfrak{sl}_n}
\nc{\divisor}{\mathscr{D}}


\newcommand{\Fuk}{\mathsf{Fuk}}
\newcommand{\Sh}{\mathsf{Sh}}
\newcommand{\Shc}{\mathsf{Sh}^c}
\newcommand{\Perv}{\mathsf{Perv}}
\newcommand{\D}{\mathbb{D}}

\newcommand{\mmm}{\becircled{\mu}}
\newcommand{\mht}{\mathfrak{U}}
\newcommand{\aht}{\mathfrak{M}}
\newcommand{\cH}{\mathcal{H}}
\newcommand{\cL}{\mathcal{L}}


\newcommand{\Arg}{\operatorname{Arg}}

\newcommand{\Bper}{\textgoth{A}^{\operatorname{per}}}
\newcommand{\Btor}{\textgoth{A}^{\operatorname{tor}}}
\newcommand{\Dtor}{\textgoth{D}^{\operatorname{tor}}}
\newcommand{\Dper}{\textgoth{D}^{\operatorname{per}}}

\newcommand{\tilLL}{\widetilde{\mathbb{L}}}
\newcommand{\twist}{\mathfrak{Z}}
\newcommand{\fOV}{\twist}

\newcommand{\parone}{\beta}
\newcommand{\partwo}{\alpha}
\newcommand{\parr}{\gamma}
\newcommand{\scrE}{\mathscr{E}}

\newcommand{\scrH}{\mathscr{H}}
\newcommand{\scrL}{\mathscr{L}}
\newcommand{\bsL}{\overline{\mathscr{L}}}
\newcommand{\scrD}{\mathscr{D}}
\newcommand{\scrQ}{\mathscr{Q}}
\newcommand{\qsch}{\mathscr{Q}^{\text{sch}}}
\newcommand{\bqsch}{\overline{\mathscr{Q}}^{\text{sch}}}
\newcommand{\sfam}{\mathfrak{X}}
\newcommand{\frX}{\mathfrak{X}}
\newcommand{\sfamq}{\overline{\sfam}}

\newcommand{\circuit}{\sigma}

\newcommand{\mhta}{\mht_{(\parone,\tone)}}

\newcommand{\LGr}{\operatorname{LGr}}
\newcommand{\LCGr}{\operatorname{L^\C Gr}}
\newcommand{\muPerv}{\mu\mathsf{Perv}}

\newcommand{\tDolfiber}{\widetilde{\Dolfiber}}

\newcommand{\scale}{\mathbb{C}^\times}
\newcommand{\Log}{\operatorname{Log}}

\newcommand{\naiveliouville}{\mathscr{X}}
\newcommand{\tX}{\widetilde{\naiveliouville}}
\newcommand{\Dolb}{\mathfrak{D}}
\newcommand{\tDolb}{\widetilde{\Dolb}}

\newcommand{\Lieder}{\mathcal{L}}
\newcommand{\tfOV}{\widetilde{\fOV}}
\newcommand{\tfOValg}{\tfOV^{\operatorname{alg}}}
\newcommand{\momi}{\mu_I}
\newcommand{\momj}{\mu_J}
\newcommand{\momk}{\mu_K}

\newcommand{\trihamvec}{\mathscr{Y}}
\newcommand{\hamvec}{\mathscr{W}}
\newcommand{\liouvillevec}{\mathscr{Z}}

\newcommand{\Dolparone}{\mathfrak{D}_\parone}
\newcommand{\tDolparone}{\widetilde{\mathfrak{D}}_\parone}
\newcommand{\tDolalgparone}{\tDolb^{\operatorname{alg}}_\parone}

\AtBeginDocument{%
   \def\MR#1{}
}

\begin{document}

\renewcommand{\theitheorem}{\Alph{itheorem}}

\usetikzlibrary{decorations.pathreplacing,backgrounds,decorations.markings,calc,
shapes.geometric}
\tikzset{wei/.style={draw=red,double=red!40!white,double distance=1.5pt,thin}}
\tikzset{bdot/.style={fill,circle,color=blue,inner sep=3pt,outer sep=0}}
\tikzset{dir/.style={postaction={decorate,decoration={markings,
    mark=at position .8 with {\arrow[scale=1.3]{<}}}}}}
\tikzset{rdir/.style={postaction={decorate,decoration={markings,
    mark=at position .8 with {\arrow[scale=1.3]{>}}}}}}
\tikzset{edir/.style={postaction={decorate,decoration={markings,
    mark=at position .2 with {\arrow[scale=1.3]{<}}}}}}

\noindent {\Large \bf 
\thetitle}\\
{\bf \Small with an Appendix written jointly with Laurent C\^ot\'e and Justin Hilburn}
\bigskip\\
{\bf Benjamin Gammage} \\ Department of Mathematics, University of California, Berkeley \\
{\bf Michael McBreen} \\ Department of Mathematics, Chinese University of Hong Kong \\
{\bf Ben Webster}\\
Department of Pure Mathematics, University of Waterloo \& \\
Perimeter Institute for Theoretical Physics
\bigskip\\
{\small
\begin{quote}
\noindent {\em Abstract.}
In this paper, we prove a homological mirror symmetry equivalence for pairs of multiplicative hypertoric varieties, and we calculate monodromy autoequivalences of these categories by promoting our result to an equivalence of perverse schobers.  We prove our equivalence by matching holomorphic Lagrangian skeleta, on the A-model side, with noncommutative resolutions on the B-model side.  The hyperk\"ahler geometry of these spaces provides each category with a natural t-structure, which helps clarify SYZ duality in a hyperk\"ahler context. Our results are a prototype for mirror symmetry statements relating pairs of K-theoretic Coulomb branches.
\end{quote}
}
\bigskip

\listoffixmes

\section{Introduction}
\label{sec:introduction}
In this paper, a sequel to \cite{McBW}, we realize a homological mirror symmetry equivalence for multiplicative hypertoric varieties. 

Multiplicative hypertoric varieties are
variants of the more familiar toric hyperk\"ahler, or ``additive hypertoric,'' varieties defined in \cite{Goto,BD,Ko00}. 
Both multiplicative and additive hypertoric varieties appear as the simplest examples of, and hence an excellent testing ground for, a class of holomorphic symplectic spaces arising from supersymmetric gauge theory and of great interest in geometric representation theory.
 Additive hypertoric varieties appear as Coulomb branches of 3d $\mathcal{N}=4$ gauge theories, as first constructed in \cite{BFN}, with abelian gauge group. Their multiplicative cousins appear as Seiberg-Witten systems governing 4d $\mathcal{N}=2$ theories, and after compactification of the 4-dimensional theory on a circle they appear in the K-theoretic Coulomb branch construction from \cite{BFN} (see also \cite{teleman2018role,FT} for further descriptions of these spaces).

We should emphasize that while the appearances of quantum field theory above are in dimensions above 2, in this paper, we only consider ``mirror symmetry'' in the sense
of the duality of 2d $\mathcal{N}=(2,2)$ theories familiar to mathematicians from, for example, the work of Kontsevich \cite{kontsevich1995homological}.  It would be interesting to understand our results from the perspective of 3- or 4-dimensional constructions in field theory.

\subsection{SYZ mirror symmetry}
From the perspective of mirror symmetry,
the salient feature of
multiplicative Coulomb branches is the presence of a Lagrangian torus fibration: indeed, to a first approximation, these spaces are holomorphic symplectic manifolds admitting the structure of an integrable system whose generic fiber is a  holomorphic Lagrangian torus; by \cite[Theorem 2]{teleman2018role},  it is Poisson birational to $T\times T^\vee$, which is the spectrum of the K-theory of the affine Grassmannian. (See also \cite[\S 3]{teleman2018role} for a more detailed discussion of the integrable system structure.)
The mirror, obtained by ``dualizing the torus fibration'' \emph{\`a la} \cite{SYZ} (a procedure which must be corrected in general for singular fibers), will be a space with the same structure. 
For each such space $\mht$, it is expected that the symplectic geometry of $\mht,$ or at least those aspects visible to the Fukaya category of $\mht,$ admits a description in terms of the algebraic category of coherent sheaves on the mirror space $\mht^\vee.$

The simplest multiplicative hypertoric variety is the affine variety
\[\TAo := \C^2 \setminus \{\sz\sw +1 =0\}.\]
As discussed in \cite[Section 5.1]{Auroux07}, this space admits a fibration by 2-tori with a single nodal torus fiber, and it is self-dual in the sense of the SYZ mirror construction of \cite{AAK}.
The space $\TAo$ is holomorphic symplectic, and the $S^1$ action $e^{i\theta}\cdot (\sz,\sw) = (e^{i\theta}\sz,e^{-i\theta}\sw)$ is quasi-hyperhamiltonian ({\em i.e.,} quasi-Hamiltonian with respect to the holomorphic symplectic form and Hamiltonian with respect to the real K\"ahler form -- see Section \ref{subsec:notation} for our conventions on Hamiltonian and quasi-Hamiltonian torus actions), so that it has a complex group-valued moment map and a real moment map, jointly valued in $\C^\times\times \R$. As described in Section \ref{sec:mht}, this can be lifted to a hyperhamiltonian ({\em i.e.}, Hamiltonian for all three K\"ahler forms) action, with moment map valued in $\R^3 = \C\times \R$, on the universal cover of $\TAo$.

All multiplicative hypertoric varieties are given as complex quasi-Hamiltonian reductions of $\TAno:=\left( \TAo \right)^n$ by a subtorus $T_{\R}$ of $(S^1)^n.$ To ensure that this reduction is smooth for generic choices of parameters, we require that this subtorus is {\bf unimodular}: if $J\subset \{1,\dots, n\}$ is a subset such that the projection map $T_{\R}\to (S^1)^J$ is an isogeny (a finite cover), it is an isomorphism.  
Hence, a multiplicative hypertoric variety $\mht$ is determined by two pieces of
data: \begin{enumerate}
    \item a unimodular subtorus $T\subset (\C^\times)^n$ complexifiying $T_{\R}$; and 
    \item a hyperhamiltonian
reduction parameter, which we can split into a complex moment map parameter $\parone\in T^\vee$ and a GIT stability parameter $
\GIT\in i\mathfrak{t}_\R^\vee.$
\end{enumerate}  
We will introduce an additional parameter $\Bfield$ valued in the compact torus $T^\vee_\R,$ which physicists might call a (periodicized) B-field. 
As we explain in Appendix \ref{sec:kirwan}, the
parameter $\Bfield$ determines a class in $H^2(\mht;\R/\Z)$ by the Kirwan map $H^2(BT; \R/\Z)\to H^2(\mht;\R/\Z)$, which we think of as the reduction mod $\Z$ of the imaginary part of a complexified K\"ahler form on $\mht$. 

We can combine $\GIT$ and $\Bfield$ into an element $\partwo = \Bfield \cdot \exp(\GIT)$ of $T^{\vee}$. As we'll see below, we expect the parameters $\parone$ and $\partwo$ to be exchanged under mirror symmetry.

Consider a generic pair  $(\parone,\partwo)\in (T^\vee)^2$ with the unique factorization  $\partwo=\Bfield\cdot \exp(\GIT)$ for $\Bfield\in T^\vee_\R$ 
and $\GIT\in i\ft_\R^\vee$. (Warning: in our convention, $i\ft_\R$ is the Lie algebra of the split real torus of $T,$ since we have privileged the compact real form with the notation $\ft_\R.$)
Then we will write $\mht_{(\parone,\partwo)}$ for the multiplicative hypertoric variety associated as above to the pair of parameters $(\parone,\partwo)$.
The variety $\mht_{(\tone,\tone)}$ is singular; the varieties $\mht_{(\parone,\tone)}$ and $\mht_{(\tone,\partwo)}$ are a smoothing and a resolution, respectively.

 If $(\parone,\GIT)=(\tone,0)$ but $\Bfield$ is generic, the underlying variety $\mht_{(\parone,\partwo)}$ is singular but we will define in Section \ref{sec:Bside} a noncommutative resolution (in the sense of \cite{vdB04}) depending on the choice of B-field $\Bfield$.

The diffeomorphism type of the underlying manifold $\mht_{(\parone,\partwo)}$ does not depend on $(\parone,\partwo)$, so long as they are suitably generic, but its isomorphism type as an algebraic variety depends on $\parone$ (and not $\partwo$), while its complexified symplectic form depends only on $\partwo$ (with the real part of the symplectic form depending only on $\GIT$).  

As mentioned above (and as we will recall in Appendix \ref{sec:syz-appendix}), the space $\TAo$ is shown in \cite{Auroux07,AAK} to be equal to its SYZ dual space, defined as a moduli space of Lagrangian torus objects in the Fukaya category $\Fuk(\TAo),$ and the same is true for the $n$-fold product $\TAno$. Moreover, this space has two structures which are related by mirror symmetry: namely, a complex group-valued moment map $f:\TAno\to (\C^\times)^n,$ and a Hamiltonian action of $(S^1)^n.$ Standard conjectures in mirror symmetry (see, for instance, \cite{Teleman14,TO1}) predict that mirror symmetry exchanges the operations of imposing an equation $\{f=a\}$ and taking a real Hamiltonian reduction. It is therefore natural to expect that the space $\mht_{(\parone,\partwo)},$ which is obtained by performing both operations, should be mirror to another space $\mht_{(\partwo',\parone')},$ where the pairs $\parone,\parone'$ and $\partwo,\partwo'$ are related by an (a priori complicated and unknown) mirror map. 

We conjecture that, unlike in the traditional setting of mirror symmetry for real symplectic manifolds, where the mirror map is often transcendental, in our setting the mirror map is actually the {\em identity}:
\begin{conjecture}\label{conj:syz-intro}
    The manifolds $\mht_{(\parone,\partwo)}$ and $\mht_{(\partwo,\parone)}$ are an SYZ mirror pair in the sense of \cite{AAK}.
\end{conjecture}
We refer to Appendix \ref{sec:syz-appendix} for further details on SYZ mirror symmetry. In addition, we speculate in Section \ref{subsec:relation} about the relation of Conjecture \ref{conj:syz-intro} to the SYZ mirror symmetry result proven in \cite{LZhypertoric}, which relates a multiplicative hypertoric variety equipped with superpotential to an additive hypertoric variety.

The homological mirror symmetry statement we prove in this paper will be in the setting
where the B-side variety $\mht_{(\parone,\partwo)}$ has
$\parone=\tone$, and the A-side has $\partwo=\tone.$ (This implies, in particular, that the A-side variety is affine with exact symplectic form.) Moreover, we will focus on the case where the nontrivial parameter lives inside the compact real torus $T^\vee_\R,$ which will allow us to take advantage of the holomorphic symplectic geometry of these varieties.
%
\begin{remark}
In the setting $\parone=1$ in which we prove homological mirror symmetry in this paper, it is possible to derive a more direct SYZ-type relation between these two varieties, representing the B-side variety as a moduli of torus-like objects in the Fukaya category of the A-side variety, and we discuss this relation in Section \ref{subsec:SYZ2}. However, the statement is complicated by the presence of a B-field term, which makes our B-side space not a variety but a noncommutative resolution.
\end{remark}


\subsection{Fukaya categories from Lagrangian skeleta}
As we shall see, $\mhta$ carries a natural Liouville structure. Computation of the Fukaya category of Liouville manifolds has recently become much more tractable, thanks to work of Ganatra, Pardon, and Shende on Fukaya categories of Liouville sectors \cite{GPS1,GPS2,GPS3}, following a conjecture by Kontsevich \cite{SGoHA} and subsequent work by many other mathematicians on constructible-sheaf approaches to mirror symmetry. 

In this approach, the key object is the {\bf skeleton} $\LL$, defined as the locus of points which do not flow off to infinity under the Liouville flow. Following \cite{GPS2}, we say that a Liouville manifold is {\em weakly Weinstein} if $\LL$ is mostly isotropic and admits homological cocores.\footnote{As the name suggests, any Weinstein manifold is weakly Weinstein. Remark \ref{rem:weak} touches on the role of Weinstein structures in our context.} It will ensure good behavior of the Fukaya category.

Locally, near a point on $\LL$ where it looks like a conic Lagrangian in a cotangent bundle, one can compute a category of microlocal sheaves along $\LL,$ as defined in \cite{KS94}. 
When the manifold $X$ admits a stable normal polarization --- a section over the Lagrangian Grassmannian of its stabilized symplectic normal bundle --- these are the stalks of a cosheaf of categories on $\LL.$

\begin{theorem}[\cite{Sh-hprinciple}]
There is a cosheaf of dg categories $\mu\Sh_\LL^c$ on the skeleton $\LL$ of a stably polarized weakly Weinstein manifold $X$ whose costalks may be computed locally in terms of microlocal sheaves on $\LL$.
\end{theorem}

Moreover, the results of \cite{NS20}, combined with those of \cite{GPS2,GPS3}, are sufficient to produce a comparison between the above cosheaf of categories and the Fukaya category.

\begin{theorem}[{\cite[Theorem 1.4]{GPS3}}]\label{thm:gps-main-intro}
For $X$ a stably polarized weakly Weinstein manifold with Lagrangian skeleton $\LL,$ there is an equivalence
$$
\Fuk(X) \cong \mu\Sh^c_\LL(\LL)
$$
between the wrapped Fukaya category of $X$ and the global sections of the cosheaf $\mu\Sh^c_\LL.$
\end{theorem}

To apply these results in our case, we first define a Liouville structure on $\mht_{(\parone,\tone)}$ and calculate its corresponding skeleton:
\begin{proposition}[Proposition~\ref{prop:mht-skel}, Proposition~\ref{prop:toricskel} \& Lemma \ref{lem:weaklyweinstein}]\label{prop:intro-skel}
	Let $\parone\in T^\vee$ be contained in the compact real torus $T_\R^\vee.$ Then there exists a weakly Weinstein Liouville structure on $\mht_{(\parone,\tone)}$ with associated Lagrangian skeleton $\LL_\parone$ a union of toric varieties $\mathfrak{X}_i$, whose moment polytopes are chambers of a certain toric hyperplane arrangement $\Btor_\parone$.
\end{proposition}


It may seem strange to impose the requirement $\parone\in T^\vee_\R$. One way to frame this choice is via the language of stability conditions:

\begin{conjecture}\label{conj:stabconds}
    The parameter $\parone$ determines a stability condition (in particular a t-structure) on the Fukaya category $\Fuk(\mhta),$ and for $\parone\in T^\vee_\R,$ this t-structure is a microlocal perverse t-structure described below.
\end{conjecture}
In Section \ref{sec:mon}, we give some evidence for Conjecture \ref{conj:stabconds} by describing wall-crossing equivalences relating the microlocal perverse t-structures associated to generic parameters $\parone\in T^\vee_\R$ when the parameter crosses a real wall in the compact torus $T^\vee_\R.$

A union of toric varieties as described in Proposition \ref{prop:intro-skel} actually exists as a holomorphic subvariety of $\mhta$ in a certain complex structure (defined only on an open subset of $\mhta$), called the {\bf Dolbeault complex structure}. We may treat $\LL_\parone$ not just as an ordinary Lagrangian but as a holomorphic Lagrangian variety. (Recall that a holomorphic Lagrangian (or ``(B,A,A)'') brane in a hyperk\"ahler manifold is a subspace which is Lagrangian with respect to K\"ahler forms $\omega_J$ and $\omega_K,$ and holomorphic with respect to the complex structure $I$.)

Having a holomorphic Lagrangian skeleton rigidifies many local calculations and helps to simplify the determination of the Fukaya category. As we explain in Section~\ref{subsec:microlocal} and see in detail in Section~\ref{subsec:perv-calc}, the costalks of $\mu\Sh_{\LL_\parone}^c$ in this case can be coherently equipped with a t-structure, which looks like the perverse t-structure on the category of constructible sheaves. In other words, the global Fukaya category of $\mhta$ can be computed by gluing together categories of perverse sheaves (constructible with respect to toric stratifications) on toric varieties.

These local categories of perverse sheaves can be described by purely linear-algebraic data, 
and the global category obtained by gluing these together can be described by combining this data into 
a quiver $\cQ_\parone$ together with relations in its path algebra.  
This quiver has one vertex for each chamber of $\Btor_\parone$ (with self-loops for the monodromies of the open torus of $\mathfrak{X}_i$), a pair of opposite edges for each facet shared by a pair of chambers, and the relations that the composition of one of these edge pairs is equal to monodromy around the corresponding toric divisor.
\begin{itheorem}\label{ith:A}
For $\parone\in T_\R^\vee$ generic, there is an equivalence of dg categories
$$
\mu\Sh_{\LL_\parone}^c(\LL_\parone)\cong \cQ_\parone\dgperf
$$
between a category of microlocal sheaves on the Lagrangian skeleton $\LL_\parone$ and the dg category of modules for the quiver with relations $\cQ_\parone$ defined above (or described in more detail in Definitions \ref{def:perv-tor} and \ref{def:abstract-quiver}).
Combining this with Theorem~\ref{thm:gps-main-intro}, we get an equivalence 
$$
\Fuk(\mht_{(\parone,\tone)})\cong \cQ_\parone\dgperf
$$
with the wrapped Fukaya category of $\mhta.$
\end{itheorem}

\begin{remark}
The space $\mhta$ is a Stein manifold and thus admits a natural family of Weinstein structures \cite{Eli-Ciel}. However, we will endow $\mhta$ with a distinguished Liouville vector field which does not---to our knowledge---arise in this manner. Its construction will rely on the hyperk\"ahler geometry of $\mhta$. We expect that the resulting Fukaya categories are equivalent, but we do attempt to prove this.
\end{remark}

\mtodo{I shuffled the Weinstein vs weakly-Weinstein discussion some more, and added the above remark.}

\begin{remark}
The assumption that $T_\R\subset (S^1)^n$ is unimodular implies that the components of $\LL_\parone$ are toric varieties rather than toric orbifolds/DM stacks, since no theory of microlocal sheaves on orbifolds currently exists in the literature. However, it should not be difficult to define such a theory by descent (using functoriality of $\mu\Sh$ under contactomorphisms). As we discuss at the end of Section \ref{subsec:perv-calc}, given such a theory, the first equivalence of Theorem \ref{ith:A} holds even without the unimodularity assumption.
It is expected that the category computed in this way still models the Fukaya category of $\mhta,$ thought of as an orbifold or smooth DM stack, but the foundational work necessary to define equivariant Fukaya categories does not yet exist, so there is not yet any version of Theorem \ref{thm:gps-main-intro} in this case.
\end{remark}

\subsection{Homological mirror symmetry}\label{sec:ncrintro}
On the B-side, we need to compute the category of coherent sheaves $\Cohdg(\mht_{(1,\parone)}).$
If we chose $\parone\in T^\vee$ to be a generic parameter in the split real form of $T^\vee,$ then the variety $\mht_{1, \parone}$ would be a resolution of the singular space $\mht_{(\tone,\tone)}$. However, to match the prescriptions of SYZ mirror symmetry, 
we would like to choose the parameter $\parone$ inside the compact real torus $T^\vee_\R$ instead.
This ``purely imaginary GIT parameter'' does not specify a resolution $\mht_{(1,\parone)}$ of $\mht_{(1,1)}$ in the traditional sense. Rather, for each generic choice of $\parone\in T^\vee_\R,$
we will define in Section \ref{sec:construction-tilting} a sheaf of algebras on $\mht_{(\tone,\tone)}$ which can be understood as a noncommutative crepant resolution in the sense of \cite{vdB04}. By definition, we will understand the category $\Coh(\mht_{(1,\parone)})$ as the category of perfect modules for this noncommutative resolution.

This notation is justified by the fact that the noncommutative resolution is the endomorphism algebra of a tilting bundle on any smooth resolution $\mht_{(1,\parone')}$ for a generic $\parone'$, which gives an equivalence to the derived category of coherent sheaves on this resolution.  However, this equivalence is not canonical, so we prefer to think about the derived category of modules over the noncommutative resolution.  

We construct this resolution by comparison with a similar noncommutative resolution for the additive hypertoric variety in \cite{McBW}, which was independently constructed by Spenko and van den Bergh \cite{Spenko-van-den-Bergh}.  Related results for multiplicative hypertoric varieties were obtained by Cooney and Ganev \cite{Ganev18,Cooney16}, but we will not use these directly.  We can view this noncommutative resolution as arising from the approach of Kaledin and others \cite{KalDEQ}, using quantization in characteristic p, where $\parone$ now plays the r\^ole of a noncommutative moment map parameter for a quantum Hamiltonian reduction. The resulting noncommutative resolution is a special case of constructions of line operators in gauge theory, described mathematically through the extended BFN category of \cite{WebSD}. 

The categories $\Coh(\mht_{(1,\parone)})$ so constructed are all equivalent to each other (for generic $\parone$), in accord with the expectation that the topological B-model is not sensitive to the (complexified) K\"ahler parameter $\parone.$ In analogy with (in fact, mirror to) Conjecture \ref{conj:stabconds}, this behavior can be best explained in the language of stability conditions:

\begin{conjecture}
    The parameter $\parone\in T^\vee$ determines a stability condition (in particular a t-structure) on the dg category of coherent sheaves on any symplectic resolution of $\mht_{(1,1)},$ and for $\parone\in T^\vee_\R,$ this t-structure is the ``noncommutative crepant resolution'' t-structure we construct.
\end{conjecture}


The category $\Coh(\mht_{(1,\parone)})$ is explicitly described as a category of modules over a certain quiver with relations. This quiver with relations can be expressed in terms of the combinatorics of the hyperplane arrangement $\Btor$ in exactly the same way as the quiver describing the Fukaya category of $\mht_{(\parone,\tone)}$. By comparing these two descriptions, we reach our main theorem (Theorem~\ref{thm:main-thm}):
\begin{itheorem}\label{ith:B}
There is an equivalence of dg categories
\[
   \Fuk(\mht_{(\parone,\tone)})\cong \Cohdg(\mht_{(\tone,\parone)})
\]
between the Fukaya category of the affine MHT $\mht_{(\parone,\tone)}$ and the category of coherent sheaves on a crepant resolution of $\mht_{(1,1)}.$ 
For $\parone\in T^\vee_\R\subset T^\vee,$ this equivalence is induced from an equivalence of abelian categories, the respective hearts of the perverse t-structure and the noncommutative resolution t-structure determined by $\parone$.
\end{itheorem}
%
\subsection{Perverse schobers and monodromy}
The mirror symmetry equivalences proved in Theorem~\ref{thm:main-thm} live above the components of $T^\vee_\R$ where $\parone$ is generic. These components are separated from each other by walls of (real) codimension 1. However, inside the full moduli space $T^\vee$ of $\beta,$ these walls are of complex codimension 1, so it is natural to want to follow these mirror symmetry equivalences around the walls, to produce derived equivalences between the categories located at various choices of generic $\beta\in T^\vee_\R$ or in other words a local system of categories over the complement $T^\vee_{\operatorname{gen}}$ of the discriminant locus. 

In this paper, we prefer to study only those categories living above $T^\vee_\R,$ since this is where the Lagrangian skeleton and hence also the t-structure are most tractable. Luckily, there is a way to compute the aforementioned local system of categories using only categories living over $T^\vee_\R,$ if we understand the relation of the categories computed above to the ``singular Fukaya categories'' living above parameters $\parone$ in the discriminant locus of $T^\vee.$ This method uses the technology of perverse schobers \cite{KShyperplane,KSschobers,BKS} developed by Kapranov and Schechtman. Previous uses of perverse schobers to study families of varieties include \cite{HL-S,BKS,Don-Ku}.

Expectations for the B-side schober controlling families of resolved Coulomb branches (including additive hypertoric varieties) were sketched in \cite{WebcohI}; in Section~\ref{sec:mon}, we compute this schober explicitly in the case of multiplicative hypertoric varieties. We then define an A-side perverse schober from Fukaya-categorical data associated to the family $\mht_{(\parone,\tone)}$, and finally we prove that these two schobers agree (Theorem~\ref{thm:schobers}):
\begin{itheorem}
	The A-side perverse schober defined in Section~\ref{subsec:a-schober} is equivalent to the B-side perverse schober defined in \cite{WebcohI}, and both of them can be described explicitly in terms of representations of a certain quiver.
\end{itheorem}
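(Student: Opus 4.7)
The plan is to exhibit both schobers as representations of the same explicit quiver with relations, so that the equivalence reduces to a combinatorial identification. A perverse schober on $T^\vee$ with respect to the discriminant hyperplane arrangement consists, \emph{à la} Kapranov-Schechtman, of a category attached to each stratum together with adjoint pairs of functors at codimension-one adjacencies, subject to the relations characterizing perverse sheaves. The key input is that the arrangement of walls in $T^\vee_\R$ is, up to the combinatorial duality used throughout the paper, the same as the toric hyperplane arrangement $\Btor$ controlling the skeleton of $\mht_{(\parone,\tone)}$ and the noncommutative resolution of $\mht_{(\tone,\parone)}$. Thus chambers, walls, and higher codimension strata on each side index the same pieces of combinatorial data.

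First, I would recall from \cite{WebcohI} the B-side description: its stalk at a chamber is $\Cohdg(\mht_{(\tone,\parone)})$, and its wall-crossing functors are computed via the extended BFN category as the natural biadjoint pair between noncommutative resolutions for adjacent GIT parameters; these assemble into a schober whose local model at each wall is the spherical functor associated to a $\C^\times$-equivariant variation of GIT. Next, I would build the A-side schober from the family $\mht_{(\parone,\tone)}$ as follows: over each chamber, use Proposition~\ref{prop:intro-skel} to identify $\Fuk(\mht_{(\parone,\tone)})$ with $\cQ\mmod$; over a wall, the skeleton acquires a controlled degeneration (one of the toric orbifolds $\mathfrak{X}_i$ collapses), and the nearby/vanishing-cycle functors are produced, via Conjecture~\ref{conj:cosheaf} and the sheaf-theoretic model of \cite{GPS3}, as restriction/corestriction between the relevant constructible (equivalently microlocal perverse) sheaf categories. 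With the stalk equivalences of Theorem~\ref{thm:main-thm} in hand, the task is then to match these wall functors with their B-side counterparts, after which the gluing relations making both collections into a perverse schober are automatically the same, because in both cases they are computed purely from $\Btor$.

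The main obstacle is the last step: the A-side wall functor is produced by a degeneration of symplectic/skeleton geometry, while the B-side wall functor is produced by varying the noncommutative moment parameter $\Bfield$ through a resonant value. To prove they agree, I would factor each of them through the common quiver presentation: on the A-side, crossing a wall modifies $\cQ$ in a local neighborhood of one vertex (merging or contracting chambers sharing the relevant facet), and the wall functor is the corresponding restriction/induction pair between the two quiver module categories; on the B-side, the analogous local modification of the extended BFN quiver gives the same restriction/induction pair. Verifying this identification requires care with gradings and monodromy signs, which I would pin down by computing both monodromies explicitly around a single wall — a local, rank-one calculation reducing to the self-mirror case of $\TAo$ — and then invoking the full torus symmetry and the cosheaf property of Conjecture~\ref{conj:cosheaf} to propagate the match to all walls and to all higher-codimension strata.
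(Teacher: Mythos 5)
Your proposal is correct in broad outline and ends up in the same place as the paper (reduce both schobers to quiver presentations indexed by the combinatorics of the arrangement), but the route you sketch is noticeably heavier than what the paper actually does, and it leaves a genuine gap in the middle. You propose to build the A-side wall functors as nearby/vanishing cycles attached to a skeleton degeneration, and then to match them with B-side wall-crossing functors via a rank-one computation near a single wall plus torus symmetry. The paper's Definition~\ref{def:eqvt-aside-schober} avoids this entirely: the A-side schober is defined \emph{directly} in terms of the cosheaf of microlocal sheaves on the restriction $\scrL_C$ of a global ``schober skeleton'' to the star of each stratum $C$, with all transition functors given by restriction/corestriction of the cosheaf. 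The decisive technical move in the paper's proof of Theorem~\ref{thm:schobers} is to pass from $\sfam$ to its universal cover $\sfam_{sc}$ (the pullback along $\fd^\vee \to D^\vee$), where the schober skeleton is controlled by the \emph{coordinate} hyperplane arrangement $\scrH$ in $\mathfrak{d}^\vee_\R$. This makes $\mu\Sh^c_{\scrL_{sc}}(\scrL_{sc,C})$ literally equal to $\widetilde{\scrQ}_C\dgmod$, so that after taking $\fg^\vee_\Z$- and $T$-invariants one gets $A_C\dgmod$ on the nose, and the restriction functors are the obvious ones coming from inclusions of hyperplane arrangements $\scrH_C\subset\scrH_{C'}$. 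The identification with the B-side data is then immediate by comparing to Definition~\ref{def:quotientschobquiver}; no local monodromy computation or propagation argument is needed.

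The gap in your argument is at the matching step: you want to identify an A-side nearby/vanishing cycle pair with the B-side bimodule functor ${}_{C'}T_{C}\Lotimes-$, but you have not said how you would show the former is corepresented by the same quiver bimodule, nor how the local rank-one computation would control transition functors at \emph{higher}-codimension strata, which the schober definition also requires (items (3) and (4) of Definition~\ref{def:schober}). The paper sidesteps both issues because its A-side data are already phrased as modules over the quiver subalgebras $A_C$ for \emph{every} stratum $C$, not just chambers and walls, so the coherence conditions are inherited from the combinatorics of $\Bper$ (and verified once and for all in the additive setting in \cite{WebcohI}, then transported via the formal comparison of Theorem~\ref{thm:complete-iso}). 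If you want to pursue your route, the missing lemma is precisely a statement that the nearby/vanishing cycle functor attached to the degeneration of the schober skeleton over a wall is corepresented by $e_+A_Ce_-$; once you prove that, your argument is a legitimate alternative, but it is not obviously simpler than the paper's uniformization trick.
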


\subsection{Relation with part I}\label{subsec:relation}
The part I of this series \cite{McBW} concerned mirror symmetry for additive hypertoric varieties. The SYZ geometry of this situation was described in \cite{LZhypertoric}:
\begin{theorem}[{\cite{LZhypertoric}}]\label{thm:syz-additive}
Let $\aht$ be an additive hypertoric variety. Then $\aht$ is SYZ mirror\footnote{The notion of SYZ mirror used here is more precise than that in \cite{AAK} but in particular satisfies the definitions of SYZ mirror from \cite{Auroux07,AAK}.}
to an LG model $(\mht^\vee,W:\mht\to \C)$ whose underlying space $\mht^\vee$ is a multiplicative hypertoric variety. In particular, the complement in $\aht$ of a divisor $\divisor$ is SYZ mirror to the multiplicative hypertoric variety $\mht^\vee.$
\end{theorem}

In this paper, we explain that the multiplicative hypertoric variety $\mht^\vee$ is mirror to another multiplicative hypertoric variety $\mht.$ The combination of Theorem~\ref{thm:syz-additive} and Conjecture \ref{conj:syz-intro} suggests the following conjecture:
\begin{conjecture} \label{conj:additivetomult}
    Let $\mht$ be a multiplicative hypertoric variety. Then there exists an additive hypertoric variety $\aht,$ a divisor $\divisor\subset \aht$, and an isomorphism $\mht \cong \aht\setminus \divisor.$
\end{conjecture}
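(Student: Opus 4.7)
The plan is to combine the SYZ duality established in the present paper (Ansatz~\ref{ansatz:syz}) with the mirror symmetry result of \cite{LZhypertoric} (quoted above as Theorem~1.3) to produce the desired isomorphism. Given $\mht = \mht_{(\parone,\partwo)}$, its SYZ mirror by Ansatz~\ref{ansatz:syz} is the multiplicative hypertoric $\mht^\vee := \mht_{(\partwo,\parone)}$. Theorem~1.3 of \cite{LZhypertoric}, applied in reverse, should produce an additive hypertoric $\aht$ together with a divisor $\divisor\subset \aht$ such that the LG model $(\aht, W_\divisor)$, whose critical locus is $\divisor$, has SYZ mirror $\mht^\vee$. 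Since $\mht$ is simultaneously the SYZ mirror of $\mht^\vee$, the variety $\aht\setminus \divisor$ and $\mht$ are doubly SYZ-dual, strongly suggesting an isomorphism; the remaining task is to upgrade this heuristic to an honest isomorphism of holomorphic symplectic varieties.

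For a direct construction, the natural starting point is the observation that $\TAo = T^*\C\setminus\{\sz\sw+1=0\}$ already exhibits the basic building block as an additive hypertoric complement. One would then hope that $\mht$, defined as a hyperhamiltonian reduction of $(\TAo)^n \subset (T^*\C)^n = \C^{2n}$, is the complement of the image of $\bigcup_i \{\sz_i\sw_i+1=0\}$ inside the additive hypertoric obtained by reducing $\C^{2n}$ by the same torus $T\subset (\C^\times)^n$. The fundamental obstruction is that the multiplicative moment map $\mu_\times(\sz,\sw)=\pi_\times((1+\sz_i\sw_i)_i)\in T^\vee$ and the additive moment map $\mu_+(\sz,\sw)=\pi((\sz_i\sw_i)_i)\in \fg^*$ do not agree on $(\TAo)^n$: the kernel of $\pi_\times$ is the multiplicative subtorus $K=\exp(\mathfrak{k})$, not the linear subspace $\mathfrak{k}$, and the fibers $\mu_\times^{-1}(\parone)$ and $\mu_+^{-1}(\xi)$ agree only to first order near $\sz_i\sw_i=0$. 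Consequently, the naive open-embedding $\mht \hookrightarrow \aht$ does not exist for general $T$.

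To bypass this, I expect the correct $\aht$ will instead be an additive hypertoric for an \emph{enlarged} torus $T' \subset (\C^\times)^{n+k}$ acting on $\C^{2(n+k)}$, where the additional factors carry the combinatorial data needed to express the exponential character constraint as a polynomial moment-map condition; the divisor $\divisor$ will then include both the image of $\bigcup_i \{\sz_i\sw_i+1=0\}$ and a component coming from degeneracies of the auxiliary variables. The main work of the proof will be to identify this enlargement explicitly and verify the resulting isomorphism on coordinate rings, reducing via the quotient construction to an explicit identification on the pre-reduction spaces. The hardest step, I expect, will be reconciling the multiplicative and additive moment-map equations as polynomial identities, using the open embedding $\C^\times\hookrightarrow \C$ in the auxiliary coordinates to match hyperhamiltonian quotient structures; the B-field parameter $\Bfield$ (which in Section~\ref{sec:ncrintro} controls the noncommutative resolution on $\mht_{(\tone,\tone)}$) should provide the correct guide, as it ought to correspond on the additive side to the residue at, or monodromy around, $\divisor$.
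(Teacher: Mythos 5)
This statement is left as a \emph{conjecture} in the paper, not a theorem: immediately after stating it, the authors write ``Although we do not prove Conjecture~\ref{conj:additivetomult}, we are able to construct directly the desired equivalence on formal completions; this is Theorem~\ref{thm:complete-iso}.'' So there is no ``paper's own proof'' to compare against; the nearest thing is Theorem~\ref{thm:complete-iso}, which shows that the formal neighborhoods $\widehat{\mht}_h$ and $\widehat{\aht}_{\log h}$ of moment-map fibers are isomorphic, via the explicit coordinate change $\sx_i\mapsto\sz_i$, $\sy_i\mapsto\sw_i\gamma_i$ with $\gamma_i=\log(1+\sz_i\sw_i)/(\sz_i\sw_i)$. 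That argument is inherently formal-local: the power series defining $\gamma_i$ and its inverse $\delta_i$ do not converge globally, which is precisely why the conjecture remains open.

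Your proposal does not close this gap. The first paragraph is a heuristic, not a proof: both this paper (Remark~\ref{rem:syzansatz}) and the statement of Ansatz~\ref{ansatz:syz} itself are explicit that ``SYZ mirror pair'' here is not a well-defined enough notion to support an argument of the form ``two things mirror to the same thing are isomorphic,'' especially in the presence of B-fields; the authors view the ansatz as a \emph{consequence} of homological mirror symmetry, not an input to algebro-geometric identifications. Your second paragraph correctly diagnoses the real obstruction --- the fibers $\mmm_{\C^\times}^{-1}(\parone)$ and $\mu_\C^{-1}(\xi)$ of the multiplicative and additive moment maps do not coincide globally, only infinitesimally near $\sz_i\sw_i=0$ --- which is exactly the phenomenon Theorem~\ref{thm:complete-iso} sidesteps by passing to formal completions. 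But your third paragraph, where the actual content would have to live, only gestures at an ``enlarged torus'' $T'\subset(\C^\times)^{n+k}$ without specifying $T'$, the extra coordinates, the polynomial encoding of the exponential constraint, or the identification of coordinate rings. As written, this is a proposal for where to look rather than a construction, so the conjecture remains unproven by your argument just as it does in the paper.
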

More precisely, the pair $(\aht,\divisor)$ should be as in Theorem \ref{thm:syz-additive}; we refer to Section \ref{sec:syz-appendix} for more detailed discussion.

\begin{remark}
Unlike the remainder of the results described in this paper, which are expected to hold in some form for multiplicative Coulomb branches more generally, the embedding $\mht\to \aht$ given by the above conjecture is special to hypertoric varieties and is not expected to exist in general. The best relation we can hope for between general additive and multiplicative varieties is the formal comparison isomorphism described in Theorem~\ref{thm:complete-iso}. Using the description of additive/multiplicative Coulomb branches as Spec of the homology/K-theory of a space, this map can be understood as a manifestation of the Chern character map from K-theory to homology. (See the proof of \cite[Proposition 3.23]{BFNplus} for another application of this philosophy.)
\end{remark}

The above conjecture suggests a concise explanation of the relationship between the results of this paper and those of \cite{McBW}.

%
Fix $\parone\in T^\vee$, which as usual we factor as
\[
\parone=\Bfield'\cdot \exp(\GIT').
\]
As explained earlier, from the choice of $\beta$ we can define $\mht=\mht_{(1,\parone)}$ as a (possibly noncommutative, if $\GIT'\neq 0$) resolution of $\mht_{(1,1)}.$
Similarly, we can consider the additive hypertoric variety $\aht=\aht_{(0,\parone)}$ defined as the hyperk\"ahler reduction of $T^*\C^n$ with complex moment map parameter 0, real moment map parameter $\GIT'$ and B-field $\Bfield'$.
(Once again, this means that if $\GIT'$ is zero but $\Bfield'$ is nonzero, we think of $\aht_{(0,\parone)}$ as a noncommutative resolution of $\aht_{(0,0)}$.)
Write $\Cohdg(\aht)_0$ for the category of
coherent sheaves on $\aht$ which are set-theoretically supported on the zero fiber of the moment map to $\mathfrak{d}^\vee$ (or equivalently, scheme-theoretically supported on the completion of the zero fiber inside $\aht)$. This is one of the categories of coherent sheaves studied in \cite{McBW}.

Since the zero moment map fiber does not intersect the divisor $\divisor$ described in \cite{LZhypertoric},
this category is equivalent to $\Cohdg(\aht \setminus \divisor)_0$ and hence equivalent to a category
$\Cohdg(\mht)_1$ of coherent sheaves on a completion of $\mht$ on the fiber over $1\in D^\vee$.

Although we do not prove Conjecture \ref{conj:additivetomult}, we are able to directly construct the desired equivalence on formal completions; this is Theorem \ref{thm:complete-iso} below. Thus, for a general value of the parameter $\parone\in T^\vee,$ we obtain an isomorphism
\begin{equation}
    \label{eq:coh-match}
    \Cohdg({\mht}_{(\tone,\parone)})_1\cong \Cohdg({\aht}_{(0,\parone)})_0,  
\end{equation}
where the parameter $\parone = \Bfield'\cdot\exp(\GIT')$ specifies multiplicative and additive hypertoric varieties as described above.
We highlight two special cases of \eqref{eq:coh-match}. When $\Bfield'=\tone$ but $\GIT'$ is generic, the varieties $\aht$ and $\mht$ are ``purely commutative'' resolutions of the singular affine additive/multiplicative hypertoric varieties, and \eqref{eq:coh-match} is an equivalence of categories of coherent sheaves. However, when $\GIT'=0,$ then \eqref{eq:coh-match} is an equivalence of module categories for the \emph{noncommutative resolutions} $\aht$ and $\mht.$
In fact, this theorem is a key technical tool and motivating observation for the noncommutative resolution we construct.  

Under the homological mirror symmetry
equivalence of Theorem~\ref{thm:main-thm}, we see that the passage from $\Cohdg(\mht)$ to $\Cohdg(\mht)_1$ is mirrored by passing from the category $\Fuk(\mht^\vee)$ to the category $\Fuk(\mht^\vee)^{\operatorname{uni}}$ where we require the monodromies on each component of $\LL$ to be unipotent.

A different incarnation of this A-brane category was produced in \cite{McBW} by studying a category
$\DQ$ of DQ-modules supported on the union of toric
varieties $\LL$ with unipotent monodromies on the components of $\LL.$
In order to specialize the category $\DQ$ at a particular quantization parameter $h=1,$
the authors of \cite{McBW} use a grading on $\DQ,$ mirror to the conic $\C^\times$-action on $\aht,$ coming from the
Hodge theory of DQ-modules.
(It would be interesting to understand better how
this sort of $\C^{\times}$-action interacts with mirror symmetry in general.)

By \cite[Prop. 4.35]{McBW}, the category $\DQ$ is equivalent to the derived category of  representations which are continuous in the discrete topology of a topological algebra $\widehat{H}_{\C}^{\parone}$ (see Definition \ref{def:hats}).  On the other hand, by Theorem \ref{ith:A}, the Fukaya category $\Fuk(\mht^\vee)^{\operatorname{uni}}$ is the equivalent category for an algebra $\widehat{A}_{\parone}$ obtained by completion of the algebra with relations appearing in this theorem.  In Proposition \ref{prop:completed isomorphism}, we explicitly construct an isomorphism $\widehat{H}_{\C}^{\parone}\cong \widehat{A}_{\parone}$, which shows that we have an equivalence of categories $\Fuk(\mht^\vee)^{\operatorname{uni}}\cong \DQ$.  On simples, this functor sends the quantization of the structure sheaf of a Lagrangian to the constant local system on the same Lagrangian. Thus, the resulting equivalence behaves like a solutions functor, sending a DQ-module to the constructible sheaf on each component obtained by considering it as a D-module on the component, and applying the Riemann-Hilbert correspondence. At the moment, we have not constructed this functor in a local geometric way, though this would be an interesting research direction.



To produce a full statement of homological mirror symmetry for the
additive hypertoric variety $\aht,$ we would have to take into account
the superpotential $W$. In the language of Liouville sectors and
skeleta, this would entail adding to the Lagrangian skeleton $\LL$
discussed above a noncompact piece asymptotic to the fiber of
$W$. 

\subsection{Notation and conventions}\label{subsec:notation}
The basic combinatorial datum in this paper is a short exact sequence $1 \to T \to D \to G \to 1$ of complex tori. We write $G_\R$ (and similarly for $D$ and $T$) for the compact real torus inside $G$; $G^\vee$ for the dual torus, containing compact torus $G^\vee_\R$; and we write $\mathfrak{g},\mathfrak{g}_\R,\mathfrak{g}^\vee,\mathfrak{g}_\R^\vee$ for their respective Lie algebras; hence we also write $i  \mathfrak{g}_\R,i \mathfrak{g}_\R^\vee$
for the Lie algebras of the split real tori in $G,G^\vee.$
(Note that since we write $G_\R$ for the compact real form of $G$ instead of the split, our conventions on which part of the torus is ``real'' and which ``imaginary'' are reversed from \cite{Teleman14}.)

The pair of parameters $(\parone,\partwo)$ on which the definition of multiplicative hypertoric varieties depends lives in $T^\vee\times T^\vee.$ We will sometimes find it useful to factor
\[
\partwo=\Bfield\cdot\exp(\GIT),
\]
where $\Bfield\in T^\vee_\R$ and $\GIT\in i\ft^\vee_\R.$ For instance, if $T^\vee$ is identified with $\C^\times,$ then $\Bfield = e^{2\pi i\Arg(\partwo)},$ and $\GIT=\log|\partwo|.$

We will also write
\[
\mathfrak{g}_\Hbb:= \mathfrak{g}_\R\otimes_\R\R^3 = \mathfrak{g}\oplus \mathfrak{g}_\R
\]
for the ``quaternionified Lie algebra'' of $G,$ and $\mathfrak{g}_\mathbb{Z}^\vee := X^\bullet(G)$ for the character lattice of $G$.

In this paper, we use both dg categories and abelian categories; we will always be explicit about which are which. All limits, colimits, sheaves, \emph{etc.} of categories are always taken in the appropriate homotopical sense. Here we make clear our notation for various flavors of categories we consider:
\begin{itemize}
    \item For $X$ an algebraic variety, we write $\Coh(X)$ for the abelian category of coherent sheaves on $X,$ and $\Cohdg(X)$ for the dg-category of complexes in this category.
    \item For $A$ a (possibly dg) algebra, we write $A\dgmod$ for the dg category of complexes of $A$-modules. In the case where $A$ is an underived (\emph{i.e.}, concentrated in degree 0) algebra, we write $A\mmod$ for the abelian category of $A$-modules. Similarly, we write $A\perf$ and $A\dgperf$ for the respective abelian and dg categories of perfect $A$-modules.
    \item For $M$ a Weinstein manifold or sector, we write $\Fuk(M)$ for (an idempotent-complete, dg model of) the Fukaya category of $M$ as defined in \cite{GPS1}.
    \item For $X$ a manifold, we write $\Sh(X)$ for the dg category of possibly infinite-rank complexes of sheaves on $X$ with constructible cohomology. If $X$ is a complex manifold, we write $\Perv(X)$ for the abelian category of possibly infinite-rank perverse sheaves on $X$.
    \item In general, for $\cC$ a dg category equipped with a t-structure, we write $\cC^\heartsuit$ for the abelian category which is its heart.
\end{itemize}

We also study various classes of closely-related spaces, whose notation we list here:
\begin{itemize}
    \item The letter $\aht$ always denotes a toric hyperk\"ahler variety, which we refer to in this paper as an ``additive hypertoric variety,'' to distinguish it from its multiplicative analogue.
    \item Multiplicative hypertoric varieties will be denoted by $\mht.$ We will sometimes call this a ``Betti MHT'' to distinguish it from its Dolbeault version. We write $\widetilde{\mht}$ for the $\mathfrak{g}^\vee_\Z$ cover of $\mht$ determined by pulling back the complex group-valued moment map $\mht\to G^\vee$ along the exponential map $\fg^\vee\to G^\vee.$
    \item The ``Dolbeault version'' of a multiplicative hypertoric variety, which is a hyperk\"ahler rotation of (an open retract of) $\mht$, is denoted by $\Dolb$.
\end{itemize}

Finally, we discuss some nonstandard or possibly unfamiliar usages in symplectic geometry.
\begin{itemize}
    \item Let $X$ be a symplectic manifold with an action of a compact torus $T_\R.$ We say that this action is {\bf quasihamiltonian} if there is a map $X\to T_\R^\vee$ such that the pullback $X\times_{T_\R^\vee}\mathfrak{t}_\R^\vee\to\mathfrak{t}_\R^\vee$ along the exponential map $\mathfrak{t}_\R^\vee\to T_\R^\vee$ is a moment map for the $T_\R$-action on the pullback. 
    The map $X\to T_\R^\vee$ is the {\bf group-valued moment map} associated to this quasihamiltonian action.
    (We do not consider quasi-Hamiltonian actions of non-abelian groups, whose definition is more involved and for which we refer readers to \cite{AMM98}.)
    \item Let $X$ be a space equipped with a K\"ahler form $\omega_J$ and a holomorphic symplectic form $\Omega_J = \omega_K+i\omega_I.$ We say that an action of a torus is {\bf hyperhamiltonian} if it is Hamiltonian for all three forms $\omega_J,\omega_K,\omega_I$ and {\bf quasihyperhamiltonian} if it is Hamiltonian for two of them and quasihamiltonian for the third.
    \item The space $\mht$ we study in this paper is equipped with a quasihyperhamiltonian action of a torus $G_\R,$ for which we denote the $\omega_J$ moment map by $\mmm_\R:\mht\to \fg^\vee_\R,$ and we combine the other two moment maps into a map $\mmm_{\C^\times}:\mht\to G^\vee.$
    \item In the above situation, we will sometimes want instead to combine the two additive moment maps to a map to the complex Lie algebra $\fg^\vee.$ To emphasize that this map is compatible with an unusual complex structure on $\mht,$ we denote it $\mmm_{I,\C}:\mht\to \fg^\vee.$
    \item Let $T_\R$ be a torus with a Hamiltonian action on a K\"ahler manifold $X.$ Traditionally, a symplectic reduction $X/\!\!/_\GIT T_\R$ is specified by a parameter $\GIT\in \ft^\vee_\R,$ which determines the symplectic form on the reduction. However, to match up mirror complex-structure and K\"ahler moduli, we would actually like to specify a {\em complexified} K\"ahler form, understood as a K\"ahler form plus an ``imaginary-valued'' 2-form, on such a Hamiltonian reduction, which requires a choice of parameter $\partwo\in T^\vee.$ We will often write $\partwo=\Bfield\cdot \exp(\GIT),$ so that it consists of the data of the traditional moment-map (or GIT) parameter $\GIT$ together with a $T^\vee_\R$-valued parameter $\Bfield.$ This latter parameter determines a class in $H^2(X;\R/\Z)$ by the Kirwan map, which we call the {\bf B-field}. It is the reduction modulo $\Z$ of the imaginary part of a complexified K\"ahler form on the Hamiltonian reduction $X/\!\!/_\GIT T_\R.$ (We choose to remember this class only modulo $\Z$ because the B-field contributes to Floer-theory calculations via its exponential, which is $\Z$-periodic.) We write $X/\!\!/_\partwo T_\R$ for the quotient $X/\!\!/_\GIT T_\R$ equipped with this complexified K\"ahler form.
    For more details of this construction, see Appendix \ref{sec:kirwan}.
\end{itemize}

\subsection{Acknowledgements}
The authors are grateful to Davide Gaiotto and Andy Neitzke for explanations about the Ooguri-Vafa metric, and to Daniel Pomerleano for discussions about SYZ mirror symmetry. B.G. would also like to thank Ben Wormleighton for many helpful discussions about preprojective algebras and Denis Auroux for advice about K\"ahler geometry. M.M. would like to thank the hospitality of Kavli IPMU.

B.G. was supported by an NSF Graduate Research Fellowship and by the Berkeley RTG on Number Theory and Arithmetic Geometry (DMS-1646385).
B.W. was supported by NSERC under a Discovery Grant.

M.M. was supported by an RGC Early Career Scheme grant (project number : 24307121).

All the authors wish to thank Perimeter Institute for Theoretical Physics, where this work was substantially completed during B.G.'s term as a visiting student.  
Research at Perimeter Institute is supported in part by the Government of Canada through the Department of Innovation, Science and Economic Development Canada and by the Province of Ontario through the Ministry of Colleges and Universities.

\section{Multiplicative hypertoric varieties}\label{sec:mht}
In this section, we describe the construction of hypertoric, or toric hyperk\"ahler, varieties and their multiplicative versions. The most basic hypertoric variety is the space $T^*\C^n$, which is holomorphic symplectic and in fact hyperk\"ahler, which is obvious from its identification with $\mathbb{H}^n.$ All other additive hypertoric varieties will be hyperhamiltonian reductions
of this space.

Hence we begin with the data of a split $k$-dimensional algebraic torus $T$ over $\C$ and a faithful linear action
of $T$ on the affine space  $\ACn$, which we may assume is diagonal in the
usual basis.  As discussed in the introduction, we assume that $T$ is unimodular: \begin{definition}
    For each subset $J\subset \{1,\dots, n\}$, we have a coordinate subtorus $(S^1)^J$, and a projection $\pi_J\colon T\to (S^1)^J$.  We call $T$ {\bf unimodular} if whenever $\pi_J$ is an isogeny, it is an isomorphism.  
\end{definition}
Furthermore, we assume that no coordinate subtorus lies in the image of $T$.
Letting $D\cong \mathbb{G}_m^n$ be the group of diagonal  matrices in this basis, we have a short exact sequence of tori
\begin{equation} \label{eq:basictoriseq}
1\longrightarrow T\longrightarrow D\longrightarrow G\longrightarrow 1.
\end{equation}
%



The action of $D$ on $\AC^n$ induces a hyperhamiltonian action of $D_\R$ on $T^*\AC^n,$ on which an element $d\in D$ acts on $(\vec{x},\vec{y})\in \AC^n$ by $(\vec{x},\vec{y})\mapsto (d\cdot \vec{x},d^{-1}\cdot \vec{y}).$ We factor the hyperk\"ahler moment map $\mu_{\Hbb}:T^*\AC^n\to \mathfrak{d}^*\otimes\R^3$ into algebraic and real parts as
\begin{equation}\label{eq:hmoment}
(\mu_\C,\mu_\R):T^*\AC^n\longrightarrow\mathfrak{d}^\vee\oplus \mathfrak{d}_\R^\vee,\qquad 
(z_i,w_i)_{i=1}^n\longmapsto\left((z_iw_i),(|z_i|^2-|w_i|^2)\right)_{i=1}^n.
\end{equation}

By restricting along the inclusion $T\hookrightarrow D,$ we also have an action of $T$ and hence a hyperk\"ahler moment map to $\ft^\vee \times \ft^\vee_\R.$ This moment map is obtained by composing the moment map \eqref{eq:hmoment} with the the maps obtained by dualizing the inclusion $\mathfrak{t}\to\mathfrak{d}.$ From now on, we will use $\mu_\Hbb = (\mu_\C,\mu_\R)$ to denote this moment map.

Fix $(\parone,\GIT)\in \mathfrak{t}^\vee \oplus \mathfrak{t}^\vee_\R = \mathfrak{t}^\vee_\Hbb.$
\begin{definition}
The {\bf hypertoric variety $\mathfrak{Y}$} associated to the data of the short exact sequence \eqref{eq:basictoriseq} and the choice of $(\parone,\GIT)$ is the hyperhamiltonian reduction $T^*\AC^n/\!\!/\!\!/_{(\parone,\GIT)}T_\R$ of $T^*\AC^n$ by $T_{\R}$ with parameters $(\parone,\GIT).$ In other words, $\mathfrak{Y}$ is the GIT quotient of $\mu_\C^{-1}(\parone)$ by $T$ at character $\GIT.$
\end{definition}

There are many natural ways (for instance, passing through multiplicative preprojective algebras \cite{CBS,Yamakawa}, K-theoretic Coulomb branches \cite{BFN,FT}, or moduli of microlocal sheaves on nodal curves \cite{BK16} ) to produce a ``multiplicative'' or ``loop-group'' version of the above construction. The construction we describe below produces a multiplicative hypertoric variety $\mht.$  We refer to the complex structure that arises naturally in our construction as the {\bf Betti complex structure}. (Later, we will introduce a {\bf Dolbeault hypertoric variety}, which contains a holomorphic Lagrangian subvariety isomorphic (as a Lagrangian) to our preferred Lagrangian skeleton of $\mht$; the expected relation of the Dolbeault variety to $\mht$ via hyperk\"ahler rotation serves as inspiration for our constructions, although none of our results logically depends on it.)

\subsection{The Betti MHT}

Our construction now proceeds exactly as before, except that we replace the space $T^*\AC^n$ with its multiplicative version
  \[\TAno:=\{(\sz_1,\dots,\sz_n,\sw_1,\dots,\sw_n) \mid \sz_i\sw_i\neq -1\}\subset T^*\ACn,\]
   equipped with a holomorphic symplectic form $\Omega_{K} := \sum_{i=1}^n \frac{ d \sz_i d \sw_i}{1 + \sz_i \sw_i},$ as well as a Liouville structure whose definition we postpone to Proposition \ref{prop:mht-skel}. 
  We will write $\omega_I$ and $\omega_J$ for the real and imaginary parts of $\Omega_{K},$ respectively.

  As in the additive case, $D$ acts on $\TAno,$ although with respect to $\Omega_{K}$ the action is now quasi-Hamiltonian in the sense of \cite{AMM98} (see also \cite{Saf16} for a modern perspective): to the $D$-action is associated a moment map $\mmm_{\C^\times}$ valued in the \emph{group} $D^\vee$.
This $D$-action is quasi-Hamiltonian for the holomorphic symplectic form $\Omega_{IJ},$ and the $D_{\R}$ action is Hamiltonian for the third symplectic form $\omega_K,$ so that we have a quasi-hyperhamiltonian moment map
 \begin{equation*}\label{eq:hmoment2}
(\mmm_{\C^\times},\mmm_\R):\TAno\longrightarrow D^\vee\times \mathfrak{d}_\R^\vee,\qquad 
(\sz_i,\sw_i)_{i=1}^n\longmapsto\left((\sz_i\sw_i+1),(|\sz_i|^2-|\sw_i|^2)\right)_{i=1}^n.
\end{equation*}
From now on we use $\mmm_{\C^\times},\mmm_R$ to denote the composition of the above maps with the pullbacks $D^\vee\to T^\vee, \mathfrak{d}^\vee_\R\to\mathfrak{t}^\vee_\R.$
  
  \begin{definition}\label{def:mht}
	  The {\bf multiplicative hypertoric variety} $\mht_{(\parone,\partwo)}$ associated to the data of the short exact sequence \eqref{eq:basictoriseq} and the respective complex-structure and moment-map parameters $(\parone,\partwo)\in T^\vee\times T^\vee,$ where we write $\partwo=\Bfield\cdot \exp(\GIT),$ 
	  is the Hamiltonian reduction \[
	  \mht_{(\parone,\partwo)}:=\mmm_{\C^\times}^{-1}(\parone)/\!\!/_{\partwo}T_\R,
	  \]as defined in \ref{def:bfield-reduction}: its underlying symplectic manifold is $\mmm_{\C^\times}^{-1}(\parone)/\!\!/_\GIT T_\R,$
	  with periodicized B-field pulled back from $\Bfield\in H^2(BT_\R;\R/\Z)$ along the Kirwan map. 
	  Where the parameters $(\parone,\partwo)$ are either already clear or else irrelevant, we will sometimes refer to this variety just as $\mht.$ 
%
  \end{definition}
  
  As in the case of additive hypertoric varieties, the variety associated to parameters $(\parone,\GIT)=(\tone,0)$ is singular and affine; the first parameter controls a smoothing, and the second a resolution, of the singular variety $\mht_{(\tone,\tone)}$. In particular, for suitably generic $\parone$, the variety $\mht_{(\parone,\tone)}$ is smooth and affine. In this paper, we will be interested in the symplectic geometry of this affine variety;
  we will see that the mirror to this space is a (possibly noncommutative) resolution of $\mht_{(\tone,\tone)}.$
  
We can be more explicit about the space of generic parameters $\parone$ for which the variety $\mht_{(\parone,\GIT)}$ is smooth:
\begin{definition}\label{def:circuits}
    The {\bf support} of an element $x \in \fd_\Z = \Z^n$ is the subset of $\{1,...,n\}$ consisting of coordinates with nonzero coefficient.

     A {\bf signed circuit} of $\fd_\Z$ is a primitive element in the image of $\ft_\Z$ with minimal support. Note that if $\circuit$ is a signed circuit, then so is $-\circuit$. A {\bf circuit} is the support of a signed circuit.
\end{definition}
\begin{definition} \label{def:paronegeneric}
     The orthogonal complement to a circuit $\circuit \in \ft_\Z$ determines a codimension one subtorus $T^{\vee}_{\circuit} \subset T^{\vee}$.  The parameter $\parone \in T^{\vee}$ is said to be {\bf generic} if it does not lie in $T^{\vee}_{\circuit}$ for any circuit.
\end{definition}
\begin{lemma}
Let $\parone\in T^\vee$ be generic. Then the variety $\mht_{(\parone,0)}$ is smooth.
\end{lemma}
\begin{proof}
First, we show that the action of $T$ on $\mmm_{\C}^{-1}(\parone)$ is free.  Assume not; in this case, the Lie algebra of the stabilizer of some point $(\sz,\sw)$ contains a signed circuit $\circuit \in \ft_\Z$.  This means that $\sz_i=\sw_i=0$ for all $i$ in the corresponding circuit and in turn implies that $\mmm_{\C}(\sz,\sw)\in T^{\vee}_{\circuit}$, contradicting the assumption of genericity.  

Thus, the action of $T$ on $\mmm_{\C}^{-1}(\parone)$ is free.  This implies that all the orbits of $T$ are closed, since if an orbit isn't closed, it will have an orbit of lower dimension in its closure, which is thus necessarily non-free.  Since all orbits are closed, $\mht_{(\parone,0)}$ is the set-theoretic quotient, and this is smooth since it has a free and proper action of $T$.  
\end{proof}

From now on, unless otherwise specified, we assume that $\parone$ is generic in this sense.
Note that if we restrict our parameter space to the compact torus $T^\vee_\R,$ the generic parameters form an open dense subset -- in fact, they are the complement of a toric hyperplane arrangement, which will be studied in more detail in Section \ref{sec:mon}.
  
We will need one further piece of structure on $\mht_{(\parone, \partwo)}$. The action of $D$ on $\TAno$ descends to an action on $\mht_{(\parone, \partwo)}$ factoring through the torus $G = D/T$. The latter has moment map 
\begin{equation} 
(\mmm_{\C^\times},\mmm_\R): \mht_{(\parone, \partwo)} \longrightarrow D^\vee\times \mathfrak{d}_\R^\vee
\end{equation}
obtained by restricting the $D$-moment map, and its image is the translate of $G^{\vee} \times \mathfrak{g}_\R^\vee$ given by the preimage of $(\parone, \delta)$ under the quotient map $D^\vee\times \mathfrak{d}_\R^\vee \to T^\vee\times \mathfrak{t}_\R^\vee.$ 

\section{Liouville structures and Dolbeault spaces} \label{subsec:dolbeault}

A {\bf Liouville structure} on a symplectic manifold $(M,\omega)$ is a primitive $\omega=d\lambda$ for the symplectic form, or equivalently a vector field $X$ on $M$ satisfying $\mathcal{L}_X\omega=\omega,$ so that the Liouville form is $\lambda=X\lrcorner\omega.$ We say that a Liouville manifold $(M,\omega,\lambda)$ is {\bf Weinstein} if it can be equipped in addition with an exhausting Morse-Bott function $f:M\to \R$ such that the Liouville vector field $X$ is gradient-like for $f.$ A standard reference for the theory of Liouville, Stein, and Weinstein manifolds is \cite{Eli-Ciel}.

\begin{definition}
    Let $(M, \omega = d\lambda)$ be a Liouville manifold with Liouville vector field $V=\omega^{-1}(\lambda),$ whose time-$t$ flow we denote by $\phi_t.$ The {\bf skeleton} of $X$ is the subset $\LL\subset M$ of points in $M$ which do not escape to infinity under this flow.
\end{definition}

Thanks to results of \cite{GPS3}, the Fukaya category of a Weinstein manifold may be calculated locally on its skeleton $\LL,$ so our task is to determine a skeleton for the multiplicative hypertoric variety $\mht_{(\parone,\tone)}.$ One natural way to define a Liouville structure is via a choice of Morse-Bott K\"ahler potential on the Stein manifold $\mht_{(\parone,\tone)}.$ However, this approach leads to some sensitive calculations in Morse-Bott theory. 

We resolve this issue by introducing Dolbeault spaces to our story. In addition to the complex structure so far, the smooth manifold underlying $\mht_{(\parone, 1)}$ admits another complex structure, which makes it into a complex analytic manifold $\Dolparone,$ the {\bf Dolbeault hypertoric space} (or {\bf Dolbeault space} for short). These spaces were first defined by Hausel and Proudfoot in an unpublished note, and have been recently studied in \cite{McBW} and \cite{DMS19}. We will briefly recall their construction below, referring to \cite[Section 4]{McBW} or \cite[Section 7]{DMS19} for more details.

The relevance of these spaces for us is that the most convenient Liouville form on the space $\mht_{(\parone, 1)}$ is naturally defined in terms of the geometry of the Dolbeault space $\Dolparone.$ This is analogous to the case of conic symplectic resolutions, where a Liouville structure on the deformation of a symplectic singularity may be defined by means of a $\C^\times$ action on the resolution (which is diffeomorphic to the deformation). As we will see, in our case we will make use of a $\C^\times$ action not on the Dolbeault space $\Dolparone$ itself, but rather on a related space, the algebraization of its universal cover.

The resulting Liouville structure will be weakly Weinstein rather than Weinstein. This is enough to apply the results of \cite{GPS3}.

\begin{warning} \label{warn:weakly}
    The above paragraphs mention two ways of producing a Liouville structure on the manifold $\mht_{(\parone,1)}$: either by using a Stein K\"ahler potential on the affine algebraic variety $\mht_{(\parone,1)}$, or by defining a Liouville structure on the diffeomorphic Dolbeault space $\Dolparone.$ We opt for the latter, and we expect that the two approaches are equivalent, but we do not prove this.
\end{warning}

We begin by introducing the basic building block $\fOV$ of the Dolbeault space, and constructing a Liouville domain $\fOV_\epsilon \subset \fOV$ in Lemma \ref{lem:basicliouvilledomain}. We then define more general Dolbeault spaces, and associate a Liouville domain $(\Dolparone)_\epsilon \subset \Dolparone$ to each in Lemma \ref{lem:generaldolbeaultLiouvilledomain}. Finally, we identify the Liouville completion of $(\Dolparone)_\epsilon$ with $\mht_{(\parone, 1)}$ in Proposition \ref{prop:mht-skel}.



Throughout this section, we take the parameter $\parone\in T^\vee$ to live inside the compact torus $T^\vee_\R.$

\subsection{Construction of the basic spaces \texorpdfstring{$\fOV, \tfOV$ and $\tfOValg$}{Z,Z~ and Z~alg}} \label{sec:basicdol-constr}
In this section we construct the analogue $\fOV$ of the basic Betti space $\TAo$. It is an elliptic fibration over the disk. We begin by defining an infinite type toric surface $\tfOValg$, before introducing $\fOV$ in Definition \ref{def:fOV} as a $\Z$ quotient of an open set of $\tfOValg$. This construction is in the spirit of Mumford's construction of degenerating abelian varieties in \cite{mumford1972analytic}; we learned of its relevance to hypertoric geometry from unpublished work of Hausel and Proudfoot. 




\begin{definition} Let $\tfOValg$ be the infinite type toric surface obtained by identifying $\C^2_n \setminus \{x_n = 0\}$ with $\C^2_{n+1} \setminus \{y_{n+1}=0\}$ via 
\[ y_{n+1} = x_{n}^{-1} \] and 
\[ x_n y_n = x_{n+1}y_{n+1}. \]
\end{definition}
We write $q : \tfOValg \to \C$ for the function which equals $x_ny_n$ on $\C^2_n$. It is a $\C^\times$ fibration over $\C$, degenerating over the origin to a union of $\Z$ copies of $\mathbb{P}^1$ glued end-to-end. 
\begin{remark} One may view $\tfOValg$ as the toric variety whose fan of cones in $\R^2$ is generated by $\{(n,1)\mid n\in \Z\}$.
\end{remark}

There is an action of $\C^{\times} \times \C^{\times}$ satisfying
\begin{equation} \label{eq:actionofbigtorus}  (z,w)^*x_n = w^{n+1} z x_n, \ \ \ (z,w)^* y_n = w^{-n} z^{-1} y_n. \end{equation}
We write $\C^{\times}_{z} = \{ (z,1) \}$ and $\C^{\times}_w = \{ (1,w) \}$ for the complementary subtori. The variety $\tfOValg$ carries an algebraic symplectic form $\Omega_I^{\operatorname{alg}}$ which restricts to $dx_n \wedge dy_n$ on $\C_n^2$. The subtorus $\C^{\times}_{z}$ preserves $\Omega_I^{\operatorname{alg}}$, with moment map $q : \tfOValg \to \C$. The subtorus $\C^{\times}_w$ scales both $\Omega_I^{\operatorname{alg}}$ and $q$ with weight one:
\begin{equation} \label{eq:algebraicdilation}
(z,w)^* \Omega_I^{\operatorname{alg}} = w\Omega_I^{\operatorname{alg}} \ \ \ (z,w)^*q = w q.
\end{equation}

We have an action of $\Z$ on $\tfOValg$, whose generator $\tau$ is characterized by $$\tau^*x_n = x_{n-1}, \tau^* y_n = y_{n-1}.$$ From \eqref{eq:actionofbigtorus}, we have the following relation between the $\Z$-action and the $\C^{\times}_z\times \C^{\times}_w$-action (expressed in terms of the pullback map on functions) :
\begin{equation} \label{eq:basictauanddilations}
\tau^* (z,w)^* = (wz, w)^* \tau^*.
\end{equation}

The $\Z$-action on $\tfOValg$ is discrete along the locus where $|q| < 1$. 
\begin{definition} \label{def:fOV}
    Let $\tfOV := \{ |q| < 1 \} \subset \tfOValg$. Let $\fOV := \tfOV / \Z$.
\end{definition}
Note that the action of $\C^{\times}_w$ does not preserve $\tfOV \subset \tfOValg$, since it scales $q$ with weight one.

The space $\fOV$ is an elliptic fibration over the disk with a nodal fiber 
\begin{equation} \label{eq:tatenodalfiber} \mathcal{L} := q^{-1}(0) \end{equation} over the origin. Fix a small $\epsilon > 0$ and let \[ \tfOV_{\epsilon} = \{ |q| \leq 1 - \epsilon \} \subset \tfOV. \] We similarly define $\fOV_{\epsilon} = \tfOV_{\epsilon} / \Z$. It is a manifold with boundary. It admits a hyperk\"ahler metric, first described by Ooguri-Vafa \cite{OV96} and studied at length in Section 3 of \cite{GrWi}. The construction proceeds via the Gibbons-Hawking ansatz. This starts with a four-manifold $X$ with a circle-action and an open embedding $X/S^1 \to \R^3$. and a harmonic function on the image with prescribed singularities at images of the $S^1$ fixed points. It returns a hyperk\"ahler metric on $X$, such that the map $X \to \R^3$ is the hyperk\"ahler moment map for the $S^1$ action. Starting from an elliptic fibration $X$ over a disk with a nodal fiber at the origin, Gross and Wilson show how to pick the harmonic function on (a subset of) $\R^3$ so as to ensure that one of the hyperk\"ahler complex structures coincides with the given complex structure on $X$. In general this may require shrinking the radius of the disk by an amount which depends on the periods of the fibration. In our setting, the radius can be taken arbitrarily close to $1$, and we choose $1- \epsilon$ to be slightly smaller than the maximum possible radius. None of our constructions will depend on the precise choice of $\epsilon$. 

For more details on the application of Gross and Wilson's results to our setting, we refer the reader to \cite[Section 7]{DMS19}. We summarise the properties of the resulting metric as follows. 

\begin{theorem} \label{thm:OVmetric}
The space $\tfOV_{\epsilon}$ admits a $\Z$-invariant (non-complete) hyperk\"ahler metric $g$. We denote the associated triple of complex structures $(I,J,K)$, and the triple of associated symplectic forms 
\[ \omega_I = g(I-,-), \omega_J = g(J-, -), \omega_K = g(K-,-). \]
We combine two of these into the holomorphic symplectic form
\[ \Omega_I := \omega_J + I \omega_K. \]
The complex structure $I$ (situated at $\infty$ on the twistor sphere) is the one obtained by restriction from $\tfOValg$. The algebraic form $\Omega_I^{\operatorname{alg}}$ restricts to $\Omega_I$.
\end{theorem}
\begin{theorem} \label{thm:momentmaps}
The action of the compact subtorus $S^1 \subset \C^{\times}_z$ on $\tfOV_{\epsilon}$ is hyperhamiltonian, with moment map $(\momi,\momj,\momk) : \tfOV_{\epsilon} \to \R \times \C$ satisfying 
\begin{equation} \label{eq:taudeck} \tau^* (\momi, \momj, \momk) = (\momi + 1, \momj, \momk) \end{equation}
where $\tau \in \Z$ is the generator. The map $q$ restricts to $\momj + I\momk : \tfOV_{\epsilon} \to \D$. 
\end{theorem}

In addition, the following result, comparing the Dolbeault space $\fOV_\epsilon$ to the Betti space $\TAo,$ can be found in \cite[Section 1.3.2]{li2019syz}:
\begin{theorem}
    In complex structure $J$, the space $\fOV_\epsilon$ admits a complex-analytic embedding into $\TAo.$
\end{theorem}


We see from \eqref{eq:taudeck} that the action of $S^1 \subset \C^{\times}_z$ on $\fOV_{\epsilon}$ is  quasi-hyperhamiltonian with moment map valued in $\C\times S^1$: in other words, the $S^1$-action is honestly Hamiltonian for the holomorphic symplectic form but quasi-Hamiltonian for the real K\"ahler form on $\fOV_{\epsilon}.$

\begin{definition} 
Let $\trihamvec$ be the hyperhamiltonian vector field $\trihamvec$ generated by the $S^1$-action on $\tfOV_{\epsilon}$.
\end{definition}
By definition,
\[ d\momi = \omega_I(\trihamvec, -), \ d\momj = \omega_J(\trihamvec, -), \ d\momk = \omega_K(\trihamvec, -). \ \] 


\subsection{Liouville vector fields in \texorpdfstring{$\tfOV_{\epsilon}$}{Z~epsilon}}
Let $\mathscr{Y}_z, \mathscr{Y}_w$ denote the vector fields on $\tfOValg$ generating the action of $S^1_z \subset \C^{\times}_z$ and $S^1_w \subset \C^{\times}_w$ respectively. Then $\mathscr{Y}_z$ restricts to the hyperhamiltonian vector field $\trihamvec$ on $\tfOV_{\epsilon}$, whereas $\mathscr{Y}_w$ restricts to a non-hyperhamiltonian vector field. 

\begin{definition}
Let $\tX := I \cdot \mathscr{Y}_w$. 
\end{definition}
This is the vector field on $\tfOValg$ generating the action of $\R^* \subset \C_w^*$. We will from now on view as a vector field on the open subset $\tfOV_{\epsilon}$. By \eqref{eq:algebraicdilation}, on we have
        \begin{equation} \label{eq:basicdilatingproperty} \Lieder_{\tX} (\omega_J + I \omega_K) = \omega_J + I \omega_K  \end{equation}
and
\begin{equation} \label{eq:dilatesthebasetoo} \Lieder_{\tX} (\momj + I \momk) = \momj + I \momk. \end{equation}

\begin{lemma} \label{lem:basicliouvilleproperty}
The vector field $\tX$ dilates the symplectic form $\omega_J$ and is outward-pointing along the boundary $q^{-1}(S^1_{1-\epsilon}).$
\end{lemma}
\begin{proof}
The first statement is \eqref{eq:basicdilatingproperty}. On the other hand, \eqref{eq:dilatesthebasetoo} shows that $\tX$ scales the function $q=\mu_J+I\mu_K$ and thus points outward along the level set $\{|q|=1-\epsilon\}.$
\end{proof}

We would like to obtain a Liouville domain by passing to $\fOV_\epsilon = \tfOV_\epsilon / \Z$.
Unfortunately, $\tX$ is not $\Z$-invariant: 
from \eqref{eq:basictauanddilations}, we have
\begin{equation} \label{eq:tXdeck} \tau^* \tX = \tX + I \cdot \trihamvec. \end{equation}
Hence $\tX$ does not descend to a Liouville vector field on $\fOV_{\epsilon}$. We will obtain a $\Z$-invariant Liouville vector field on $\tfOV_{\epsilon}$ by adding a Hamiltonian correction term to $\tX$. Given $f: \tfOV_{\epsilon} \to \R$, let $\hamvec_f$ denote the Hamiltonian vector field associated to the function $f$ and the form $\omega_J$. 
\begin{definition}
\begin{equation} \label{eq:hamcorrection} \liouvillevec := \tX + \hamvec_{\momi \momk }. \end{equation}
\end{definition}
\begin{remark} In \eqref{eq:hamcorrection}, the moment maps $\momi, \momk$ make an appearance, whereas the moment map $\momj$ does not. Had we chosen $\omega_K$ instead of $\omega_J$ as our real symplectic form, we would see the moment maps $\momi, \momj$ appear at this stage.
\end{remark}

To prove that this is $\Z$-invariant, we need to do a few preliminary calculations.
\begin{lemma} \label{lem:hamvecofmomiandmomk}
\[ \hamvec_{\momk} = -I \cdot \trihamvec \ \ {\text and } \ \ \hamvec_{\momi} = K \cdot \trihamvec.\]
\end{lemma}
\begin{proof}
\[ \iota_{I \cdot \trihamvec}(\omega_J) = g(JI \cdot \trihamvec, -) = g(- K \cdot \trihamvec, -) = - \iota_\trihamvec(\omega_K) = - d \momk. \]
This proves the first equality. The proof of the second equality is identical.
\end{proof}

\begin{lemma}
We have 
\begin{equation} \label{eq:hamvecofmomimomk} \hamvec_{\momi \momk} = - \momi I \cdot \trihamvec + \momk K \cdot \trihamvec. \end{equation}
\end{lemma}
\begin{proof}
The claim follows from Lemma \ref{lem:hamvecofmomiandmomk} and the general equality $\hamvec_{fg} = f \hamvec_{g} + g \hamvec_{f}$. 
\end{proof}

\begin{lemma}
$\liouvillevec$ is a $\Z$-invariant vector field on $\tfOV_{\epsilon}$.
\end{lemma}
\begin{proof}
We have $\tau^* \hamvec_{\momi \momk} = \hamvec_{(\momi+1)\momk} = \hamvec_{\momi \momk} + \hamvec_{\momk} = \hamvec_{\momi \momk} - I \cdot \trihamvec$. This cancels the term appearing in \eqref{eq:tXdeck}, leaving us with a $\Z$-invariant vector field. 
\end{proof}

\begin{lemma} \label{lem:Liouvilleproperty}
\begin{equation} \label{eq:Zliouville} \Lieder_\liouvillevec (\omega_J) = \omega_J. \end{equation}
\end{lemma}
\begin{proof}
Since $\liouvillevec$ differs from $\tX$ by a hamiltonian vector field, this follows from  \eqref{eq:basicdilatingproperty}.
\end{proof}

\subsection{Making \texorpdfstring{$\fOV_\epsilon$}{Z~epsilon} into a Liouville domain}
We now show that $\liouvillevec$ dilates the base $\D$ of the elliptic fibration, in the following sense. Recall that $\D$ is a disk with coordinates $\momj, \momk$. 
\begin{lemma} \label{lem:basedilationproperty}
$\liouvillevec(\momj) = \momj$ and $\liouvillevec(\momk) = \momk(1 + g(\trihamvec, \trihamvec))$.
\end{lemma}

\begin{proof}
We have $\tX( \momj) = \momj$ by \eqref{eq:basicdilatingproperty}, and  $\hamvec_{\momi \momk}(\momj) = \{ \momi \momk, \momj \} = -\trihamvec(\momi \momk) = 0$. Hence $\liouvillevec(\momj) = \momj$. 

We have $\tX( \momk) = \momk$ by \eqref{eq:basicdilatingproperty}, and $\hamvec_{\momi \momk}(\momk) = \{ \momi \momk , \momk \}$. Applying the Leibnitz rule, this equals
\begin{equation} \momk \{ \momi, \momk \} = \momk \omega_J(-I \cdot \trihamvec, K \cdot \trihamvec) = \momk g(\trihamvec,\trihamvec). \end{equation}
Hence $\liouvillevec(\momk) = \momk(1 + g(\trihamvec, \trihamvec))$.
\end{proof}

\begin{lemma} \label{lem:basicliouvilledomain}
The symplectic form $\omega_J$ and the vector field $\liouvillevec$ make $\fOV_{\epsilon}$ into a Liouville domain.
\end{lemma}
\begin{proof}
By Lemma \ref{lem:Liouvilleproperty}, $\liouvillevec$ dilates the symplectic form. On the other hand Lemma \ref{lem:basedilationproperty} shows that $\liouvillevec$ points outwards along the boundary.
\end{proof}

\begin{lemma} \label{lem:basicskeleton}
The skeleton of $\fOV_{\epsilon}$ is the zero fiber $(\momj + I\momk)^{-1}(0) \subset \fOV_{\epsilon}$.
\end{lemma}
\begin{proof}
By Lemma \ref{lem:basedilationproperty}, any point $x \in \fOV_\epsilon$ away from the zero fiber $\{ \momj = \momk = 0 \}$ will escape to the boundary under the Liouville flow. On the other hand, the same Lemma shows that the Liouville flow preserves $\{ \momj = \momk = 0 \}$. Since the latter space is compact, it must equal the non-escaping set. 
\end{proof}

\subsection{Construction and properties of the general Dolbeault space}
Recalling the exact sequence of tori \eqref{eq:basictoriseq}, we observe that $\fOV_{\epsilon}^n$ admits a quasi-hyperhamiltonian action of $T_\R.$ Since in this section we require $\parone\in T^\vee$ to live inside the compact torus $T^\vee_\R,$ it is sensible to use it as a group-valued parameter for the quasi-Hamiltonian part of this action. We will also fix a lift $\Log(\parone) \in \fg^{\vee}_\R$ of $\parone$.
%
\begin{definition}
	The \textbf{Dolbeault multiplicative hypertoric space} $(\Dolparone)_{\epsilon}$ associated to the exact sequence \eqref{eq:basictoriseq} and the data $\parone$ is the quasi-hyperhamiltonian reduction $\fOV_{\epsilon}^n/\!\!/\!\!/_{(\parone,1)} T_\R.$
	\label{defn:dolbeault-var}

\end{definition}

\begin{remark}
For a pair of general parameters $(\parone,\partwo)\in T^\vee\times T^\vee,$ the quasi-hyperhamiltonian reduction $\fOV_{\epsilon}^n/\!\!/\!\!/_{(\parone,\partwo)}T_\R$ is still reasonable to consider, where we take the compact part $\Arg(\partwo)$ of $\partwo$ as a B-field parameter and combine the noncompact parts $\Log(\parone),\Log(\partwo)$ into a $\ft^\vee$-valued parameter for the complex moment map. We will not consider such spaces in this paper.
\end{remark}
We similarly define the hyperhamiltonian reduction $(\tDolparone)_{\epsilon} := \tfOV_{\epsilon}^n/\!\!/\!\!/_{(\Log(\parone),1)} T_\R.$ The lattice $\fg^{\vee}_\Z$ acts by isometries, and $(\Dolparone)_{\epsilon} = (\tDolparone)_{\epsilon} / \fg^{\vee}_\Z$.

The compact subtorus $G_\R \subset G$ acts by hyperhamiltonian transformation on $(\tDolparone)_{\epsilon}$ and quasi-hyperhamiltonian transformations on $(\Dolparone)_{\epsilon}$, with a triplet of moment maps 
\[ (\momi, \momj, \momk) : (\tDolparone)_{\epsilon} \to \mathfrak{g}_\R^{\vee} \times \mathfrak{g}_{\C}^{\vee}. \]
descending to
\[ (\momi, \momj, \momk) : (\Dolparone)_{\epsilon} \to (\mathfrak{g}_\R^{\vee}/ \mathfrak{g}_\Z^{\vee}) \times \mathfrak{g}_{\C}^{\vee}. \]

As in the basic case, we will also need an algebraisation of $(\tDolparone)_{\epsilon}$, denoted $\tDolalgparone$. The latter space was studied in detail in \cite{groechenighypertoric}. Its key property is the  existence of the $\C^\times$-action in Lemma \ref{lem:scalingandlattice}. In fact, we will only use the derivative $\tX$ of this $\C^{\times}$-action, restricted to the open subset $(\tDolparone)_{\epsilon}$. The reader willing to take the existence of a vector field satisfying satisfying \eqref{eq:generaldilatingproperty}, \eqref{eq:generaldilatesthebasetoo} and \eqref{eq:tXdeckgeneral} on faith may skip ahead to Section \ref{sec:generalliouvillefield}.


To construct $\tDolalgparone$, we start with the action of $T \subset D$ on $(\tfOValg)^n$, with its holomorphic moment map 
\[ \mu_\C : (\tfOValg)^n \to \ft^{\vee}_\C. \] The moment map can be understood explicitly as follows. The action of $D$ on $(\tfOValg)^n$ preserves the fibration $(q_i)_{i=1}^n : (\tfOValg)^n \to \C^n = \fd^{\vee}_\C$, and the moment map is obtained by composing this fibration with the dual $\fd^{\vee}_\C \to \ft^{\vee}_\C$ of the inclusion $\ft_\C \to \fd_\C$. Its restriction to $\tfOV_{\epsilon}^n$ equals the complex part of the hyperk\"ahler moment map for $T_\R$.

The paper \cite{groechenighypertoric} explains how the choice of $\Log(\parone)$ determines a $\Hom(G, \C^{\times})$-invariant $T$-equivariant ample line bundle on $(\tfOValg)^n$. The ample line bundle determines a semistable locus $(\tfOValg)^n_{\Log(\parone)-ss}$ of the $T$-action on $(\tfOValg)^n$ in the sense of geometric invariant theory. 
\begin{definition} \label{def:algdolbspace}
$\tDolalgparone$ is the algebraic symplectic reduction 
\begin{equation} \label{eq:algebraicsymplecticreduction}   \mu_\C^{-1}(0)_{\Log(\parone)-ss} / T \subset (\tfOValg)^n_{\Log(\parone)-ss} / T. \end{equation}
\end{definition}

The residual action of $D$ on $\tDolalgparone$ factors through $D/T =G$. It carries an algebraic moment map $\pi : \tDolalgparone \to \mathfrak{g}_{\C}^{\vee}$, obtained by restricting the map $(q_i)_{i=1}^n$ to the moment fiber and descending to the quotient. The character lattice $\fg^{\vee}_\Z$ also acts on $\tDolalgparone$, and this action is a covering space action on the subset $\{ |q_i| < 1 \}$. As in the basic case, we have an open embedding of holomorphic symplectic manifolds
\begin{equation} \label{eq:embedding}
    (\tDolparone)_{\epsilon} \subset \tDolalgparone.
\end{equation}
Since the left-hand side is a hyperk\"ahler reduction and the right-hand side is a GIT quotient, this requires some explanation. We first consider the analytic open subset 
\[ \{ |q_i| < 1 - \epsilon \}_{\Log(\parone)-ss} / T \]
of the right-hand side of \eqref{eq:algebraicsymplecticreduction}. We would like to identify the latter with the symplectic reduction 
\[ \{ |q_i| < 1 - \epsilon  \} /\!\!/_{\Log(\parone))} T_\R = \mu_\R^{-1}(\Log(\parone)) / T_\R \]
with respect to $\omega_I$. If $(\tfOValg)^n$ were compact, this would be a consequence of the Kempf-Ness theorem. The latter boils down to the fact that a $T$ orbit in $(\tfOValg)^n$ is semistable exactly if it intersects the moment fiber $\mu_\R^{-1}(\Log(\parone))$ in a $T_\R$ orbit. In our setting, this follows directly from the explicit description in \cite[Lemma 17]{groechenighypertoric} of the semistable locus of $(\tfOValg)^n$ as a union of toric subvarieties of $(\tfOValg)^n$. Thus the inclusion 
\[ \mu_\R^{-1}(\Log(\parone)) \subset \{ |q_i| < 1 - \epsilon  \} \] descends to an isomorphism of complex manifolds

\begin{equation} \label{eq:KNidentification}
\mu_\R^{-1}(\Log(\parone)) / T_\R = \{ |q_i| < 1- \epsilon  \}_{\Log(\parone)-ss} / T.
\end{equation} 
 Intersecting both sides of \eqref{eq:KNidentification} with the zero fiber of the complex moment map $\mu_\C : \tfOV^n \to \ft^{\vee}_\C$, we obtain the embedding \ref{eq:embedding}. 

To sum up, we have the cartesian square 

\begin{equation} 
\xymatrix{
 (\tDolparone)_{\epsilon} \ar[d]^{\momj + I\momk} \ar[r] & \tDolb^{\operatorname{alg}}_\parone \ar[d]^{\pi} \\
  \mathbb{B}\ar[r] &\fg_\C^{\vee},
}
\end{equation}
where $\mathbb{B}$ is an analytic open neighborhood of the origin in $\fg_\C^{\vee}$ given by $\{ |q_i| < 1- \epsilon  \}$.

\begin{lemma} \label{lem:scalingandlattice}
The diagonal subtorus $\scale \subset (\C^{\times}_w)^n$ preserves the $T$-semistable locus \[\mu_\C^{-1}(0)_{\Log(\parone)-ss} \subset \mu_\C^{-1}(0),\] and thus acts on the quotient $\tDolalgparone$. This action of $\scale$ scales both $\Omega_I^{\operatorname{alg}}$ and the holomorphic moment map $\pi$ with weight one. 
\end{lemma}

This dilating action also plays a crucial role in the prequel to this paper: see \cite[Section 4.3]{McBW}. \label{prop:scalingaction} As in the basic case, $\scale$ does not preserve $(\tDolparone)_{\epsilon} \subset \tDolalgparone.$ 

\begin{lemma}
For any $\tau \in \fg^{\vee}_\Z,$ we have $\tau^* \momi = \momi + \tau$ as functions $(\tDolparone)_{\epsilon} \to \mathfrak{g}^{\vee}_\R$.
\end{lemma}

Consider the surjection $D = (\C^{\times})^n \to G$, (the real part of) its derivative $\R^n \to \fg_\R$ and the dual embedding $\fg^{\vee}_\R \to \R^n$.

We write $\langle - , - \rangle $ for the pull back of the standard inner product on $\R^n$ to $\fg^{\vee}_\R$. It defines an isomorphism $\fg^{\vee}_\R \to \fg_\R$, denoted $\tau \to \hat{\tau}$, such that $\tau(\hat{\sigma}) = \langle \tau, \sigma \rangle.$

\begin{lemma} \label{lem:tauanddilations}
For $\tau \in \fg^{\vee}_\Z$ and $(z, w) \in G \times \C^{\times}_w$ we have
\begin{equation} \label{eq:tauanddilations}
\tau^* (z,w)^* = (\hat{\tau}(w)z, w)^* \tau^*.
\end{equation}
where on the right-hand side we view $\hat{\tau}$ as a cocharacter of $G$. 
\end{lemma}
\begin{proof}
The action of $\tau \in \fg^{\vee}_\Z$ on $\tDolalgparone$ is defined by the action of its pullback $\tau' \in \Hom(D, \C^{\times})$ on $(\tfOValg)^n$, restricted to the zero fiber of the moment map. Applying \eqref{eq:basictauanddilations} to this lift, we obtain the result.
\end{proof}

\subsection{Making \texorpdfstring{$(\Dolparone)_{\epsilon}$}{D beta epsilon} into a Liouville domain} \label{sec:generalliouvillefield}
As in the basic case, let $\hamvec_f$ denote the Hamiltonian vector field generated by $f$ with respect to the form $\omega_J$. Given $\tau \in \fg^{\vee}_\R$, pairing with the various moment maps defines three functions $\langle \tau, \momi \rangle, \langle \tau, \momj \rangle, \langle \tau, \momk \rangle : \tDolb \to \R$.  

\begin{definition}
Let $\trihamvec_{\tau} := \hamvec_{\langle \tau, \momj \rangle}$. 
\end{definition}
It is the Hamiltonian vector field generating the $S^1$-action associated with the cocharacter $\hat{\tau}$ of $G_\R$.

\begin{definition}
Let $\tX$ be the vector field on $(\tDolparone)_{\epsilon}$ generating the action of $\R^* \subset \C^{\times}_w$.
\end{definition}

By Lemma \ref{lem:scalingandlattice} we have
\begin{equation} \label{eq:generaldilatingproperty} \Lieder_{\tX} (\omega_J + I \omega_K) = \omega_J + I \omega_K  \end{equation}
and
\begin{equation} \label{eq:generaldilatesthebasetoo} \Lieder_{\tX} (\momj + I \momk) = \momj + I \momk. \end{equation}
From \eqref{eq:tauanddilations} we deduce
\begin{proposition} \label{prop:liouvilleoncover}
\begin{equation} \label{eq:tXdeckgeneral} \tau^* \tX = \tX + I \cdot \trihamvec_\tau. \end{equation}
\end{proposition}



As in the base case, we will add a certain Hamiltonian vector field to $\tX$ to obtain something $\Z$-invariant. 

Fix an orthonormal basis $\{ \sigma_a \}_{a =1}^{\rk G}$ of $\fg^{\vee}_\R$ with respect to our inner product $\langle - , - \rangle$. We can expand $\momi = \sum_{a=1}^{\rk G} \momi^a \sigma_a$ and likewise for $\momj, \momk$. 
\begin{definition} 
 Define
\[ H := \sum_{a = 1}^{\rk G} \momi^a \momk^a. \]
\end{definition}
Note that $H$ depends non-trivially on the choice of orthonormal basis.
\begin{definition}
\[ \liouvillevec := \tX +  \hamvec_H \]
\end{definition}
To show that this is invariant under deck transformations $\tau \in \fg^{\vee}_\Z$, we make the following calculations.

\begin{lemma}
\[ \tau^* H = H + \langle \tau, \momk\rangle. \]
\end{lemma}
\begin{proof}
\[ \tau^* H = \sum_{a = 1}^{\rk G} \tau^* \momi^a \momk^a = H + \sum_{a = 1}^{\rk G} \langle \tau, \sigma_a \rangle \momk^a = H + \langle \tau, \momk\rangle. \]
In the last step, we used the orthonormality of the basis $\sigma_a$.
\end{proof}

The proof of Lemma \ref{lem:hamvecofmomiandmomk} extends immediately to give 
\begin{equation} \hamvec_{\langle \tau, \momk \rangle } = - I \cdot \trihamvec_{\tau}. \end{equation}

\begin{corollary}
\[ \tau^*\hamvec_H = \hamvec_H - I \cdot \trihamvec_\tau. \]
\end{corollary}

\begin{corollary} \label{cor:descendstodolb}
The vector field $\liouvillevec$ is invariant under the action of $\fg^{\vee}_\Z$ on $(\tDolparone)_{\epsilon}$, and descends to a vector field on $(\Dolparone)_{\epsilon}$ satisfying
\[ \Lieder_\liouvillevec \omega_J = \omega_J. \]
\end{corollary}
We now generalize Lemma \ref{lem:basedilationproperty}, and show that the vector field $\liouvillevec$ dilates the base $\mathbb{B}$ of the hypertoric Hitchin map $(\Dolparone)_{\epsilon} \to \mathbb{B}$. More precisely, we will show that its flow increases the functions
\[ \langle \momk, \momk \rangle = \sum_{a = 1}^{\rk G} (\momk^a)^2 \mbox {   \&   } \langle \momj, \momj \rangle = \sum_{a = 1}^{\rk G} (\momj^a)^2.\]

\begin{lemma} \label{lem:dolbbasedilatingproperty}
We have \begin{equation} \liouvillevec(\langle \momj, \momj \rangle) = 2 \langle \momj, \momj \rangle  \end{equation}
and
\begin{equation} \liouvillevec( \langle \momk, \momk \rangle ) = 2( \langle \momk, \momk \rangle + g(\mathscr{R}, \mathscr{R})) \end{equation}
where $\mathscr{R} = \sum_{a=1}^{\rk G} \momk^a \trihamvec^a$.
\end{lemma}
\begin{proof}

By \eqref{eq:generaldilatesthebasetoo}, we have $\tX( \momj^a) = \momj^a$ and $\tX(\momk^a) = \momk^a$ for each $1 \leq a \leq \rk G$. Thus
\[ \tX( \langle \momk, \momk \rangle) = \tX (\sum_{a = 1}^{\rk G} (\momk^a)^2) = 2(\sum_{a = 1}^{\rk G} (\momk^a)^2) = 2 \langle \momk, \momk \rangle \] 
and likewise
\[ \tX( \langle \momj, \momj \rangle ) = 2\langle \momj, \momj \rangle. \]
We have $\hamvec_H(\momj) = -\trihamvec(H) = 0$, which proves the first equality. On the other hand, we can expand  
\begin{align*} \label{eq:poissonbracket}  
\hamvec_H(\langle \momk, \momk \rangle) &  = \{ H, \sum_{a = 1}^{\rk G} (\momk^a)^2 \} = \sum_{a, b = 1}^{\rk G}  \momk^b \{ \momi^b, (\momk^a)^2 \} \\ & = 2\sum_{a, b = 1}^{\rk G}  \momk^a \momk^b \{ \momi^b, \momk^a \} = 2\sum_{a, b = 1}^{\rk G} \momk^a \momk^b g(\trihamvec^a, \trihamvec^b) 
 \\ & = 2g(\mathscr{R}, \mathscr{R}).  
\end{align*}
This proves the second equality.
\end{proof}


\begin{lemma} \label{lem:generaldolbeaultLiouvilledomain}
The symplectic form $\omega_J$ and the vector field $\liouvillevec$ make $(\Dolparone)_{\epsilon}$ into a Liouville domain.
\end{lemma}
\begin{proof}
Corollary \ref{cor:descendstodolb} gives the dilating property $\liouvillevec \omega_J = \omega_J$. Lemma \ref{lem:dolbbasedilatingproperty} shows that it is outwards pointing along the boundary.
\end{proof}

\begin{lemma} \label{lem:generalskeleton}
The skeleton of $\liouvillevec$ is the zero fiber $\pi^{-1}(0) = (\momj + I\momk)^{-1}(0) \subset (\Dolparone)_{\epsilon}$.
\end{lemma}
\begin{proof}
The vector field $\mathscr{R}$ vanishes along $\{ \momk^a = 0 \}$. As in the proof of Lemma \ref{lem:basicskeleton}, we conclude that the flow preserves the zero fiber and sends everything else off to the boundary. 
\end{proof}
\subsection{The central fiber} 
\label{sec:centralfiber}



The key structure we have used in defining the Liouville flow is the action of $\scale$ on $\tDolalgparone$. This action fixes setwise the central fiber $\tDolfiber:=\pi^{-1}(0)$, while flowing all other fibers outward. It should now be clear why it was necessary to work on $\tDolparone^{\operatorname{alg}}$ when defining this action: first, different fibers of the complex-valued moment map on $(\Dolparone)_{\epsilon}$ are nonequivalent abelian varieties, so there can be no $\C^\times$ action identifying them with each other; hence, it is necessary to pass to the universal cover $\tDolparone$, where fibers become copies of $(\C^\times)^k.$ And second, the $\C^\times$ action does not preserve the small neighborhood $\mathbb{B},$ so it is necessary to pass from there to the algebraic variety $\tDolalgparone$.

We will be very interested in the central fiber $\tDolfiber.$ It also lives inside of $(\tDolparone)_{\epsilon},$ and we will denote by $\Dolfiber$ its image in $(\Dolparone)_{\epsilon}$, in accord with the case where $\Dolb=\fOV_{\epsilon}$ is the Tate curve and $\Dolfiber$ is the nodal curve defined in \eqref{eq:tatenodalfiber}.
The components of $\tDolfiber$ are smooth complex Lagrangians: in fact, they are toric varieties intersecting cleanly along toric subvarieties. 
%
\begin{example}
	Let $\Dolb_\epsilon =\fOV_{\epsilon}$. Then the space $\tDolfiber$ is an infinite chain of projective lines, meeting nodally at 0 and $\infty.$ 
\end{example}

We can encode the data of these toric varieties and their intersections into a periodic hyperplane arrangement inside of $\mathfrak{g}^\vee_\R,$ defined in terms of the exact sequence \eqref{eq:basictoriseq} and the parameter $\parone\in T_\R^\vee.$
Let $G^{\vee,\parone}_\R$ denote the preimage of $\parone$ under the composition $T^{\vee}_\R \leftarrow D^\vee$; this is an affine subtorus modeled on $G^{\vee}_\R$. The intersection of $G^{\vee,\parone}_\R$ with the coordinate subtori in $D^\vee_\R$ form a toric arrangement in $G^{\vee,\parone}_\R\cong G^\vee_\R.$ We can pull this back via the cover $\mathfrak{d}^{\vee}_\R \to D^{\vee}_\R$ to obtain a periodic hyperplane arrangement on $\mathfrak{g}^{\vee, \beta}_\R$, the translate of $\mathfrak{g}^{\vee}_\R$ by (any choice of) $\log \beta$. 


\begin{definition}\label{def:per-arrangement}
We denote by $\Bper_\parone$ the periodic hyperplane arrangement in $\mathfrak{g}_\R^{\vee, \beta}$ defined as above, and we write $\Btor_\parone$ for the toric arrangement which is its image in $G^{\vee,\parone}_\R.$
\end{definition}

In other words, the chambers of the periodic hyperplane arrangement $\Bper_\parone$ are defined by \[
\Delta_{\Bx}=\{\Ba\in \fg^{\vee,\parone}_\R \mid x_i < a_i <
x_i+1\},
\]  
where $\Bx \in \Z^n \cong \mathfrak{d}^\vee_\Z.$ is a character of $D$.
\begin{proposition}[{\cite[Proposition 4.11]{McBW}}]\label{prop:toricskel}
The irreducible components of the variety $\tDolfiber$ are the toric varieties $\mathfrak{X}_\Bx$ associated to the polytopes $\Delta_\Bx$, for $\Delta_\Bx\neq 0$. The images of $\mathfrak{X}_\Bx$ in $\fg^{\vee}_\R$ under the $G_\R$-moment map are the chambers of the arrangement $\Bper_\parone$. They intersect cleanly along toric subvarieties indexed by the intersections $\Delta_\Bx \cap \Delta_\By$.
\end{proposition}
\begin{lemma} \label{lem:weinsteinnhds}
Each component $\mathfrak{X}_{\Bx}$ admits an open neighborhood $U_\Bx$ in $(\tDolparone)_{\epsilon}$ isomorphic to an open neighborhood of $\mathfrak{X}_{\Bx}$ in its cotangent bundle. The intersection of $\mathfrak{X}_{\By}$ with this neighborhood is identified with the conormal to a toric stratum.
\end{lemma}
\begin{proof}
First consider the basic case $\widetilde{\fOV}_\epsilon$. The components of $\widetilde{\Dolfiber}$ are given by rational curves $\mathbb{P}^1_k$ labeled by an integer $k \in \Z$.
In this case, we can take $U_k$ to be a standard neighborhood of $\mathbb{P}^1_k.$
For a general Dolbeault space $\widetilde{\fOV}^n/\!\!/\!\!/_{(\parone,1)}T_\R,$, the components $\mathfrak{X}_{\Bx}$ of $\widetilde{\Dolfiber}$ are symplectic reductions of components $\prod_{i=1}^n (\mathbb{P}^1)_{k_i} \subset \widetilde{\fOV}^n$. The desired neighborhood is defined by $\widetilde{\Dolb}_\Bx := \prod_{i=1}^n U_{k_i} /\!\!/\!\!/_{(\parone,1)}T_\R$. 
\end{proof}

\begin{remark}If we were to relax our assumption on the unimodularity of the subtorus $T_\R\subset (S^1)^n$, the clean intersections statement would need to be interpreted in the sense that there is an orbifold chart around the intersection with the toric subvarieties corresponding to coordinate subspace, with the finite group acting by diagonal matrices. As we will see, our microlocal-sheaf computation applies to this case as well.
\end{remark}

\begin{remark}We can now simplify our Definition~\ref{def:paronegeneric} of genericity for $\parone\in\ft^\vee_\R$: the parameter $\parone$ is generic if and only if the hyperplane arrangement $\Bper_\parone$ is simple, in the sense that any $k$ hyperplanes always intersect in codimension $k$.
\end{remark}

\subsection{Liouville structure and skeleton of \texorpdfstring{$\mht_{(\parone, \tone)}$}{U(beta,1)}.} \label{sec:liouvillestruct}

We are now ready to use the Liouville domain $(\Dolparone)_{\epsilon}$ to define a Liouville structure on the multiplicative hypertoric variety $\mht_{(\parone,1)}.$ It will be useful to recall the following singular Lagrangian torus fibration on $\mht_{(\parone,\tone)}.$

\begin{definition}\label{defn:syzmap}
Let $\mathfrak{m} \colon \mht_{(\parone,1)} \to \fg^{\vee}_\C$ be the map $\mathfrak{m} := \log | \mmm_{\C^\times}| + i \mmm_\R$ (generalizing the map $\mmm_{I,\C}$ defined in Example~\ref{ex:torusfib}). 
\end{definition}

In Example \ref{ex:torusfib}, we describe how the map $\mmm_{I,\C}$ on $\TAo$ gives a fibration of $\TAo$ by 2-tori, with one singular fiber, the nodal torus $\mmm_{I,\C}^{-1}(0).$ It is straightforward to identify the preimage of a disk $\mmm_{I,\C}^{-1}(\D)$, as a smooth manifold with a singular torus fibration, with the Tate curve $\fOV\to \D,$ and in fact, such an identification exists for general multiplicative hypertoric varieties:

\begin{proposition} [{\cite[Theorem 9.6]{DMS19}}] \label{NAHT}
There exists an open $G_\R$-equivariant embedding \begin{equation}
\label{eq:NAHC} (\Dolparone)_{\epsilon} \hookrightarrow \mht_{(\parone,1)} \end{equation} intertwining $\mmm_{I,\C}$ and $\mathfrak{m}$. 
\end{proposition}

\begin{proposition}\label{prop:mht-skel}
The completion of $(\Dolparone)_{\epsilon}$ is diffeomorphic to $\mht_{(\parone,1)},$ equipped with a Liouville structure whose skeleton is the central fiber $\mathcal{L}:=\mathfrak{m}^{-1}(0)$ of the map $\mathfrak{m}$ described in Definition \ref{defn:syzmap}.
\end{proposition}
\begin{proof}
The space $(\Dolparone)_\epsilon$ is obtained from the Dolbeault space defined in \cite{DMS19} by imposing $|q| < 1 - \epsilon$. The retraction defined in Lemma 6.37 of \cite{DMS19}, for the star-convex set $|q| < 1 - \epsilon$, is a diffeomorphism preserving the central fiber. The diffeomorphism required here is obtained by composing this with the diffeomorphisms constructed in \cite[Theorem 9.6 and Corollary 9.10]{DMS19}. 
\end{proof}
This is the Liouville structure on $\mht_{(\parone,1)}$ which we will henceforth use in this paper. Recall from \cite{GPS2} that a Liouville manifold is called weakly Weinstein if its skeleton $\mathcal{L}$ is mostly Lagrangian, and each smooth point $p \in \mathcal{L}$ admits a generalized cocore. The latter is an exact Lagrangian cylindrical at infinity with respect to the Liouville flow, which intersects $\mathcal{L}$ transversely at $p$ and nowhere else. 

  \begin{lemma} \label{lem:weaklyweinstein}
  The Liouville manifold $\mhta$ is weakly Weinstein. 
  \end{lemma}
  \begin{proof}
      The skeleton is mostly Lagrangian because it is a finite union of smooth Lagrangian submanifolds. 

      The generalized cocore through a smooth point $x \in \mathcal{L}$ consists of a closed Lagrangian disk $D_x$ intersecting $\mathcal{L}$ transversely at $x$, extended to a cylindrical Lagrangian disk by applying the Liouville flow to the boundary $\partial D_x$. 
      
      Recall from Proposition \ref{lem:dolbbasedilatingproperty} that $\liouvillevec(F) \geq F$, where $F = \langle \momj, \momj \rangle + \langle \momk, \momk \rangle$. This ensures that the extension along the flow is closed.

      More explicitly, fix a component $\mathfrak{X} \subset \widetilde{\mathcal{L}}$. By construction, the pullback of the Liouville vector field decomposes as $\liouvillevec = \tX + \hamvec_H$ on $\tDolparone$. As in the proof of Proposition \ref{lem:dolbbasedilatingproperty}, we have $\liouvillevec(F) = 2F$ and $\hamvec_H(F) \geq 0$. 
      
      On the other hand, there is a cocharacter $\sigma : \C^{\times} \to G$ generating a Hamiltonian vector field $\hamvec_\sigma$, such that in a Weinstein neighborhood of $\mathfrak{X}$ we have $\tX = \liouvillevec_{\operatorname{std}} + \hamvec_{\sigma}$. Here $\liouvillevec_{\operatorname{std}}$ is the standard Liouville vector field on $T^*\mathfrak{X}$. Because $G$ preserves $\momj, \momk$, we have $\hamvec_\sigma(F)=0$.

     Consider a smooth function $\rho : \R_{\geq 0} \to \R_{\geq 0}$ such that $\rho(x)=1$ for $x < 1/4$ and $\rho(x) = 0$ for $x > 1/2$. Then $\liouvillevec_\rho := \liouvillevec - \rho(F)(\hamvec_{\sigma} + \hamvec_H)$ satisfies $\liouvillevec_\rho(F) \geq F$. It coincides with $\liouvillevec_{\operatorname{std}}$ near $\mathfrak{X}$ and with $\liouvillevec$ near $\partial^{\infty} (\Dolparone)_\epsilon$. The attracting cell of $x$ with respect to $\liouvillevec_\rho$ is the requisite generalized cocore.


      \end{proof}

\begin{remark}\label{rem:weak}
As mentioned in Warning \ref{warn:weakly}, we expect that the Liouville structure defined in Proposition \ref{prop:mht-skel} 
is homotopic to a Weinstein structure, arising as in \cite{Eli-Ciel} from the natural Stein structure on $\mht_{(\parone,1)}$. More precisely, there should be a sufficiently Weinstein homotopy (as defined in \cite{NS20}) between the underlying Liouville structures. This would ensure that the resulting Fukaya categories are equivalent.    
However, we do not prove this, and prefer to use the Liouville structure constructed above without perturbation.
\end{remark}
%

\subsection{Polarizations}\label{subsec:polarizations}

Before we can proceed to a discussion of categories associated to these geometries, we need one more geometrical input about the spaces $\mht,$ which is the existence of a stable polarization.

\begin{definition} \label{def:polarization}
    A {\em stable polarization} of a symplectic manifold $(X,\omega)$ is a section of the bundle of stabilized Lagrangian Grassmannians $\LGr(T_xX\oplus \C^N),$ for $N\gg 0.$
\end{definition}

As we will discuss in the next section, this stable polarization can be understood as providing the data necessary to define the Fukaya category, or as the data necessary to glue together local microlocal-sheaf calculations.

\begin{definition}
    A {\em stable holomorphic polarization} of a holomorphic symplectic manifold $(X,\Omega)$ is a section of the bundle of stabilized holomorphic Lagrangian Grassmannians $\LCGr(T_xX\oplus T^*\C^N),$ for $N\gg 0.$
\end{definition}
A stable holomorphic polarization for $\Omega$ defines a polarization in the sense of Definition \ref{def:polarization} for $\omega = \Re(\Omega).$

The desirable features of holomorphic Lagrangians can be understood through the map $\LCGr(T^*\C^N) = Sp(N)/U(N)\to U(2N)/O(2N) = \LGr(\C^{2N}).$ In particular, the domain of this map is simply-connected, so that a family of holomorphic Lagrangian subspaces in $T^*\C^N,$ considered as real Lagrangians in $\C^{2N},$ will never wrap the Maslov cycle in $\LGr(\C^{2N}).$ The absence of any ``Maslov anomaly'' will be ultimately responsible for the existence of a perverse t-structure on the categories we describe in the next section.

\begin{lemma} \label{lem:polarisationfromreduction}
Let $X$ be a hyperk\"ahler manifold with a hyperhamiltonian action of a torus $T_\R$. Suppose $X$ has a $T_\R$-equivariant holomorphic polarisation $P \subset TX$. Then any smooth hyperk\"ahler reduction $Y = \mu_{\mathbb{H}}^{-1}(\lambda) / T_\R$ of $X$ has a stable holomorphic polarisation.
\end{lemma}
\begin{proof}
The restriction of $TX$ to the moment fiber $\mu_{\mathbb{H}}^{-1}(\lambda)$ may be identified with $\pi^* TY \oplus T^*\ft$ where $\ft$ is the complexified Lie algebra of $T_\R$ and $\pi : \mu_{\mathbb{H}}^{-1}(\lambda) \to \mu_{\mathbb{H}}^{-1}(\lambda) / T_\R = Y$ is the quotient map. Thus the descent $P$ to $Y$ is a Lagrangian subbundle of $TY \oplus T^* \ft$.
\end{proof}
\begin{corollary}\label{cor:hol-pol}
The manifold $\Dolparone$ admits a stable holomorphic polarization.
\end{corollary}
\begin{proof}
We start by constructing an $S^1$-invariant holomorphic polarisation on the basic space $\fOV$. Consider the bundle of holomorphic Grassmanians $\LCGr_\fOV$ over $\fOV$. The action of $S^1$ on $\fOV$ induces an action on $\LCGr_\fOV$; we want to find an $S^1$-invariant section of this bundle, or equivalently a section of the quotient fibration 
$\LCGr_\fOV / S^1 \to \fOV / S^1 = \D \times S^1$.

The fiber away from $0 \times 1$ is $\mathbb{P}^1$, and the fiber over $0 \times 1$ is $\mathbb{P}^1 / S^1 = [0,1]$. The end points of this interval are the images of the two $S^1$-fixed points in $\LCGr_\fOV$.

A continuous section of the fibration $\LCGr_\fOV / S^1 \to \fOV / S^1 \cong \D \times S^1$, passing through $0 \in [0,1]$, may be constructed as follows. Cover $S^1$ by two closed intervals $A_1, A_2$ meeting at two points. Suppose $A_1$ contains $1$ in its interior. The preimage $U_2 \subset \LCGr_\fOV / S^1 $ of $A_2$ is a $\mathbb{P}^1$-bundle over $\D \times [0,1]$. Since the base is contractible, this bundle is trivial, and we fix a trivialisation. The part of $\fOV$ living over $A_1$ is topologically (the closure of) a neighborhood of $0$ in $T^*\C$, which carries the usual cotangent polarization. This defines a continuous section on the preimage $U_1 \subset \LCGr_\fOV / S^1$ of $A_1$.  Restricted to either component of the boundary, this defines a continuous section of a $\mathbb{P}^1$ bundle over $\D$. The space of such sections is path connected, since $\D$ is contractible and $\mathbb{P}^1$ is connected. We extend the polarization over $U_2$ by choosing a path between the two boundary sections.

By taking products, we obtain a $T_\R$-invariant polarisation on $\TAno$. This induces a stable polarisation on the reduction $\Dolparone = \fOV^n /\!\!/\!\!/ {T_\R}$, by Lemma \ref{lem:polarisationfromreduction}.
\end{proof}

We now recall the notion of orientation class of a polarization. We follow the discussion in \cite[Section 3]{cote2022perverse} and refer the reader to that paper for further details. 

Let $O$ and $U$ denote the stable orthogonal and unitary groups. We view $O$ as the subgroup of $U$ preserving a fixed Lagrangian submanifold. Suppose that $Y$ is a symplectic manifold, with stable tangent bundle classified by a map $Y \to B(U)$. A stable polarization is a lift of this map to 
\begin{equation} \label{eq:polarizationlift} Y \to B(O). \end{equation} 
This lift determines a nullhomotopy of the composite map 
\begin{equation} \label{eq:orientationseq} Y \to B(U) \to B(U/O) \to B^2(O) \end{equation}
induced by the tautological nullhomotopy of the composition
\[ B(O) \to B(U) \to B(U/O). \]
A second nullhomotopy of \eqref{eq:orientationseq} is induced by the natural nullhomotopy of
\[ B(U) \to B(U/O) \to B^2(O). \]
The difference of the two nullhomotopies defines a map
\begin{equation} \label{eq:differenceofnulls} Y \to B(O).\end{equation}
which is canonically identified with \eqref{eq:polarizationlift}.
The orientation class of the polarization is the element of $H^2(Y,\Z/2\Z)$ classified by the composition of \eqref{eq:differenceofnulls} with $w_2 : B(O) \to B^2(\Z/2\Z)$. It thus coincides with with the second Stiefel-Whitney class of any bundle representing the polarization. 

Returning to our setting, we write $\eta$ for the stable polarization on $\Dolparone$ constructed in Corollary \ref{cor:hol-pol}, and $w_2(\eta) \in H^2(\Dolparone, \Z/2\Z)$ for its orientation. On the other hand, each component $\mathfrak{X}_{\Bx}$ of $\tDolfiber$ is a smooth manifold, which maps to an immersed manifold in $\Dolparone$. We denote by $w_2(\mathfrak{X}_{\Bx}) \in H^2(\mathfrak{X}_{\Bx}, \Z/2\Z)$ the second Stiefel-Whitney class of its cotangent bundle.

\begin{lemma}
The pullback of $w_2(\eta)$ by the immersion $\mathfrak{X}_{\Bx} \to \Dolparone$ equals $w_2(\mathfrak{X}_{\Bx})$.  
\end{lemma}
\begin{proof}
We start by proving an equivariant version of the claim on $\tfOV$. The polarization of $\fOV$ constructed in Corollary \ref{cor:hol-pol} is a rank one $S^1$-invariant subbundle of the tangent bundle. It therefore defines $S^1$-equivariant complex line bundle on $\tfOV$. Because the orientation class is the second Stiefel-Whitney class of this bundle, it has a natural lift to an $S^1$-equivariant class. The latter pulls back to an element of $H^2_{S^1}(\mathbb{P}^1, \Z/2\Z)$ which we may write as
\[ \alpha_0 [0] + \alpha_\infty [\infty] \] 
where $\alpha_0, \alpha_\infty \in \Z/2\Z$ and $[0], [\infty]$ are the classes of the two $S^1$ fixed points $0, \infty \in \mathbb{P}^1$. Using equivariant localisation, we find that $\alpha_0, \alpha_\infty$ are the parities of the $S^1$-representations defined by the fibers of the polarisation over $0, \infty$. Thus $\alpha_0 = \alpha_\infty = 1$. The result coincides with the $S^1$-equivariant Stiefel-Whitney class $w_2(\mathbb{P}^1)$, which is the reduction modulo two of the perhaps more familiar first Chern class of $\mathbb{P}^1$. 

We can apply the same argument to the product case $\Dolparone = \fOV^n$. The polarization is now viewed as an $(S^1)^n$-equivariant subbundle of $T\Dolparone$, and as in the basic case its orientation class pulls back to the equivariant Stiefel-Whitney class $w_2((\mathbb{P}^1)^n)$. 

We now consider the case of a general $\Dolparone$, which we view as a hyperk\"ahler reduction of $\fOV^n$. Fix a component $\mathfrak{X}_{\Bx}$ of the central fiber. As in the proof of Lemma \ref{lem:weinsteinnhds}, we view $\mathfrak{X}_{\Bx}$ as the symplectic reduction of $\prod_{i=1}^n(\mathbb{P}^1)_{k_i}$ by a subtorus $T_\R \subset (S^1)^n$. 

The claim now follows from the fact that both $w_2(\eta)|_{\mathfrak{X}_{\Bx}}$ and $w_2(\mathfrak{X}_{\Bx})$ are obtained by descent from the corresponding equivariant classes on $(\mathbb{P}^1)^n$. For the orientation class, this is clear by construction. For the Stiefel-Whitney class, we argue as follows. Suppose more generally that a torus $T$ acts by Hamiltonian automorphisms on a symplectic manifold $Y$, and let $X := Y /\!/\!_{\lambda} T = \mu^{-1}(\lambda)/T$ be the reduction at a regular value of the moment map. Let $\pi : \mu^{-1}(\lambda) \to X$ be the projection. We have $\pi^{-1}TX \oplus N_{\mu^{-1}(\lambda)}Y \cong TY_{\mu^{-1}(\lambda)} / \mathfrak{t}$, where the right-hand side is the natural quotient by the Lie algebra of $T$. The differential $d \mu$ gives an equivariant trivialisation of the normal bundle $N_{\mu^{-1}(\lambda)}Y$. Since Stiefel-Whitney classes are invariant under sums with and quotients by trivial bundles, we find $\pi^{-1}(w(TX)) = w(\pi^{-1}TX) = w(TY|_{\mu^{-1}(\lambda)})$. The second equality holds at the level of equivariant classes, and thus $w(TX)$ equals the descent of the $T$-equivariant Stiefel Whitney class of $TY$. 
 \end{proof}

\section{Microlocal perverse sheaves and the Fukaya category}

\subsection{Microlocal sheaves}\label{subsec:microlocal}
Let $X$ be a manifold. We write $\Sh(X)$ for the dg category of complexes of (possibly infinite-rank) constructible sheaves on $X$: for every object $\cF\in\Sh(X),$ there exists a stratification $X=\bigsqcup X_\eta$ such that the cohomology of the restriction of $\cF$ to each $X_\eta$ is a local system, possibly of infinite rank. (We emphasize that this is in contrast to the usual definition of constructibility, which generally assumes finite-dimensionality of the stalks.)

For $\Lambda\subset T^*X$ a conical Lagrangian, we write $\Sh_\Lambda(X)$ for the full subcategory of sheaves on $X$ with singular support contained in $\Lambda.$ This category localizes to a conic sheaf $\mu \Sh_\Lambda$ of dg categories on the Lagrangian $\Lambda$ (hence also a sheaf $\overline{\mu \Sh}_{\partial \Lambda}$ on the Legendrian $\partial \Lambda$), the sheafification of a presheaf of categories whose stalks are described in \cite{KS94}. Each smooth point $(x,\xi)\in \Lambda$ determines a functor
\[
\Sh_\Lambda(X)\to \mathsf{Vect}_k^{\operatorname{dg}}
\]
defined by taking a sheaf $\cF$ to the microstalk of $\cF$ at $(x,\xi),$ which we denote by $\cF_{(x,\xi)}.$ For $\xi\neq 0,$ this is also known as the vanishing cycles of $\cF$ at $x$ in codirection $\xi.$

\begin{definition}
We denote by $\Shc_\Lambda(X)$ the category of compact objects in $\Sh_\Lambda(X).$
\end{definition}
As discussed in \cite{Nwms,GPS3}, the left adjoints of the restriction maps for the sheaf $\mu \Sh_\Lambda$ preserve compact objects, so that $\Shc_\Lambda(X),$ equipped with the functoriality coming from these left adjoints, localizes to a {\it co}sheaf of categories on $\Lambda.$
\begin{definition}\label{def:mush-c-cotangent}
    We denote by $\mu\Shc_\Lambda$ (resp. $\overline{\mu\Sh}^c_{\partial \Lambda}$) the cosheaf of dg categories obtained from $\mu\Sh_\Lambda$ (resp. $\overline{\mu\Sh}_{\partial \Lambda}$) by taking compact objects.
\end{definition}

As first observed in \cite{Nwms}, it is easy to describe a set of generators of the category $\Shc_\Lambda(X)$ (which, from the Fukaya-categorical perspective discussed below, will correspond to transversal Lagrangian disks through points of $\Lambda$). Write $\Lambda^\circ$ for the complement of the singular locus in $\Lambda$; it will be a disjoint union of smooth components $\Lambda^\circ=\bigsqcup_{i=1}^r\Lambda_i^\circ$.
\begin{proposition}[{\cite[Corollary 4.21]{GPS3}}]\label{prop:gens}
Let $(x_i,\xi_i)$ be a point in $\Lambda_i^{\circ}.$ Then the functor taking an object $\cF$ to its microstalk $\cF_{(x_i,\xi_i)}$ is corepresented by an object $\cP_i$ of $\Shc_\Lambda(X).$ The objects $\cP_1,\ldots,\cP_r$ generate $\Shc(\Lambda).$
\end{proposition}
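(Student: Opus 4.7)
The plan is to construct each corepresenting object $\cP_i$ by a local model near the chosen smooth point $(x_i,\xi_i)\in\Lambda_i^\circ$, then deduce generation from the fact that microstalks on the smooth locus $\Lambda^\circ$ form a conservative family of functors on $\Shc_\Lambda(X)$.

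For the construction, first I would choose a smooth hypersurface $H_i\subset X$ through $x_i$ whose conormal codirection at $x_i$ agrees with $\xi_i$, so that near $(x_i,\xi_i)$ the conormal bundle $T^*_{H_i}X$ coincides with $\Lambda_i^\circ$. On a small contractible neighborhood $U\ni x_i$, the candidate $\cP_i$ is (a suitable shift of) the constant sheaf on the closed half-space of $U$ bounded by $H_i$ on the side opposite $\xi_i$, extended by zero to $X$. A direct microlocal computation (as in Kashiwara--Schapira) shows that this sheaf has singular support in the zero section together with $\Lambda_i^\circ$ near $(x_i,\xi_i)$, with microstalk a rank-one free module at $(x_i,\xi_i)$ and zero elsewhere on $\Lambda^\circ$ in $U$. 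One must then arrange that $\cP_i$ lies in $\Shc_\Lambda(X)$ globally; this is achieved by extending through a conic neighborhood of $\Lambda_i^\circ$ using noncharacteristic propagation, which guarantees the singular support stays inside $\Lambda$, and by observing that the resulting sheaf is compact because its stalks are perfect and its support is compact modulo the Liouville-conic direction.

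Corepresentability of the microstalk functor then follows formally: the adjunction between the $*$-pushforward from the half-space and the microlocalization functor identifies $\Hom(\cP_i,-)$ with the vanishing-cycles functor at $(x_i,\xi_i)$, which is precisely the microstalk. This is essentially the content of the Kashiwara--Schapira index formula in a local form, and was worked out in the $\infty$-categorical setting in \cite{Nwms,GPS3}.

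The main obstacle, and the step requiring the most care, is generation. Here I would argue as follows: if $\cF\in\Shc_\Lambda(X)$ satisfies $\Hom(\cP_i,\cF)=0$ for every $i$, then all microstalks of $\cF$ along the smooth locus $\Lambda^\circ$ vanish. Since the microstalk functors on the cosheaf $\mu\Shc_\Lambda$ are conservative over the open dense part $\Lambda^\circ$ of the support, $\cF$ must be microlocally zero on $\Lambda^\circ$; hence its singular support is contained in the singular locus $\Lambda\setminus\Lambda^\circ$, which is isotropic of strictly smaller dimension. By the involutivity theorem, a constructible sheaf with non-Lagrangian singular support must be zero. This forces $\cF=0$, so the $\{\cP_i\}$ generate $\Shc_\Lambda(X)$. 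The delicate point is the passage from "microstalks vanish on $\Lambda^\circ$" to "$\cF$ is zero"; this uses both the cosheaf property established in \cite{GPS3} and the fact that on strata of $\Lambda^\circ$ the local category is equivalent to (complexes of) vector spaces, so microstalks detect objects strata-locally.
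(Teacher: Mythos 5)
The paper does not supply its own proof of Proposition~\ref{prop:gens}; it is cited as a known result from \cite{Nwms,GPS3}. So I will evaluate your sketch on its own terms.

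Your generation argument is essentially the correct idea: vanishing of the microstalks at one point per component of $\Lambda^\circ$ propagates to all of $\Lambda^\circ$ (because, as you note in passing, on the smooth locus the microlocal category is a local system of categories with fiber $\mathrm{Vect}$, so the microstalk is locally constant along each $\Lambda_i^\circ$), and then $SS(\cF)$ lands in $\Lambda\setminus\Lambda^\circ$, which has dimension $<\dim X$, contradicting Kashiwara--Schapira involutivity unless $\cF=0$. You should, however, state the local-constancy step explicitly rather than folding it into a vague appeal to conservativity, since the statement specifies only a \emph{single} microstalk per component.

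The real gap is in the construction and compactness of $\cP_i$. The constant sheaf on a half-space inside a small neighborhood $U\ni x_i$, extended by zero, is \emph{not} an object of $\Sh_\Lambda(X)$: the boundary $\partial U$ contributes singular support not contained in $\Lambda$, and eliminating it is precisely the hard microlocal cut-off problem. The phrase ``extending through a conic neighborhood of $\Lambda_i^\circ$ using noncharacteristic propagation'' names the issue without resolving it. Likewise, your compactness claim (``perfect stalks, compact support modulo the Liouville-conic direction'') is a plausible description of the eventual object but is not how compactness is actually established. The route in \cite{GPS3} is non-constructive and cleaner: one observes that the microstalk functor $\mu_{(x_i,\xi_i)}\colon\Sh_\Lambda(X)\to\mathrm{Vect}$ preserves \emph{all} colimits, so by the adjoint functor theorem for presentable stable $\infty$-categories it is corepresentable by some $\cP_i$, and $\cP_i$ is automatically compact precisely because $\mu_{(x_i,\xi_i)}$ preserves filtered colimits. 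If you insist on an explicit local model, one must either pass to a colimit over shrinking neighborhoods (a wrapping construction) or invoke Guillermou-style refinement lemmas to kill the extraneous singular support at $\partial U$; as written, your construction does not land in $\Shc_\Lambda(X)$.
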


In \cite{Nwms}, the cosheaf of categories $\mu\Shc_\Lambda$ was defined, as above, for $\Lambda$ a conical Lagrangian in a cotangent bundles; the definition was generalized in \cite{Sh-hprinciple,NS20} to the case where $\Lambda$ is a conical Lagrangian in a general (sufficiently) Weinstein manifold, equipped with a stable polarization (or more generally with Maslov data).

\begin{theorem}[\cite{Sh-hprinciple}]\label{thm:definition-of-ns-mush}
Let $W$ a weakly Weinstein manifold, $\sigma$ a stable polarization of $W$ in the sense of Definition \ref{def:polarization}, and $\Lambda\subset W$ a conic Lagrangian containing the skeleton $\LL_W$ of $W,$ so that $\Lambda = \LL_W\cup \operatorname{Cone}(\partial^\infty\Lambda).$ Then there is a cosheaf of dg categories $\mu\Sh^c_{\Lambda,\sigma}$ on $\Lambda$.
If $W$ is a cotangent bundle equipped with its natural polarization $\sigma_{\mathsf{fib}}$ by cotangent fiber directions, then there is an equivalence
\begin{equation}\label{eq:ks-is-ns-musheaves}
\mu\Sh^c_{\Lambda,\sigma_{\mathsf{fib}}}\cong \mu\Sh^c_\Lambda
\end{equation}
of this cosheaf with the cosheaf defined in \ref{def:mush-c-cotangent} above.
\end{theorem}



It is further established in \cite{NS20} that the global section category $\mu\Shc_{\Lambda,\sigma}(\Lambda)$ of this cosheaf is invariant under Weinstein homotopies. In fact, the methods of that paper, combined with the tools developed in \cite{GPS2}, are sufficient to prove something stronger:

\begin{theorem}[{\cite[Theorem 1.4]{GPS3}}]\label{thm:gps-main}
Let $W,\sigma,\Lambda$ as above, and write $-\Lambda=\LL_W\cup\operatorname{Cone}(-\partial^\infty\Lambda)$ for the conic Lagrangian obtained from $\Lambda$ by applying the involution negating cotangent fibers. Then there is an equivalence of categories
\begin{equation}
     \mu\Shc_{-\Lambda,\sigma}(-\Lambda)\cong \Fuk(W,\partial^\infty\Lambda)
\end{equation}
between the global sections of the above cosheaf (for $-\Lambda$) and the Fukaya category of the Liouville sector of $W$ stopped at $\partial^\infty\Lambda.$ (If $\partial^\infty\Lambda=\emptyset,$ so $\Lambda=\LL_W,$ then this is the usual Fukaya category of $W$.)
\end{theorem}

This suggests a method of computing the Fukaya category of a stably polarized weakly Weinstein manifold $W$: first, determine a skeleton $\LL_W$ for $W$ together with an open cover $\LL=\bigcup \LL_i$ such that the $\LL_i$ are conic Lagrangians in cotangent bundles. Second, study the cosheaves $\mu\Sh^c_{\LL_i}.$ Finally, the Fukaya category of $W$ will be computed as global sections of the cosheaf $\mu\Sh^c_{\LL},$ which can be glued together from its values on the $\LL_i,$ so it remains only to compute that colimit. The first step is what we have accomplished in the previous section; as we have seen, in our case, each local piece $\LL_i$ will be of the form of the union of a toric variety together with conormals to its toric strata.

As colimits of categories can be difficult to understand, we will simplify the above procedure via the following trick:
\begin{lemma}\label{lem:colim-to-lim}
Let $\cC = \varinjlim \cC_i$ be a colimit of cocomplete dg categories $\cC_i$ along continuous functors $F_i$. Then $\cC$ is equivalent to the limit $\varprojlim \cC_i$ obtained by replacing $F_i$ by their left adjoints.
\end{lemma}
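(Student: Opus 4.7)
The plan is to deduce this lemma from the standard $\infty$-categorical duality between presentable dg categories with colimit-preserving versus limit-preserving functors, due to Lurie. First, I would slightly strengthen ``cocomplete'' to ``presentable,'' which is the setting where adjoint functor theorems apply cleanly. This entails no loss of generality in the applications at hand, since the sheaf categories $\cC_i = \Sh^c_{\Lambda_i}(X_i)$ appearing in Corollary~\ref{cor:secdec} are compactly generated by the corepresentatives of Proposition~\ref{prop:gens} (or rather by ind-completion), hence presentable; and continuous functors between presentable dg categories automatically admit adjoints on the opposite side.

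Next, I would invoke the fundamental contravariant equivalence $\mathrm{Pr}^L \simeq (\mathrm{Pr}^R)^{\mathrm{op}}$ given by passage to adjoints (Lurie, \emph{Higher Topos Theory}, Corollary 5.5.3.4): the $\infty$-category of presentable dg categories with colimit-preserving functors is opposite to the $\infty$-category of presentable dg categories with limit-preserving functors. Because any equivalence with an opposite $\infty$-category interchanges colimits and limits, this duality immediately converts the colimit diagram $(\cC_i, F_i)$ into a limit diagram $(\cC_i, F_i^{\mathrm{ad}})$ with arrows reversed and each transition functor replaced by its adjoint. This yields precisely the asserted equivalence $\colim_i \cC_i \simeq \lim_i \cC_i$.

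A secondary point worth emphasizing, because it is what makes the lemma useful in our setting, is that limits in $\mathrm{Pr}^R$ coincide with limits taken in the $\infty$-category $\mathsf{Cat}_\infty$ of all (small) $\infty$-categories, so the right-hand side can be computed levelwise on underlying $\infty$-categories — unlike the colimit on the left, which is notoriously hard to model explicitly.

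The only substantive obstacle in the argument is verifying presentability (and hence the existence of adjoints to $F_i$), but in the sheaf-theoretic context of this paper this is essentially automatic from compact generation. Once presentability is in hand, the lemma is a direct application of well-established $\infty$-categorical machinery, so no further delicate verifications are required.
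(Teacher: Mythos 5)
The paper does not actually give a proof of this lemma; it is stated as a known fact and immediately applied. Your argument via Lurie's contravariant equivalence $\mathrm{Pr}^L \simeq (\mathrm{Pr}^R)^{\mathrm{op}}$ (HTT 5.5.3.4), combined with the observation that limits in $\mathrm{Pr}^R$ are computed on underlying $\infty$-categories, is the standard proof and is surely what the authors intended, so in that sense there is nothing to compare against.

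There is, however, one point you should surface rather than finesse. Since the $F_i$ are taken to be continuous (colimit-preserving) functors between presentable categories, the adjoints furnished by Lurie's duality are the \emph{right} adjoints $F_i^R$; the equivalence sends a colimit diagram in $\mathrm{Pr}^L$ to a limit diagram in $\mathrm{Pr}^R$ over the opposite index category, with each $F_i$ replaced by $F_i^R$. The lemma as printed says ``left adjoints,'' which appears to be a slip (perhaps a confusion with the fact that the corestriction functors in a cosheaf of compactly generated categories also admit left adjoints). Your proof is correct but hides the issue by writing the neutral $F_i^{\mathrm{ad}}$ without naming the handedness; a referee would want this made explicit, and your write-up would be stronger if it flagged the discrepancy with the statement rather than gliding past it. A second, more minor, imprecision: $\Sh^c_{\Lambda_i}(X_i)$ is a small category of compact objects and is not itself presentable; the categories to which the lemma is applied are the big categories $\Sh_{\Lambda_i}(X_i)$ (equivalently the ind-completions), and it is those whose presentability licenses the use of the adjoint functor theorem. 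You gesture at this parenthetically, but it should be stated cleanly since it is exactly the hypothesis being used.
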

We can thus compute the colimit described above as
\begin{equation}\label{eqn:co-lim}
\varinjlim \mu\Sh_{\LL_i}^c(\LL_i) = (\varinjlim \mu\Sh_{\LL_i}(\LL_i))^c
= (\varprojlim \mu\Sh_{\LL_i}(\LL_i))^c.
\end{equation}
If the limit in \eqref{eqn:co-lim} is the dg category $A\dgmod$ of modules over a dg algebra $A$, then the compact objects $(A\dgmod)^c$ will be the dg category $A\dgperf$ of perfect $A$-modules. 

%
%

Our two remaining difficulties are, first, to compute the individual categories involved in this limit, and second, to keep track of the restriction maps among them, which may depend sensitively on how the polarization $\sigma$ varies as we move over the skeleton. Each of these difficulties will be addressed in Section \ref{subsec:perv-calc}; in the remainder of this subsection, we establish the auxiliary results we will need there. In each case, the crucial fact will be
that our Lagrangian skeleton $\LL$ is actually holomorphic Lagrangian, and the polarization $\sigma$ is actually a holomorphic polarization.


In fact, the Lagrangian skeleton $\LL$ is covered by local pieces $\LL_i$ which are conormals to unions of complex subvarieties in complex manifolds $X_i$.
Recall that in this case, the sheaf category $\Sh_{\LL_i}(X_i)$ has a t-structure whose heart $\Sh_{\LL_i}(X_i)^{\heartsuit}$ is the abelian category $\Perv_{\LL_i}(X_i)$ of perverse sheaves on $X_i$ with microsupport along $\LL_i.$ (Under the equivalence of $\Sh_{\Lambda}(X)$ with the Nadler-Zaslow infinitesimal Fukaya category \cite{NZ} of $T^*X,$ the abelian category $\Perv_{\LL_i}(X_i)$ is known by \cite{jin2015} to contain all the holomorphic Lagrangian branes in $T^*X_i$; analogous results for the usual Fukaya category under the Ganatra-Pardon-Shende equivalence can be found in \cite{cote2022perverse}.)
In good situations, this category remembers all the information of the category $\Sh_{\LL_i}(X_i)$, in the sense that $\Sh_{\LL_i}(X_i)$ may be recovered as the dg derived category of its heart.

Like the category of constructible sheaves, the category of perverse sheaves microlocalizes to a sheaf of categories (in this case a sheaf of {\em abelian} categories) $\muPerv,$ as constructed in \cite{Waschkies}. The perverse t-structure is easy to describe from a microlocal point of view.
\begin{proposition}[{\cite[Theorem 10.3.12]{KS94}}]\label{prop:pervt}
Let $X_i,\LL_i$ as above. An object $\cF$ in $\Sh_{\LL_i}(X_i)$ is contained in the perverse heart of this category if and only if the microstalk $\cF_{(x,\xi)}$ at every smooth point of $\LL_i$ is cohomologically concentrated in degree 0.
\end{proposition}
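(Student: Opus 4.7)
The plan is to work locally near a point $x \in X$ and translate between the microstalk description and the standard support/cosupport definition of the perverse t-structure. Since both conditions (being in the perverse heart, and having microstalks concentrated in degree $0$ at smooth points of $\Lambda$) are local on $X$, and since the complement of the smooth locus of $\Lambda$ has real codimension at least $2$ in $\Lambda$, it suffices to work near a chosen smooth point $(x,\xi)$ of $\Lambda$.

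First I would fix a Whitney stratification $X = \bigsqcup_\eta X_\eta$ by complex submanifolds such that $\Lambda \subseteq \bigcup_\eta T^*_{X_\eta}X$ and such that $\cF$ is constructible with respect to this stratification. If $(x,\xi)$ is a smooth point of $\Lambda$ lying over the stratum $X_\eta$ of complex codimension $c$, then $\xi$ is a generic conormal direction at $x$. The key geometric input is that, after choosing local holomorphic coordinates so that $X_\eta$ is a linear subspace and picking a holomorphic function $f$ with $df(x) = \xi$, one has an identification
\[
\cF_{(x,\xi)} \;\cong\; \phi_f(\cF)_x[-1]
\]
up to shift, where $\phi_f$ denotes the vanishing cycles functor. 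Equivalently, taking a normal slice $N$ of complex dimension $c$ to $X_\eta$ at $x$, the microstalk computes the normal Morse datum of $i_N^*\cF$ at $x$ with respect to a generic holomorphic function.

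For the forward direction (perverse $\Rightarrow$ microstalks in degree $0$), I would apply Gabber's theorem that $\phi_f[-1]$ is $t$-exact for the perverse $t$-structures on $X$ and on $\{f = 0\}$. Iterating this along a sequence of generic holomorphic functions that cut down to $X_\eta$, the microstalk is identified with a perverse sheaf on a point, hence concentrated in degree $0$. For the reverse direction, I would argue by induction on the dimension of strata (or equivalently by dévissage along the stratification). The microstalk condition on the open stratum of $\Lambda$ lying over the top-dimensional stratum $X_\eta$ forces $\cF|_{X_\eta}[\dim_\C X_\eta]$ to be a local system in degree $0$, giving the perversity conditions there. Passing to the next stratum, the cone on the adjunction $(j_\eta)_!(j_\eta)^* \cF \to \cF$ is supported on the smaller strata, and one checks that the microstalk hypotheses translate into the support and cosupport bounds $i_{\eta'}^* \cF \in D^{\leq -\dim_\C X_{\eta'}}$ and $i_{\eta'}^! \cF \in D^{\geq -\dim_\C X_{\eta'}}$ required for perversity.

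The hard part will be the backward direction: keeping careful track of shifts when converting the pointwise microstalk vanishing into the support/cosupport bounds, and verifying that the inductive step across strata really does produce the required perverse cohomology bounds (and not just something close). A clean way to package this is to note that the category $\Sh_\Lambda(X)$ admits a set of microstalk functors indexed by the components $\Lambda_i^\circ$ (as in Proposition~\ref{prop:gens}), and that these functors are jointly conservative; then $t$-exactness of each microstalk functor (with target in degree $0$) determines the $t$-structure. This reduces the statement to checking $t$-exactness of each microstalk functor on standard generators such as the corepresenting objects $\cP_i$, where it follows from the local model computation in Step 2.
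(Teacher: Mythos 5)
The paper does not prove this proposition itself: it is explicitly deferred to the literature, with the remark that the statement ``is known to experts but is rarely emphasized'' and a citation to \cite[Proposition 2.9]{jin2015}. So there is no in-paper proof to compare against; the relevant comparison is with the standard argument, which your sketch essentially reproduces.

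Your plan is the right one: identify the microstalk at a smooth point of $\Lambda$ with the stalk of a vanishing-cycles complex (up to shift), use Gabber's $t$-exactness of $\phi_f[-1]$ for the forward implication, and a stratified d\'evissage for the converse. Your alternative packaging at the end --- that the microstalk functors indexed by the components of $\Lambda^\circ$ are jointly conservative, so it suffices to check $t$-exactness against the corepresenting objects $\cP_i$ --- is clean and fits well with how the paper actually uses Proposition~\ref{prop:gens} and these projectives later, for instance in the proof of Proposition~\ref{prop:pgen}.

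One imprecision worth flagging: you phrase the forward direction as ``iterating'' vanishing cycles along a sequence of functions cutting down to $X_\eta$, but a single application suffices. For a holomorphic $f$ chosen generically with $df(x) = \xi$, Gabber gives that $\phi_f \cF[-1]$ is perverse, and the microlocal transversality built into ``$(x,\xi)$ is a smooth point of $\Lambda$ and $f$ is generic'' forces $\phi_f\cF[-1]$ to be supported at $\{x\}$ near $x$; a perverse sheaf on a point is concentrated in degree $0$, and its stalk there is precisely the microstalk. The iteration language risks suggesting that you get stuck on perverse sheaves on intermediate hypersurfaces --- the key step is the support constraint on the vanishing cycles, not a chain of functions. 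A second, minor caveat is that the paper works with possibly infinite-rank constructible sheaves; both Gabber's theorem and the d\'evissage go through verbatim in this setting, but it is worth saying so explicitly rather than implicitly importing finiteness from the usual references.

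The converse direction is, as you note, where the real work is; your sketch of the inductive check of support and cosupport bounds over the strata is the standard route and is fine, but you would want to spell out the translation from ``microstalk in degree $0$ along $\Lambda^\circ$ over $X_{\eta'}$'' to the two bounds $i_{\eta'}^*\cF \in D^{\leq -\dim_\C X_{\eta'}}$ and $i_{\eta'}^!\cF \in D^{\geq -\dim_\C X_{\eta'}}$ with the shift bookkeeping made explicit; that is exactly what \cite[Proposition 2.9]{jin2015} carries out.
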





In fact, as has been shown in \cite{cote2022perverse}, this t-structure exists for the category $\mu\Sh_{\Lambda,\sigma}(\Lambda)$ of microlocal sheaves on a general holomorphically polarized weakly Weinstein manifold with holomorphic Lagrangian skeleton. Outside the holomorphic setting, this is not the case for the usual t-structure on constructible sheaves, since after microlocalization, the microlocal stalk functor is defined only up to a shift in homological degree: a choice of homotopy between a given polarization $\sigma$ and the cotangent fiber polarization $\sigma_{\mathsf{fib}}$ can have arbitary winding number $k\in \Z$ around the Maslov cycle in $\LGr,$ shifting homological degree by $k$. This does not occur in the case of holomorphic polarizations, since the inclusion $\LCGr\to \LGr$ does not wrap the Maslov cycle:

\begin{lemma}[{\cite[Theorem 1.2]{cote2022perverse}}]\label{lem:mustalk-welldef}
Let $\Lambda$ be the Lagrangian skeleton of a weakly Weinstein manifold (or sector) equipped with holomorphic stable normal polarization $\sigma.$ At each smooth point $x$ of $\Lambda,$ there is a microlocal stalk functor
\[
\mu_x: \mu\Sh_{\Lambda,\sigma}(\Lambda)\to \mathsf{Vect}_k,
\]
well-defined up to (noncanonical) isomorphism.
If $\sigma=\sigma_{\mathsf{fib}}$ is the cotangent fiber polarization of a cotangent bundle, then the functor $\mu_x$ is isomorphic to the usual microlocal stalk functor.
\end{lemma}

Although the microstalk functor just defined is not canonical, it is canonical on objects, allowing one to define a pair of full subcategories $\mu\Sh_{\Lambda,\sigma}(\Lambda)^{\leq 0},\mu\Sh_{\Lambda,\sigma}(\Lambda)^{\geq 0}\subset \mu\Sh_{\Lambda,\sigma}(\Lambda)$ on the objects whose microstalks are concentrated in nonpositive (resp. nonnegative) degree.
\begin{theorem}[{\cite[Theorem 1.3]{cote2022perverse}}]
    Let $\Lambda$ be a holomorphic Lagrangian skeleton in a weakly Weinstein manifold (or sector) equipped with holomorphic stable polarization $\sigma.$  
    Then the categories
   $\mu\Sh_{\Lambda,\sigma}(\Lambda)^{\leq 0},\mu\Sh_{\Lambda,\sigma}(\Lambda)^{\geq 0}$ defined above
   determine a t-structure on the microsheaf category $\mu\Sh_{\Lambda,\sigma}(\Lambda).$
\end{theorem}

\begin{remark}
    The definitions and results about microlocal (perverse) sheaves discussed in this section are all proven first in the setting of Legendrians in a contact manifold and afterwards extended to conic Lagrangians in a Liouville manifold $(W,\lambda).$ A priori, these invariants may be sensitive to the choice of Liouville form $\lambda,$ although this is not reflected in our notation. Nevertheless, as we will explain in Remark \ref{rem:independence-of-liouville}, in practice the invariants we study will actually be independent of this choice, thus justifying our notation.
\end{remark}


\subsection{Perverse sheaves on toric varieties}\label{subsec:perv-calc}
Let $\mathfrak{X}_\Delta$ be a $k$-dimensional projective toric variety with momentum polytope $\Delta.$ We denote by $\Lambda\subset T^*\mathfrak{X}_\Delta$ the union of conormals to toric strata. We will be interested in the abelian category $\Perv_\Lambda(\mathfrak{X}_\Delta)$ of perverse sheaves on $\mathfrak{X}_\Delta$ constructible with respect to the toric stratification.

The fundamental calculation in this subject is the determination of this category in the case when $\mathfrak{X}_\Delta = \C^n,$ first accomplished in \cite{galligo1985}. We begin by recalling the $n=1$ case.

\begin{example}\label{ex:pbasic}
Suppose $\mathfrak{X}_\Delta=\C.$
The category $\Perv(\C,0)$ of perverse sheaves on $\C$ with singular support inside the union of the zero section and the conormal to 0 has a classical description, given by the linear-algebraic data of a pair of vector spaces and maps
\[
\operatorname{var}:\Phi\leftrightarrows\Psi:\operatorname{can},
\]
such that the compositions
$1_\Psi+\operatorname{var}\circ\operatorname{can}$ and $1_\Phi+\operatorname{can}\circ\operatorname{var}$ are invertible. 

As described for instance in \cite[\S 4]{Verdier} or \cite{Beilinson87}, the vector space $\Psi$ and $\Phi$ are the nearby and vanishing cycles, respectively, of an object in $\Perv(\C,0),$ and the invertible maps described above are their monodromies. In terms of the conic Lagrangian $\Lambda\subset T^*\C$ which is the union of conormals to toric strata (in other words, the union of the zero section with the conormal to the origin), the vector spaces $\Psi$ and $\Phi$ are the respective microstalks at the two components of the smooth locus $\Lambda^\circ = \C^\times\sqcup\C^\times$,
on which a perverse sheaf restricts (or microlocalizes) to a local system, with monodromy given as above.
\end{example}

The calculation for general $n$ is essentially a product of this calculation, which we will rephrase in terms of the moment-map geometry of $T^*\C^n.$

\begin{definition}\label{defn:toric-arrangement}
    Let $\Delta$ be the moment polytope of a toric variety, and consider the hyperplane arrangement given by extending all facets of $\Delta$ to hyerplanes, restricted to a small neighborhood of $\Delta$ so that no two hyperplanes intersect outside of $\partial\Delta.$ We will denote this arrangement by $\mathcal{H}_\Delta.$
\end{definition}

For $\Delta$ the moment polytope of $\C^n,$ this is just the hyperplane arrangement in $\R^n$ given by coordinate hyperplanes. The chambers of this hyperplane arrangement represent the moment polytopes of conormals of toric strata in $\C^n$: the upper-right orthant is the moment polytope of $\C^n$ itself; the lower-left orthant is the moment polytope of the Fourier dual $\C^n$ (the cotangent fiber at $0$); and in general, an orthant meeting the upper-right orthant at a face $F,$ representing a stratum $S\subset \C^n$ is the moment polytope of $T^*_F\C^n.$

Thus, each chamber $\delta$ in this arrangement corresponds to a toric variety (in this case, $\C^n$) whose dense torus we will denote by $T_\delta,$. Each facet connecting two chambers $\delta,\delta'$ represents a subvariety where the closures of $T_\delta$ and $T_{\delta'}$ meet in a subvariety with codimension 1 in each closure.  Let $T_{\delta\delta'}$ be the dense torus in this closure; this is a single $G$-orbit, and each point in it is stabilized by a 1-dimensional subtorus.  There are two parameterizations of this subtorus, corresponding to the normal vectors pointing into $\delta$ and $\delta'$ respectively.  We let 
$\gamma_F$ be the loop in $\pi_1(T_\delta)$ given by applying the $\delta$-parameterization to the positive parameterization of the unit circle in $\C^{\times}$, and similarly for the $\delta'$-parameterization in $\pi_1(T_\delta').$ Note that if we identify $T_{\delta}\cong T_{\delta'}$ using the $G$-action, these classes will be inverse in the fundamental group.

We will now associate a quiver with relations to this hyperplane arrangment.
\begin{definition}\label{def:perv-tor}
Let $\mathcal{H}$ be a locally finite hyperplane arrangement in $\ft^*$ with every $H$ hyperplane defined by an equation $\gamma_H=n_H$ for $\gamma_H\in \ft_{\Z}, n_H\in \R$, and let $U$ be its set of chambers. We define a quiver $Q_{\mathcal{H}}$ as follows.
\begin{enumerate}
    \item For every chamber $\delta$ in $U$, we associate a vertex $v_\delta.$ (We will also write $V_\delta$ for the vector space placed at vertex $V_\delta$ in a representation.)
    \item For every facet $F$ separating two chambers $\delta,\delta',$ there are a pair of opposite arrows
    \[
    u_{\delta'\delta}:V_{\delta'} \rightleftarrows V_\delta:u_{\delta\delta'}.
    \]
    \item The vector space $V_\delta$ at the vertex $v_\delta$ has an action of the  group algebra $\C[\pi_1(T)]$.
\end{enumerate}
For every facet $F$ separating two chambers $\delta,\delta',$
we will write $M_{F}$ for the endomorphism of $V_\delta$ given by
\[
M_{F}:=u_{\delta'\delta}u_{\delta\delta'}+1_{V_\delta}.
\]

The relations we impose on $Q_\Delta$ are the following:
\begin{enumerate}
    \item For $F$ a facet of $\delta$ whose affine span is a hyperplane $H$, the endomorphism $M_{F}\in\End(V_{\delta})$ is identified with the loop $\gamma_{H}\in \C[\pi_1(T)]$.  In particular, $M_F$ is invertible.
    \item If $F$ is a codimension-2 face along which $\delta$ and $\delta''$ meet, and we denote by $\delta_1',\delta_2'$ the two chambers which are adjacent (by a facet) to both $\delta$ and $\delta'',$ then 
    the following pairs of length-two paths in $Q_\delta$ agree:
    \begin{enumerate}
        \item The two paths from $V_{\delta''}$ to $V_\delta,$
        \item the two paths from $V_{\delta}$ to $V_{\delta''}$
        \item the two paths from $V_{\delta_1'}$ to $V_{\delta_2'}.$
    \end{enumerate}
\end{enumerate}
We will write $A_{\mathcal{H}}$ for the path algebra of the quiver $Q_\Delta$ with relations as above.
\end{definition}
In the case where $\mathcal{H}$ is the facets of the moment polytope $\Delta$ of $\C,$ it is clear that the category of $A_\Delta=A_{\mathcal{H}}$-modules is just a reformulation of the perverse sheaf category described in Example \ref{ex:pbasic}. But the above description has been engineered in order to serve equally well to describe perverse sheaves on the product variety $\C^n.$
\begin{theorem}[\cite{galligo1985}]\label{thm:galligo-etal}
The category $\Perv_\Lambda(\C^n)$ of perverse sheaves on $\C^n$ constructible with respect to its toric stratification is equivalent to the category $A_\Delta\mmod$ of $A_\Delta$-modules.
\end{theorem}

By using descent for categories of perverse sheaves, local copies of the calculation in Theorem \ref{thm:galligo-etal} can be glued together to produce an analogous description for a general toric variety.
\begin{theorem}[\cite{Dupont}]\label{thm:dupont}
Let $\mathfrak{X}_\Delta$ be a smooth toric variety with moment polytope $\Delta.$ Then the category $\Perv_\Lambda(\mathfrak{X}_\Delta)$ of perverse sheaves on $\mathfrak{X}_\Delta$ constructible with respect to its toric stratification is equivalent to the category $A_\Delta\mmod$ of modules over the algebra $A_\Delta.$
\end{theorem}
By construction, the correspondence is just as described in Example \ref{ex:pbasic}: given a perverse sheaf $\cF$ on $\mathfrak{X}_\Delta,$ the corresponding quiver representation will have at vertex $V_\delta$ the microstalk of $\cF$ at a point of the open orbit of $\mathfrak{X}_\delta,$ which we think of as a conormal, living inside $T^*\mathfrak{X}_\Delta,$ of a stratum of $\mathfrak{X}_\Delta.$

%

\begin{example}\label{ex:projectiveplane}
Let $\mathfrak{X}_{\Delta} = \mathbb{P}^2$. Its momentum polytope is the compact chamber $\delta_c$ of the arrangement depicted below. There are three chambers $\delta_1, \delta_2, \delta_3$ corresponding to the conormal bundles to the toric divisors in $\mathbb{P}^2$, and three chambers $\delta_{12}, \delta_{13}, \delta_{23}$ corresponding to the conormal fibers to the toric fixed points.    

\begin{equation*}\label{eq:P2-chambers}
  \begin{tikzpicture}[very thick,scale=2,baseline]
    \draw [ultra thick] (-.2,-.8) -- node[below, at start,scale=.8]{$1$} (-.2,2.3);
    (1.8,2.5);
    \draw [ultra thick] (-.8,-.2) -- node[left, at start,scale=.8]{$2$} (2.3,-.2);
  \draw [ultra thick] (-.8,1.8) -- node[left, at start,scale=.8]{$3$} (1.8,-.8);
 \node at (0.75,0.75) {$\delta_3$};
\node at (0.5,-0.5) {$\delta_2$};
\node at (-0.5,0.5) {$\delta_1$};
\node at (-0.5,-0.5) {$\delta_{12}$};
\node at (2,-0.5) {$\delta_{23}$};
\node at (-0.5,2) {$\delta_{13}$};
\node at (0.3,0.3) {$\delta_{c}$};

  \end{tikzpicture}
\end{equation*}

The corresponding quiver has one vertex for each chamber, and a pair of arrows for each plane separating two adjacent chambers:
\[
\xymatrix{
v_{13}\ar@<-.5ex>[dd]\ar@<-.5ex>[rrd]&&&\\
&&v_3\ar@<-.5ex>[llu]\ar@<-.5ex>[ddr]\ar@<-.5ex>[ld]&\\
v_1\ar@<-.5ex>[uu]\ar@<-.5ex>[d]\ar@<-.5ex>[r]&v_c\ar@<-.5ex>[l]\ar@<-.5ex>[ur]\ar@<-.5ex>[d]&&\\
v_{12}\ar@<-.5ex>[u]\ar@<-.5ex>[r]&v_2\ar@<-.5ex>[u]\ar@<-.5ex>[rr]\ar@<-.5ex>[l]&&v_{23}\ar@<-.5ex>[ll]\ar@<-.5ex>[uul]
}
\]

\end{example}

Moreover, perverse sheaves do encapsulate the whole Fukaya category in this case, in the sense that the dg category $\Sh_\Lambda(\mathfrak{X}_\Delta)$ is equal to the dg derived category of $\Perv_\Lambda(\mathfrak{X}_\Delta):$

\begin{proposition}\label{prop:pgen}
$\Sh_\Lambda(\mathfrak{X}_\Delta)$ is equivalent to the dg category $A_\Delta\dgmod$ of modules over the algebra $A_\Delta.$
\end{proposition}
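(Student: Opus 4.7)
The plan is to produce a compact generator of $\Sh_\Lambda(\mathfrak{X}_\Delta)$ whose endomorphism dg algebra is concentrated in degree zero and isomorphic to $A_\Delta$; once this is established, standard dg Morita theory gives $\Sh_\Lambda(\mathfrak{X}_\Delta)\simeq A_\Delta\dgmod$, and passing to the heart of the tautological t-structure recovers Theorem~\ref{thm:dupont}.

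First I would appeal to Proposition~\ref{prop:gens} to obtain, for each chamber $\delta$ of the hyperplane arrangement in $U$ (equivalently, for each smooth component $\Lambda_\delta^\circ$ of $\Lambda$), a corepresenting object $\cP_\delta \in \Shc_\Lambda(\mathfrak{X}_\Delta)$ for the microstalk functor at a chosen point $(x_\delta,\xi_\delta)\in\Lambda_\delta^\circ$. By the same proposition the $\cP_\delta$ compactly generate, so $P := \bigoplus_\delta \cP_\delta$ is a compact generator of $\Sh_\Lambda(\mathfrak{X}_\Delta)$ after ind-completion.

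Next I would verify that each $\cP_\delta$ is perverse. By the microstalk criterion of Proposition~\ref{prop:pervt} this amounts to showing that each microstalk $(\cP_\delta)_{(x_{\delta'},\xi_{\delta'})}\simeq \RHom(\cP_{\delta'},\cP_\delta)$ is concentrated in cohomological degree $0$. Since $\mu\Shc_\Lambda$ is a cosheaf on $\Lambda$, this question is étale-local on $\mathfrak{X}_\Delta$, and the skeleton is modeled near every stratum on a quotient $\C^n/\Gamma$ (with $\Gamma$ a finite subgroup of the diagonal torus) equipped with the coordinate-hyperplane stratification. On $\C^n$ itself the microlocal refinement of the calculation of Galligo--Granger--Maisonobe~\cite{galligo1985} shows that the corepresenting objects of microstalks are perverse and have Ext-algebra concentrated in degree zero; the orbifold case follows by $\Gamma$-equivariant descent from the unramified cover. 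Together with Theorem~\ref{thm:dupont}, which identifies the underived endomorphism algebra of $P$ with $A_\Delta$ (the $\cP_\delta$ mapping to the indecomposable projective $A_\Delta$-modules corresponding to vertices of $Q_\Delta$), this gives an isomorphism of dg algebras $\End_{\Sh_\Lambda}(P)\simeq A_\Delta$, concentrated in degree $0$.

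The main obstacle is the higher-Ext vanishing for the local model. Once that is in hand, the rest is formal: dg Morita theory delivers the equivalence $\Sh_\Lambda(\mathfrak{X}_\Delta)\simeq A_\Delta\dgmod$, the perverse heart (characterized by Proposition~\ref{prop:pervt}) is sent to $A_\Delta\mmod$, and by Theorem~\ref{thm:dupont} this matches $\Perv_\Lambda(\mathfrak{X}_\Delta)$ as required. A technical detail to watch is coherence with ind-completion: the equivalence $\Shc_\Lambda\simeq A_\Delta\dgperf$ between compact objects extends uniquely to the full (cocomplete) categories, which is what the cosheaf structure used in Corollary~\ref{cor:secdec} ultimately requires.
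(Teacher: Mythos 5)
Your proposal follows the same basic strategy as the paper's proof: identify the corepresenting objects $\cP_\delta$ from Proposition~\ref{prop:gens}, verify they lie in the perverse heart (so that $\End(P)$ is concentrated in degree~$0$), and conclude by dg Morita theory that $\Sh_\Lambda(\mathfrak{X}_\Delta)\cong A_\Delta\dgmod$. The difference is how perversity of the $\cP_\delta$ is established, and here your argument has a gap that the paper takes pains to avoid.

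You assert that because $\mu\Shc_\Lambda$ is a cosheaf, the computation of $\RHom(\cP_{\delta'},\cP_\delta)$ ``is \'etale-local.'' This is morally right, but it requires first identifying the \emph{global} corepresenting object $\cP_\delta$ with $\iota_!$ applied to a \emph{local} corepresenting object in a toric chart, and then using the adjunction $\RHom(\iota_! F, G)\cong\RHom(F,\iota^* G)$ to push the computation onto the chart. Without that identification, nothing forces the restriction $\cP_\delta|_V$ to a chart $V$ around $x_{\delta'}$ to be the local corepresenting object (or even to have a tractable description), and invoking the Galligo--Granger--Maisonobe computation begs the question. Likewise, your appeal to an unstated ``microlocal refinement'' of \cite{galligo1985} asserting that the local corepresenting objects on $\C^n$ are perverse with degree-$0$ Ext-algebra is precisely the base case of the proposition; it needs its own argument. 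The paper supplies both missing ingredients by construction rather than by abstract nonsense: it builds $\cP_\delta$ explicitly as $\iota_!$ of the projective object attached to the zero section on a toric chart (the free local system with fiber $\C[\pi_1(G)]$, extended by $\iota_!$), and then Fourier transforms to reach the conormal components associated to positive-codimension strata. Since $\iota_!$ for open inclusions and Fourier transform are both perverse t-exact and compatible with microstalks, the constructed objects are visibly perverse and corepresenting, and the Hom degree-$0$ claim follows directly as the microstalk of a perverse sheaf. Your argument can be repaired along these lines, but as written the ``\'etale-local'' step and the appeal to \cite{galligo1985} both need to be spelled out; the Fourier-transform trick is the cleanest way to do so and is what the paper's proof supplies.
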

\begin{proof}
The dg category $A_\Delta\dgmod$ is the derived category of $\Perv_\Lambda(\mathfrak{X}_\Delta).$
As described in Proposition~\ref{prop:gens}, the category $\Sh_\Lambda(\mathfrak{X}_\Delta)$ is generated by the objects $\cP_i$ which corepresent microstalk functors at smooth points of $\Lambda$, so we need to prove that all of the objects $\cP_i$ are contained in the heart of the perverse t-structure on $\Sh_\Lambda(\mathfrak{X}_\Delta)$ -- i.e., that the objects corepresenting microstalk functors in $\Perv_\Lambda(\mathfrak{X}_\Delta)$ also corepresent microstalk functors in $\Sh_\Lambda(\mathfrak{X}_\Delta).$

First, consider the case where $\cP_i$ corresponds to the open torus orbit in $\mathfrak{X}_\Delta$.  Consider the free (infinite-rank) local system $\mathcal{L}$ on the open torus orbit $U_0\cong G$ with fiber $\C[\pi_1(G)]$ obtained by pushing forward the constant sheaf on the universal cover of $G$.  If $\iota \colon U_0\to \mathfrak{X}_\Delta$ is the inclusion, then we claim $\iota_!\mathcal{L}$ corepresents the functor of taking stalk at any point of $U_0$ (i.e., at any smooth point on the zero section $\mathfrak{X}_\Delta\subset \Lambda).$
Indeed, for any $\mathcal{F}\in \Sh_\Lambda(\mathfrak{X}_\Delta),$ we have
\begin{equation}\label{eq:zerosec-proj}
\Hom_{\Sh_\Lambda(\mathfrak{X}_\Delta)}(\iota_!\mathcal{L},\mathcal{F}) \cong
\Hom_{\Loc(U_0)}(\mathcal{L},\iota^!\mathcal{F}) \cong
\Hom_{\Loc(U_0)}(\mathcal{L},\iota^*\mathcal{F}),
\end{equation}
where the first equivalence is by adjunction and the second follows from the fact that $\iota$ is an open immersion. Since $\mathcal{L}$ is the universal local system on the torus $U_0,$ we conclude that \eqref{eq:zerosec-proj} is indeed equivalent to the (micro)stalk of $\mathcal{F}$ at a smooth point of the zero section.

Now consider the case of a general component of the Lagrangian $\Lambda$. Recall that the components of the smooth locus of $\Lambda$ are in bijection with toric subvarieties in $\mathfrak{X}_\Delta$. 
Given such a toric subvariety, choose a $T$-fixed point $x$ in its closure, and let $U\subset \mathfrak{X}_\Delta$ be the open subset given by all torus orbits which contain $x$ in their closure.  This subset may be identified with an orbifold quotient of $\C^d,$ with the various toric subvarieties corresponding to the coordinate subspaces in $\C^d$, and the components of $\Lambda^\circ$ given by the conormals to these.  In this context, we have already constructed the perverse sheaf corepresenting the functor of microstalk at a smooth point in the zero section, and the sheaves corepresenting microstalks along generic covectors to other toric subvarieties of $\C^d$ may be constructed from this one by Fourier transform.

To extend these perverse sheaves from $U$ to all of $\mathfrak{X}_\Delta,$ we push forward by $\iota_!$ under the inclusion $\iota \colon U\to \mathfrak{X}_\Delta$ as above.  Since Fourier transform and $\iota_!$ for an open inclusion are exact in the  perverse $t$-structure, the results of this pushforward remain perverse. Moreover, they continue to corepresent microstalk functors on $\Sh_\Lambda(\mathfrak{X}_\Delta)$: this may be deduced from the fact that for $\mathcal{F}\in \Sh_\Lambda(\mathfrak{X}_\Delta),$ we have
\[
\Hom_{\Sh_{\Lambda}(\mathfrak{X}_\Delta)}(\iota_!\mathcal{P},\mathcal{F})\cong \Hom_{\Sh_{\Lambda}(U)}(\mathcal{P},\iota^*\mathcal{F});
\]
by construction, the object $\mathcal{P}$ measures the desired microstalk in $\Sh_\Lambda(U),$ and the pullback $\iota^*$ does not affect the microstalk of $\mathcal{F}$ there.

In particular, this implies that the Hom spaces $\Hom_{\Sh_\Lambda(\mathfrak{X}_\Delta)}(\cP_i,\cP_j)$ are concentrated in degree 0, since this Hom space can be interpreted as the microstalk of $P_j$ at vertex $v_i.$
\end{proof}

Note that Theorem \ref{thm:dupont} remains true if we allow $\mathfrak{X}_\Delta$ to be an orbifold toric variety which is smooth when considered as a DM stack; we provide here a couple of examples to illustrate this point.

\begin{example}\label{ex:orbifold}
Consider the case $\mathfrak{X}_\Delta=\C/\{\pm 1\}$; even though the underlying variety is still isomorphic to $\C,$ the space $\mathfrak{X}$ carries a nontrivial orbifold structure.  We can find this as a Lagrangian inside the additive hypertoric orbifold $\C^2/\{\pm 1\}$, which one can interpret as the cotangent bundle of $\mathfrak{X}_\Delta$.  
 The category of perverse sheaves on $\mathfrak{X}_\Delta=\C/\{\pm 1\}$ is the category of $\{\pm 1\}$-equivariant perverse sheaves on $\C$.  As before, we can take $\Phi$ and $\Psi$ with the maps $\operatorname{var}$ and $\operatorname{can}$, but the operation of parallel transport from $1$ to $-1$ in $\C^{\times}$ followed by moving back to $1$ using the $\{\pm 1\}$-action gives an endomorphism of $\Psi$ which is the square root of the monodromy.  
\end{example}
In case the example $\C/\{\pm 1\}$ is confusing, the following example offers a possibly clearer illustration:
\begin{example}Let $\mathfrak{X}_\Delta =\C^2/(\Z/2).$ 
If we write 
\[
\C[\mathfrak{X}_\Delta]=\C[x,y]^{\Z/2}=\C[x^2,xy,y^2]=\C[a,b,c]/(b^2-ac),
\]
then we see that on the open stratum of $\mathfrak{X}_\Delta$, 
the torus acting is 
\[
T_\Delta:=\Spec\C[a^\pm,b^\pm,c^\pm]/(b^2-ac).
\]
In $Q_\Delta,$ the two pairs of maps at the vertex $V_\Delta$ corresponding to this stratum have compositions equal to $1-a$ and $1-c$, respectively. However, in contrast to the normal crossings case, $1+u_1v_1$ and $1+u_2v_2$ do not give all the invertible self-loops of $V_\Delta,$ but rather this vertex is equipped with an extra automorphism $b$ which is equal to a square root of $ac.$
\end{example}

By \eqref{eq:ks-is-ns-musheaves}, the above results have given a computation of the sheaf of categories $\mu\Sh_{\Lambda,\sigma_{\mathsf{fib}}}$ in $T^*\mathfrak{X}_\Delta.$ Such Lagrangians $\Lambda$ will form a cover of the skeleton $\LL$ which will study below, and hence we will be interested in the restriction to $\Lambda$ of $\mu\Sh_{\LL,\sigma}$ for a more general holomorphic polarization $\sigma.$ We now investigate establish invariance of the sheaf of categories $\mu\Sh$ under certain changes of polarization.

\begin{remark}\label{rem:independence-of-liouville}
    Hidden by our notation is the fact that for a conic Lagrangian $\Lambda$ in a Weinstein manifold $(W,\lambda)$, the sheaf of categories $\mu\Sh_{\Lambda,\sigma}$ depends not only on $\sigma$ but also on the Liouville form $\lambda$, since it is defined first for contact manifolds, and then extended to Liouville manifolds by embedding $W = W\times \{0\}\hookrightarrow \R\times W$ in the contact manifold $(\R\times W, dt+\lambda).$ However, in fact, the change in Liouville form will not affect the above calculations: if $\lambda,\lambda'$ are two Liouville forms on $T^*\C^n$ which vanish on the conic Lagrangian $\Lambda$, then, writing $\lambda-\lambda'=df,$ the family $(t,w)\mapsto (t+sf, w),$ for $0\leq s\leq 1,$ gives a contact isotopy, preserving the Legendrian $\{0\}\times \Lambda,$ on $\R\times W$ between the contact forms $dt+\lambda$ and $dt+\lambda',$ showing invariance of $\mu\Sh_{\Lambda,\sigma}$ under the change in Liouville form.
\end{remark}

Let $\mathfrak{X}_\Delta$ be a smooth toric variety and let $\Lambda\subset T^*\mathfrak{X}_\Delta$ be the union of conormals to toric strata.  Consider the cotangent fiber polarization 
$\sigma_{\mathsf{fib}}$ of $T^*\mathfrak{X}_\Delta.$ Now fix a stable holomorphic polarization
$\sigma$ 
with the property that the orientation class $w_2(\sigma)\in H^2(T^*\mathfrak{X}_\Delta, \Z/2)\simeq H^2(\mathfrak{X}_\Delta,\Z/2)$ discussed in Section \ref{subsec:polarizations} agrees with $w_2(\mathfrak{X}_\Delta)$.
\begin{proposition}\label{prop:canonical}
There is an equivalence of cosheaves of dg categories
\begin{equation}\label{eq:toric-pol-indep}
\mu\Sh_{\Lambda,\sigma}^c\cong \mu\Sh_{\Lambda,\sigma_{\mathsf{fib}}}^c
\end{equation}
respecting the microlocal perverse t-structures.
The space of such equivalences \eqref{eq:toric-pol-indep} is a torsor for the space of maps to $B\Z/2.$
\end{proposition}
\begin{proof}

As is shown in  \cite[Proposition 11.19]{NS20} (cf. \cite[\S 5.3]{GPS3} for relation to the traditional grading and orientation data), 
the data necessary for defining a microlocal-sheaf category (with coefficients in $\Z$-modules) within the symplectic manifold $T^*\frX_\Delta$ is a nulhomotopy of  the composition 
\begin{equation}\label{eq:maslov-map}
\xymatrix{
T^*\frX_\Delta\ar[r]& B(U/O)\simeq B^2(\Z\times BO)\ar[r]&B^2\Pic\Z \simeq B^2\Z\times B^3(\Z/2),
}
\end{equation}
where the first map in \eqref{eq:maslov-map} classifies the stable Lagrangian Grassmannian bundle of $T^*\frX_\Delta,$ and \cite[Theorem 1.2]{Jin-jhom} (following an earlier calculation for $\Z$-module coefficients in \cite[Theorem 11.5]{Guillermou})  identifies the second map with the double delooping of the $J$-homomorphism, followed by the change-of-coefficients map $\Pic(\mathbb{S})\to\Pic(\Z).$

The microsheaf categories appearing in \eqref{eq:toric-pol-indep} are defined by the respective trivializations of \eqref{eq:maslov-map} supplied by the polarizations $\sigma,\sigma_{\mathsf{fib}}$ (or rather their underlying real polarizations)
which by definition are nulhomotopies of the first map $T^*\frX_\Delta\to B(U/O)$ in \eqref{eq:maslov-map}.

The difference between these two nulhomotopies is therefore a map
\[
T^*\mathfrak{X}_\Delta\to \Omega B(U/O) = B(U/O),
\]
which induces (by composition with the looped right-hand map of \eqref{eq:maslov-map}) a map 
\begin{equation}\label{eq:measuring-difference}
   T^*\mathfrak{X}_\Delta\to B\Pic(\Z) \simeq B\Z\times B^2(\Z/2),  
\end{equation}
measuring the difference between the respective sheaves of categories defined by these polarizations.


We claim that the map \eqref{eq:measuring-difference} trivial. Indeed, the two nulhomotopies of the first factor in \eqref{eq:maslov-map} are the same since both (coming from {\em holomorphic} polarizations) are induced from the canonical nulhomotopy of the composition $Sp/U\to U/O\to B\Z,$ which shows that the first factor in \eqref{eq:measuring-difference} is trivial. On the other hand, by definition, the second factor measures the difference between $w_2(\sigma)$ and $w_2(\sigma_{\mathsf{fib}}) = w_2(\mathfrak{X}_\Delta),$ and our assumption on $\sigma$ guarantees that these are equivalent.
The choice of such an equivalence is a torsor for the space of maps from the variety $T^*\mathfrak{X}$ to $\Omega B^2(\Z/2) = B\Z/2.$
\end{proof}

\subsection{The global calculation}\label{sec:global}
We are now ready to glue together the local perverse-sheaf computations described above. We begin by describing what the result of this gluing will be.

Recall that the hyperplane arrangement $\Bper_\parone$ defined in \ref{def:per-arrangement} expresses $\R^n$ as an infinite union of $n$-dimensional polytopes.

\begin{definition}\label{def:abstract-quiver}
Let $\cQ_\parone$ be the quiver with relations associated to the hyperplane arrangement $\Bper_\parone$ by Definition~\ref{def:perv-tor}. 
\end{definition}

The category $\cQ_\parone\mmod$ can be reconstructed from subquivers associated to pieces of the hyperplane arrangement $\Bper_\parone.$ For each vertex $v\in \Bper_\parone,$ write $\cQ^v_\parone$ for the full subquiver of $\cQ_\parone$ (with the same relations) obtained by forgetting the vertices of $\cQ_\parone$ associated to any chambers of $\Bper_\parone$ not containing $v$ as a vertex. Similarly, for $F$ a higher-dimensional face (or chamber) in $\Bper_\parone,$ we write $\cQ_\parone^F$ for the subquiver whose vertices correspond to the chambers in $\Bper_\parone$ containing $F$.

Observe that for an inclusion $F\subset F'$ in $\Bper_\parone,$ we have an inclusion of quivers-with-relations $\cQ_\parone^{F'} \to \cQ_\parone^F$ and hence a restriction map on module categories $\cQ_\parone^F\mmod\to\cQ_\parone^{F'}\mmod,$ and that these functors commute for chains of inclusions.

\begin{definition}Let $\bP$ denote the  face poset of this polytopal decomposition of $\R^n$ given by $\Bper_\parone$. 
\end{definition}

\begin{lemma}\label{lem:quiver-cover}
There is a canonical equivalence
\begin{equation}\label{eq:quiver-map}
    \cQ_\parone\dgmod \xrightarrow{\sim} \varprojlim_{F\in \bP}\cQ^F_\parone\dgmod
\end{equation}
between the dg category of $\cQ_\parone$-modules and the homotopy limit of the dg categories of $\cQ^F_\parone$ modules.
\end{lemma}
We give a proof of this result in Appendix \ref{appendix:holim-proof}, written jointly with Laurent C\^ot\'e and Justin Hilburn.
The idea of the proof is relatively simple:  ``a representation of $\cQ_\parone$ is the same as a representation of its subquivers $\cQ^F_\parone,$ together with coherent equivalences among the restrictions to shared subquivers.'' However, establishing an equivalence of dg categories requires checking that the spaces of such equivalence data are contractible. 
This is obvious if the hyperplane arrangement $\Bper_\parone$ is 1-dimensional; it is still true in higher dimensions, but the bookkeeping is more tedious, which is why we have banished it to an appendix.


We will now produce this same limit of categories in a description of microlocal sheaves on our Lagrangian skeleton $\LL.$ 
As above, to simplify notation we will work on the universal cover $\widetilde{\LL},$ a holomorphic Lagrangian which is the union of toric varieties whose moment polytopes are the chambers in $\Bper_\parone.$

\begin{proposition}\label{prop:fuk-is-lim}
Let $\widetilde{\sigma}$ be the holomorphic polarization on $\widetilde{\mht}_{(\parone,1)}$ pulled back from the polarization $\sigma$ constructed on $\mht_{(\parone,1)}$ in Corollary \ref{cor:hol-pol}.
Then
the global microlocal sheaf category $\mu\Sh_{\widetilde{\LL},\widetilde{\sigma}}(\widetilde{\LL})$ 
is equivalent to the limit $\varprojlim_{F\in\bP}\cQ_\parone^F\dgmod.$
\end{proposition} 
\begin{proof}
  The Lagrangian $\tilLL$ has an open cover indexed by the face poset $\bP$ of the hyperplane arrangement $\Bper_\parone$: to a face $F$ we write $\tilLL_F\subset \tilLL$ for the open subset of $\tilLL$ given by the union of toric orbits in $\tilLL$ whose moment polytopes contain $F$ in their closure. The basic open sets in this cover are the opens $\tilLL_v$ associated to the vertices $v$, and their intersections are given by
  \[
  \tilLL_{v_1}\cap\cdots\cap \tilLL_{v_k} = \tilLL_{F}
  \]
  if $F$ is the minimal face containing all the $v_i$ (with empty intersection if no such face exists).
  
  The category $\mu\Sh_{\widetilde{\LL},\widetilde{\sigma}}(\LL)$ is the global sections of a sheaf of categories $\mu\Sh_{\widetilde{\LL},\sigma},$ so we may compute it as the limit of its sections over the open cover $\{\tilLL_F\}_{F\in\bP}$ which we just described.
 
 Now observe that for any vertex for a vertex $v\in \bP,$ the Lagrangian $\tilLL_v$ is equivalent to the union of conormals to toric strata in $\C^n,$ and for any face $F$ containing $v$, the Lagrangian $\tilLL_F$ is the sub-Lagrangian obtained by deleting some coordinate axes in $\C^n$ (and their conormals).
 

%
For a chamber $C$ with vertices $v_1,\ldots, v_k,$ write
\[
\tilLL_{\overline{C}}:=\tilLL_{v_1}\cup \cdots \cup \tilLL_{v_k}
\]
for the natural open neighborhood of the closure of $\tilLL_C.$ This open subset is naturally identified with the union of conormals to toric strata in $T^*\frX_{\overline{C}},$ where $\frX_{\overline{C}}$ is the toric component of the skeleton $\LL$ corresponding to chamber $C$, and
Proposition \ref{prop:canonical} gives an equivalence
between $\mu\Sh_{\tilLL,\widetilde{\sigma}}|_{\tilLL_{\overline{C}}}$ and microsheaves computed in the cotangent fiber polarization of $T^*\mathfrak{X}_{\overline{C}}.$

For each face $F$ of $\overline{C},$ we thus have an equivalence
\[
\mu\Sh_{\tilLL,\widetilde{\sigma}}(\tilLL_F)\simeq \cQ_\parone^F\dgmod,
\]
such that these equivalences coherently intertwine the restriction diagrams
\begin{equation}\label{eq:vertex-to-chamber}
\mu\Sh_{\tilLL,\widetilde{\sigma}}(\tilLL_{F})\to \mu\Sh_{\tilLL,\widetilde{\sigma}}(\tilLL_{F'}),
\end{equation}
for subfaces $F'\subset\overline{F},$ with the maps
\begin{equation}\label{eq:mu-restr}
\cQ_\parone^F\dgmod\to \cQ_\parone^{F'}\dgmod    
\end{equation}
given by forgetting the vertices of $\cQ_\parone^F$ corresponding to chambers whose closures do not contain the face $F'$.



Since the equivalence provided by Proposition \ref{prop:canonical} is canonical up to a map to $B\Z/2,$ and the open subsets $\tilLL_{\overline{C}}$ and their intersections are simply-connected, the above identifications are compatible globally over the skeleton.
We conclude that the $\bP$-diagram of categories $\mu\Sh_{\tilLL,\widetilde{\sigma}}(\tilLL_F)$ and restriction maps is equivalent to the $\bP$-indexed diagram of categories $\cQ_\parone^F$ and ``forget vertex'' maps, from which the result follows.
\end{proof}

Combining the equivalences of Lemma \ref{lem:quiver-cover} and Proposition \ref{prop:fuk-is-lim}, we conclude:
\begin{corollary}
    There is an equivalence of categories
    \begin{equation}\label{eq:equivalence-for-periodic-musheaves}
    \mu\Sh_{\tilLL,\widetilde{\sigma}}(\tilLL) \cong \cQ_\parone\dgmod.
    \end{equation}
\end{corollary}

We have been dealing so far with the universal cover $\widetilde{\mht}_{(\parone,\tone)}$,  but it is easy to deduce from the above result a similar one about $\mhta$ itself.
Let $\overline{\cQ}_\parone$ denote the quiver with relations associated to the toroidal hyperplane arrangement obtained from $\Bper_\parone$ by quotienting by the translational $\mathfrak{g}^\vee_\Z=\pi_1(\mht)$ symmetry.

\begin{corollary}\label{cor:musheaves-on-L}
There is an equivalence of categories
\[
\mu\Sh_{\LL,\sigma}(\LL)\cong \overline{\cQ}_\parone\dgmod.
\]
\end{corollary}

\begin{proof}
    As the polarization $\widetilde{\sigma}$ on $\tilLL$ is pulled back from $\sigma$ along the $\mathfrak{g}^\vee_\Z$-cover $\tilLL\to \LL,$ the category $\mu\Sh_{\tilLL,\widetilde{\sigma}}(\tilLL)$ has a $\mathfrak{g}^\vee_\Z$ action, and $\mu\Sh_{\LL,\sigma}(\LL)$ is the invariant category for this action. Under the equivalence \eqref{eq:equivalence-for-periodic-musheaves}, this $\mathfrak{g}^\vee_\Z$ action corresponds to the action on $\cQ_\parone\dgmod$ induced by $\mathfrak{g}^\vee_\Z$ actions on the quiver $\cQ_\parone.$ This is a free action, with invariant category $(\cQ_\parone\dgmod)^{\mathfrak{g}^\vee_\Z}\cong \overline{\cQ}_\parone\dgmod.$
\end{proof}

\begin{corollary}\label{cor:Fuk-Q}
There is an equivalence of dg categories
    \[
    \Fuk(\mht_{(\parone,\tone)}) \cong \overline{\cQ}_\parone\dgperf.
    \]
\end{corollary}
\begin{proof}
In Proposition \ref{prop:toricskel}, we realized $\LL$ as a Lagrangian skeleton for $\mht_{(\parone,1)}.$ From Theorem \ref{thm:gps-main} and Lemma \ref{lem:colim-to-lim}, we conclude that the Fukaya category is equivalent to compact objects in the category of microlocal sheaves on the Lagrangian skeleton $\LL$ (where both Fukaya category and microsheaves are understood with respect to polarization data $\sigma$). The result now follows from Corollary \ref{cor:musheaves-on-L} by passing to compact objects.
\end{proof}

This is one point where we can make a connection to the constructions of the first part of this paper \cite{McBW}.  We constructed an algebra $H^{\parone}_\Z$ in \cite[\S 3.9]{McBW}, which is a quotient of the path algebra of the same quiver $\overline{\cQ}_\parone$, but with the Laurent polynomial ring $\C[\pi_1(G)]$  replaced by the polynomial ring $\operatorname{Sym}^{\bullet}(\mathfrak{g})$ and with different relations \cite[(3.10a–3.10c)]{McBW}.  While this algebra is different from $A_{\parone}$, we can compare our results by showing they coincide after a completion.  

\begin{definition}\label{def:hats}
    Let $\widehat{A}_{\parone}$ be the completion of this  algebra by the 2-sided ideal generated by $g-1$ for all $g\in \C[\pi_1(T)]$.  Let $\widehat{H}^{\parone}_\Z$ be the completion of this algebra by the 2-sided ideal generated by $\ft$.
\end{definition}

\begin{proposition}\label{prop:completed isomorphism}
    The algebras $\widehat{A}_{\parone}$ and $\widehat{H}^{\parone}_\C$ are isomorphic.
\end{proposition}
\begin{proof}
    First, note that we have an isomorphism $\varphi$ between the completions $\operatorname{Sym}^{\bullet}(\mathfrak{g})$ and $\widehat{\C[\pi_1(G)]}$.  The former is generated by elements $\mathsf{s}_i$ which are the image of the usual basis in the Lie algebra $\mathfrak{d}$ of diagonal matrices, and latter by loops $\gamma_i$  which are images of coordinate subtori of dimension 1.  This isomorphism sends: \[\varphi(\mathsf{s}_i)= \log \gamma_i=(\gamma_i-1)-\frac{(\gamma_i-1)^2}{2}+\]
    using the Taylor expansion of $\log(x)$ at $x=1$.  Indeed, this matches the linear relations in $\mathfrak{g}$ as a quotient of $\mathfrak{D}$ to the multiplicative relations of $\pi_1(G)$ as a quotient of $\pi_1(D)$.  Note that $\frac{\log \gamma_i}{\gamma_i-1}$ is an invertible element of the completion, given by the Taylor expansion of the function $\frac{\log x}{x-1}=1-\frac{x}{2}+\frac{x^2}{3!}-\cdots $ at $x=1$, or more generally any real power $\left(\frac{\log \gamma_i}{\gamma_i-1}\right)^p$ for $p\in \R$, using the binomial expansion.  

    Thus, now we need to match the length 1 paths $c_{\delta,\delta'}$ in $\widehat{H}^{\parone}_\C$ to the paths $u_{\delta,\delta'}$ in $\widehat{A}_{\parone}$.  Compare the relation $c_{\delta,\delta'}c_{\delta',\delta}=\mathsf{s}_i$ from \cite[(3.10a)]{McBW} with the relation $u_{\delta,\delta'}u_{\delta',\delta}=\gamma_i-1$ of Definition \ref{def:perv-tor}(1). These don't match perfectly, but this is easy to fix. We let
    \[\varphi(c_{\delta,\delta'})=u_{\delta,\delta'}\left(\frac{\log \gamma_i}{\gamma_i-1}\right)^{1/2}.\] The relations are compatible, since 
    \[\varphi(c_{\delta,\delta'})\varphi(c_{\delta',\delta})=u_{\delta,\delta'}u_{\delta',\delta}\frac{\log \gamma_i}{\gamma_i-1}=\log \gamma_i=\varphi(\mathsf{s}_i)=\varphi(c_{\delta,\delta'}c_{\delta',\delta})\]
   The relations \cite[(3.10b--c)]{McBW} exactly match Definition \ref{def:perv-tor}(2).

   Thus $\varphi$ defines a homomorphism, and we can easily write down its inverse:
   \[\varphi^{-1}(\gamma_i)=\exp(\mathsf{s}_i)\qquad \varphi^{-1}(u_{\delta,\delta'})=c_{\delta,\delta'}\left(\frac{\exp(\mathsf{s}_i)-1}{\mathsf{s}_i}\right)^{1/2}.\qedhere\]
\end{proof}


\begin{example}
We spell out this construction when the underlying sequence of tori is $1 \to \C^{\times} \to \C^{\times}$. Thus $\mht_{(\parone,\tone)} = \mht_{(\tone,\partwo)} = (T^*\C)^{\circ}$, whereas the Dolbeault hypertoric variety is the Tate curve. (Since $T$ is trivial, $\parone=\partwo=\tone.$) As previously explained, $\tilLL$ is an infinite chain of rational curves, with $0$ of one link intersecting nodally with $\infty$ of the next. Thus $\cQ$ has vertices indexed by $n \in \Z$, each carrying a vector space $V_n$ on which the group algebra $\C[\pi_1(\C^{\times})] = \C[\Z]$ acts with generator $\gamma_n$. Between neighboring vertices, we have a pair of arrows  
$u_{n, n+1}:V_{n} \rightleftarrows V_{n+1}:v_{n,n+1}$, and an equality $1 + v_{n,n+1}u_{n,n+1} = \gamma_n$. 
This is equivalent to the category of modules over the multiplicative preprojective algebra of the infinite quiver 
\[
\ldots \to \bullet \to \bullet\to \bullet\to \cdots.
\]
Any such module may be viewed as the global sections of a $\C^{\times}$-equivariant coherent sheaf on $(T^*\C)^{\circ}$.

The simple modules $\cS_{n}$ over $\cQ$ are defined by a copy of $\C$ placed at a single vertex $n$, with all maps set to zero. These correspond to $\C^{\times}$-equivariant skyscraper sheaves at $x=y=0$. 

On the other hand, the projective object $\cP_n$ has $V_i = \C[\gamma_i]$. The maps $v_{i,i+1}$, $i < n$ and $u_{i,i+1}, i \geq n$ are given by the natural isomorphism taking $\gamma_i$ to $\gamma_{i+1}$. The maps in the reverse direction are fixed by the quiver relations. $\cP_n$ corresponds on the B-side to a line bundle whose global sections are the free graded $\C[x,y]\left[\frac{1}{1+yx}\right]$-module generated by $x^n$ for $n>0$ or $y^n$ for $n<0$.
\end{example}
\begin{example}
Now consider the sequence  $\C^{\times} \to (\C^{\times})^2 \to \C^{\times}$, where the first map is the diagonal embedding. In this case the $\C^{\times}$-valued moment map on $\mht_{\tone,\partwo)}$ expresses it as a $\C^{\times}$-fibration over $\C^{\times} \setminus 1$, whereas the fiber over $1$ is the ``TIE fighter'' given by a $\mathbb{P}^1$ nodally intersecting two copies of $\mathbb{A}^1$ at $0$ and $\infty$. On the other hand, $\mht_{(\parone,\tone)}$ is affine; its moment fibration has two singular fibers, each given by a union of two nodally intersecting copies of $\mathbb{A}^1$. The unwrapped skeleton $\tilLL$ is as before, but the lattice action on it now shifts the chain by two links rather than one, so that $\LL$ is a copy of two spheres meeting each other transversally at two points.

Repeating the calculation as above, we find that the category of microlocal sheaves along $\tilLL$ is the category of modules over the multiplicative preprojective algebra associated to the $\hat{A}_1$ quiver. Recall that the McKay correspondence of \cite{KV00} identifies modules over the additive $\hat{A}_1$ preprojective algebra with the category of coherent sheaves on the stack $\C^2/\Z/2$ (which we should think of as a noncommutative resolution of its singular coarse moduli space); similar arguments identify modules over the multiplicative preprojective algebra with coherent sheaves on the stack/noncommutative resolution $\TAo/\Z/2,$ which is $\mht_{(\tone,\partwo)}.$

More details of the calculation of this multiplicative preprojective algebra in the setting of Fukaya categories and microlocal perverse sheaves can be found in \cite{etgu2017} and \cite{BK16}, respectively.
 \end{example}

\section{Tilting bundles and coherent sheaves}\hfill\label{sec:Bside}

We will now calculate the B-model category associated to the mirror of $\mht_{(\parone,\tone)}$: this is the dg category $\Cohdg(\mht_{(\tone,\parone)})$ of coherent sheaves on the multiplicative hypertoric variety $\mht_{(\tone,\parone)}.$  As discussed before, we factor $\parone\in T^\vee$ as a product   $\Bfield\cdot \exp(\GIT)$. $\GIT$ will play the r\^ole of  GIT parameter, whereas $\Bfield$ will play the r\^ole of B-field, indexing a noncommutative resolution -- or, in the case where $\GIT$ is generic, indexing an Azumaya algebra on the resolution specified by $\GIT.$

\subsection{Noncommutative resolutions and mirror symmetry}\label{subsec:bfield}
Recall the notion of a {\bf noncommutative crepant resolution} (NCCR) of an affine Gorenstein variety $X=\Spec R$, originally defined in \cite{vdB04}: this is an algebra $A=\End_R(M),$ for some reflexive $R$-module $M$, such that $A$ is a Cohen-Macaulay R-module and the global dimension of $A$ is equal to $\dim X.$  This notion generalizes to a non-affine scheme in an obvious way: we consider a coherent sheaf of algebras $\mathscr{A}$ with module $\mathscr{M}$ such that the restriction to any affine open set is a NCCR.

In this paper, we will explicitly construct a NCCR for each generic choice of B-field $\Bfield$.  For simplicity, we will do this first in the case where the parameter is of the form $(\tone,\Bfield)$.  In this case, the underlying variety $\mht_{(\tone,\tone)}$ is affine and highly singular. 

Recall that for a choice of  $\logB\in \ft^\vee_\R$ (from which we can produce an element $\Bfield=\exp(\logB)\in T^\vee_\R),$ we have already described a quiver with relations $\overline{\cQ}_\Bfield$.  This has only finitely many nodes, so we can think of its path algebra $A_\Bfield=A(\overline{\cQ}_\Bfield)$ as an algebra with unit given by the sum of idempotents $e_p$ for the different chambers $p$ of top dimension in the quotient arrangement $\Btor$.
\begin{theorem}\hfill\label{th:nc-res}
\begin{enumerate}
    \item We have an isomorphism of algebras $\C[\mht_{(\tone,\tone)}]\cong e_pA_\Bfield e_p$ for any chamber $p$.
    \item The algebra $A$ acting on the module $M=A_\Bfield e_p$ is a noncommutative crepant resolution of $\mht_{(\tone,\tone)}$.  
\end{enumerate}
\end{theorem}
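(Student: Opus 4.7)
The plan is to realize $A_\Bfield$ as the endomorphism algebra of a tilting bundle on a genuine smooth resolution, and read off both parts of the theorem from this identification. Concretely, I would choose a generic GIT parameter $\GIT \in i\ft^\vee_\R$ (so that $\partwo = \Bfield \exp(\GIT)$), obtaining a projective crepant resolution $\pi \colon \mht_{(\tone,\partwo)} \to \mht_{(\tone,\tone)}$, and construct a tilting bundle $\cT = \bigoplus_p \cL_p$ whose summands are line bundles indexed by the top-dimensional chambers $p$ of the quotient arrangement $\Btor$, with the chamber containing the origin contributing the structure sheaf summand $\cO$ and picking out the idempotent $e_p$.

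Granted this tilting bundle and an identification $\End(\cT) \cong A_\Bfield$, part (1) is immediate. Since $\pi$ is a projective crepant resolution of an affine normal Gorenstein variety, $R^i\pi_*\cO = 0$ for $i > 0$, and hence
\[
e_p A_\Bfield e_p \;=\; \End_{\mht_{(\tone,\partwo)}}(\cO) \;=\; H^0(\mht_{(\tone,\partwo)}, \cO) \;=\; \C[\mht_{(\tone,\tone)}].
\]
For part (2), set $R = \C[\mht_{(\tone,\tone)}]$ and $M = \Gamma(\mht_{(\tone,\partwo)}, \cT)$; then $M \cong A_\Bfield e_p$ as a right $R$-module, and $A_\Bfield = \End_R(M)$. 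Reflexivity of $M$ follows from reflexivity of each line bundle summand on a smooth crepant resolution of a Gorenstein singularity. The derived equivalence $D^b\Coh(\mht_{(\tone,\partwo)}) \simeq D^b(A_\Bfield\mmod)$ produced by $\cT$ then transports smoothness and the Cohen--Macaulay structure sheaf on the symplectic resolution to finite global dimension equal to $\dim \mht_{(\tone,\tone)}$ and the Cohen--Macaulay property of $A_\Bfield$ as an $R$-module, yielding the NCCR axioms.

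The technical heart of the argument is constructing $\cT$, verifying the Ext-vanishings $\Ext^i(\cL_p, \cL_q) = 0$ for $i > 0$, and matching $\End(\cT)$ with the combinatorially defined algebra of Definition~\ref{def:perv-tor}. The natural strategy, following Kaledin and the approach used in \cite{Stadnik, McBW} for additive hypertorics and \cite{Ganev18, Cooney16} for multiplicative ones, is to quantize $\mht_{(\tone,\partwo)}$ in characteristic $p$ by a sheaf of Azumaya algebras with $\Bfield$ playing the role of the noncommutative moment map parameter, deduce derived affinity (and hence the Ext-vanishings) in characteristic $p$, and lift to characteristic $0$. The main remaining check is that the composition of the pairs of opposite arrows arising from adjacent chambers across a facet $F$ realizes the monodromy $\gamma_F$ around the corresponding toric divisor and that the codimension-two compatibilities of Definition~\ref{def:perv-tor} are satisfied; both reduce, by formal completion along the strata of the residual moment map to $G^\vee$, to the local model of a pair of affine lines meeting nodally, where the calculation is standard.
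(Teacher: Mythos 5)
Your high-level strategy---build a tilting bundle $\cT = \bigoplus_p \cL_p$ on a commutative resolution $\mht_{(\tone,\partwo)}$ with generic $\GIT$, identify $\End(\cT)$ with $A_\Bfield$, and read off the NCCR from Lemma~\ref{lem:tilt-nccr}---matches the paper's outline. Part (1) of your argument (global sections of $\cO$ on a crepant resolution compute the coordinate ring of the base) is fine, and the reduction of part (2) to the tilting and $\End$-identification steps is what the paper does too.

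The gap is in your ``technical heart.'' You propose to establish the Ext-vanishings for $\cT$ by quantizing $\mht_{(\tone,\partwo)}$ directly in characteristic~$p$, exactly as one does in the additive case. But the paper explicitly flags that this procedure does \emph{not} carry over: in the additive case, the characteristic-$p$ Azumaya algebra splits on formal neighborhoods of moment-map fibers, and the contracting $\C^\times$-action on $\aht$ lets one propagate the splitting over the zero fiber to the whole space, yielding a globally split Azumaya algebra whose Morita bundle \emph{is} the tilting generator. Multiplicative hypertoric varieties have no such contracting $\C^\times$-action, so the local splittings cannot be glued, and the characteristic-$p$ argument gives you an Azumaya algebra whose derived affinity says nothing directly about $\Ext^i(\cL_p,\cL_q)$. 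Your proposal thus proves vanishings for an object you have not related to $\cT$.

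The paper circumvents this by a different mechanism: rather than reproving tilting ``intrinsically'' for the multiplicative variety, it invokes Kaledin's criterion that tilting can be checked on formal completions along fibers of $\mmm_{\C^\times}\colon\mht\to G^\vee$, and then uses the formal comparison isomorphism of Theorem~\ref{thm:complete-iso} to identify each such completion with a completion of the additive variety $\aht$, where the bundle $\ta_{\logB}$ is already known to be tilting by \cite[Prop.~3.36]{McBW}. In other words, the multiplicative statement is bootstrapped from the additive one via the fiberwise isomorphism, not re-derived from characteristic-$p$ quantization of $\mht$ itself. Similarly, the paper's identification $\End(\tm_{\logB})\cong A_\Bfield$ (Theorem~\ref{th:tiltingiso}) is proved by a direct global construction of the map and a surjectivity check using the codimension-$\geq 2$ unstable locus, not by formal-local reduction to nodal curves as you suggest; your local route might be made to work, but you would still need to globalize, and it is not what the paper does. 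If you want to salvage your characteristic-$p$ outline, you would need to supply the missing global splitting of the Azumaya algebra on $\mht$---the very ingredient the paper identifies as unavailable---or else reorganize as the paper does around Kaledin's local criterion plus Theorem~\ref{thm:complete-iso}.
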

This theorem is effectively the main theorem of our paper, since it identifies the Fukaya category of $\mht_{(\beta,\tone)}$ with the category of ``coherent sheaves'' on this noncommutative resolution. However, we are not yet ready to prove it.  In fact, rather than showing directly that this algebra $A$ has the desired properties, we will show that it is derived equivalent to a usual commutative resolution of singularities obtained by varying the parameter $\GIT$. 
 
 More precisely, assume  that we have a (usual commutative) crepant resolution of singularities $\pi\colon Y\to X$.  A {\bf D-equivalence} between $Y$ and a noncommutative resolution $A$ is an equivalence of dg-categories $\Cohdg(Y)\cong A\dgmod$.  

A standard way for these to arise is through a {\bf tilting generator}, a locally free sheaf $\mathcal{T}$ on a scheme $Y$ such that  $\Ext^\bullet(\mathcal{T},-)$ induces an equivalence of derived categories between $\End(\mathcal{T})\dgmod$ and $\Cohdg(Y)$; note that as a consequence of this definition, we have have that $\Ext^i(\mathcal{T},\mathcal{T})=0$ for $i>0$ and $\langle\mathcal{T}\rangle=\Cohdg(Y)$, which is the definition given in \cite[Def. 1.1]{KalDEQ}.  
The following is a corollary of \cite[Lem. 3.2.9 \& Prop. 3.2.10]{van2004three}:
\begin{lemma}\label{lem:tilt-nccr}
Suppose $\mathcal{T}$ is a tilting generator on $Y$ such that the structure sheaf $\mathcal{O}_Y$ is a summand of $\mathcal{T}$, and let $M=\Gamma(Y;\mathcal{T}).$ Then $A=\End_{\Coh(Y)}(\mathcal{T})\cong \End_R(M)$ is a noncommutative crepant resolution of singularities, canonically D-equivalent to $Y$.  
\end{lemma}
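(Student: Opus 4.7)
The plan is to extract this statement from Van den Bergh's paper, with the one-line D-equivalence observation reducing matters to verifying the NCCR axioms in our setup. The derived equivalence $\Cohdg(Y)\cong A\dgmod$ is, by definition, the statement that $\mathcal{T}$ is a tilting generator: the functor $\RHom_Y(\mathcal{T},-)$ is the equivalence, with quasi-inverse given by $(-)\otimes_A^{\mathbb{L}}\mathcal{T}$. So only the NCCR claim itself requires argument.

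The first substantive step is to identify $A=\End_{\Coh(Y)}(\mathcal{T})$ with $\End_R(M)$, where $M=\Gamma(Y,\mathcal{T})$. Since $\pi\colon Y\to X=\Spec R$ is crepant and $Y$ is smooth, $Y$ has rational singularities, so $R^i\pi_*\mathcal{O}_Y=0$ for $i>0$; combined with the tilting vanishing $\Ext^i_Y(\mathcal{T},\mathcal{T})=0$ for $i>0$, the local-to-global spectral sequence collapses to give $A\cong \Gamma(X,\pi_*\mathcal{H}om(\mathcal{T},\mathcal{T}))$. The sheaf $\pi_*\mathcal{T}$ is reflexive as an $R$-module (locally free on $Y$ smooth, with $\pi$ an isomorphism in codimension one since it is crepant), so $\pi_*\mathcal{H}om(\mathcal{T},\mathcal{T})\cong\mathcal{H}om_R(\pi_*\mathcal{T},\pi_*\mathcal{T})$ by reflexive-Hom arguments, yielding $A\cong \End_R(M)$.

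Next, the Cohen-Macaulay property of $A$ as an $R$-module and the equality of global dimension with $\dim X$ are exactly \cite[Lem.~3.2.9]{van2004three} and \cite[Prop.~3.2.10]{van2004three}, applied to our tilting generator on a crepant resolution. The hypothesis that $\mathcal{O}_Y$ is a direct summand of $\mathcal{T}$ is used in two places: it ensures $R$ is a summand of $M$ (so that $M$ is a generator in the sense required to get a true non-commutative resolution rather than a Morita-trivial variant), and it guarantees that the tilting equivalence sends the structure sheaf to a summand of the free module $A$, compatibly with the statement that the D-equivalence is canonical.

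The main potential obstacle is the identification $A\cong \End_R(M)$: this is the only place where the geometry of $\pi$ enters nontrivially, and it hinges on the reflexivity of $\pi_*\mathcal{T}$ together with crepancy to control codimension-one behavior. Once this is in hand, the remaining NCCR axioms are imported verbatim from \cite{van2004three} and the proof is complete.
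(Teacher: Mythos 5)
Your approach matches the paper's exactly: the paper does not prove this lemma at all, stating it as a direct corollary of \cite[Lem.~3.2.9 \& Prop.~3.2.10]{van2004three}, which is precisely the citation you center your argument on. Your unpacking of the cited results is a reasonable gloss, with two small imprecisions worth flagging. First, you write that ``$Y$ has rational singularities, so $R^i\pi_*\mathcal{O}_Y=0$''; the statement you need is that $X$ has rational singularities (equivalently, that $\pi$ is a rational resolution), which does follow from crepancy plus Gorenstein via Elkik, but is not the same as saying $Y$ (which is smooth, hence trivially has rational singularities) does. Second, the vanishing $R^i\pi_*\mathcal{O}_Y=0$ is not actually what feeds the identification $A\cong\End_R(M)$: since $\mathcal{T}$ is locally free the local-to-global spectral sequence gives $H^p(Y,\mathcal{H}om(\mathcal{T},\mathcal{T}))\cong\Ext^p_Y(\mathcal{T},\mathcal{T})$ directly, and the $p=0$ identification $A=\Gamma(X,\pi_*\mathcal{H}om(\mathcal{T},\mathcal{T}))$ is tautological from affineness of $X$; what does the work is the reflexive-Hom comparison $\pi_*\mathcal{H}om(\mathcal{T},\mathcal{T})\cong\mathcal{H}om_R(\pi_*\mathcal{T},\pi_*\mathcal{T})$, which you correctly identify as the crux and which relies on $\pi$ being an isomorphism in codimension one. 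Neither slip is fatal, and the overall route is the one the paper intends.
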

We will prove Theorem \ref{th:nc-res} using this lemma, by constructing an appropriate tilting generator (see Theorem \ref{th:tiltingiso}).

\subsection{Comparison of multiplicative and additive varieties}
As mentioned before, a key component of the proof of  Theorem \ref{th:nc-res} is that multiplicative hypertoric varieties  ``locally look like additive hypertoric varieties.'' More precisely, additive and multiplicative hypertoric varieties have complex moment maps with respective targets $\mathfrak{g}^\vee$ and $G^\vee,$ and we will show that formal neighborhoods of the fibers of these maps agree.
  
  We identify $\Z^n$ with the cocharacter lattice of $D$ by the map sending $\Ba\in \Z^n$ to the cocharacter $s\mapsto \operatorname{diag}(s^{a_1},\dots, s^{a_n})\colon\mathbb{G}_m\to D$.
  Under this identification, the standard characters $\ep_i$ of $D^\vee$ are sent to the unit vectors of $\Z$.  We can also consider these as functions on $D^\vee$, and for a given $h\in D^\vee$, we denote the values of these by $h_i$. 
   
 Assume now that $h\in D^\vee$ is contained in the subtorus $G^{\vee}$. The fact that $h$ lies in $G^\vee$ is reflected by the relation $\prod_ih_i^{a_i}=1$ for $\Ba\in\ft_\Z\subset \Z^n$.
  
  Consider the formal neighborhood $U$ of $h$ inside $G^\vee$, and let $\widehat{\mht}_h$ be the preimage of this neighborhood inside $\mht$.  In particular, any power series in the functions $\sz_i\sw_i+1-h_i$ is well-defined as a function on this completion.  
  
  Let $\aht=\aht_{(\tone,\GIT)}$ be the additive hypertoric variety associated to the data of the exact sequence \eqref{eq:basictoriseq} and $(\tone,\GIT)\in \mathfrak{t}^\vee_\mathbb{H}$; in order to avoid confusion, we will use $\sx_i,\sy_i,$ rather than $\sz_i,\sw_i,$ as variables for the additive hypertoric variety.  Let $\log(h)\in \fg^\vee$ be a choice of preimage of $h$ under the  exponential map, such that $\log(h_i)=0$ if $h_i = 1$; implicitly, this fixes choices of $\log(h_i)$ for all $i$, such that $\sum a_i\log h_i=0$ for all $\Ba\in \ft_\Z$.

  Let $\widehat{\aht}_{\log(h)}$ be the base change of $\aht$
  to a formal neighborhood of $\log(h)\in \fg^\vee$.  Let $\ml_\chi$ be the line bundle on $\mht$ induced by a character $\chi:T\to \mathbb{G}_m$ and $\al_\chi$ be the correspond line bundle on $\aht$. 
  \begin{theorem}\label{thm:complete-iso}
      The formal neighborhoods $\widehat{\mht}_h$ and $\widehat{\aht}_{\log(h)}$ are isomorphic, by an isomorphism identifying the character line bundles $\ml_\chi$ and  $\al_{\chi}$.
  \end{theorem}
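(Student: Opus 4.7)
The plan is to construct an explicit $D$-equivariant isomorphism on the prequotient level --- between a formal neighborhood of $\mu^{-1}(\log h)$ in $T^*\C^n$ and a formal neighborhood of $\mmm_{\C^\times}^{-1}(h)$ in $\TAno$ --- and then to descend to the GIT quotients by $T$. The construction is coordinatewise, exploiting that the multiplicative equation $\sz_i\sw_i + 1 = h_i$ and the additive equation $\sx_i\sy_i = \log h_i$ become identified to all orders by the exponential map $\exp\colon \fd^\vee \to D^\vee$.

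First I would fix, for each $i$, a formal power series $\psi_i(u) \in \C[[u-\log h_i]]$ which is a square root of the unit power series $(\exp u - 1)/u$ expanded around $u = \log h_i$. Such a square root exists: at $u = \log h_i$ this series takes the nonzero value $(h_i-1)/\log h_i$ (interpreted as $1$ by L'H\^opital when $h_i = 1$), and any complete local $\C$-algebra admits square roots of units after a single choice of sign. I then define
\[
\Phi\colon (\sx_i,\sy_i) \longmapsto (\sz_i,\sw_i) := \bigl(\sx_i\,\psi_i(\sx_i\sy_i),\ \sy_i\,\psi_i(\sx_i\sy_i)\bigr),
\]
which satisfies $\sz_i \sw_i + 1 = \exp(\sx_i\sy_i)$ by construction and hence intertwines $\mu$ with $\mmm_{\C^\times}$ under $\exp$. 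Since $\psi_i(\sx_i\sy_i)$ is $D$-invariant, $\Phi$ is $D$-equivariant, and since $\psi_i$ is a unit series $\Phi$ is invertible as a map of formal schemes. A short direct calculation also gives $d\sz_i \wedge d\sw_i / (1 + \sz_i\sw_i) = d\sx_i \wedge d\sy_i$, so $\Phi$ matches the holomorphic symplectic forms as well.

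To conclude, I would restrict $\Phi$ to the preimage of $\fg^\vee \subset \fd^\vee$ (i.e.\ the $T$-moment map zero level) and descend by the GIT quotient at the character $\GIT$. The result is $G$-equivariant, identifies fibers over $\log h$ and $h$, and hence yields the isomorphism $\widehat{\aht}_{\log h} \cong \widehat{\mht}_h$. For the line bundle comparison, both $\al_\chi$ and $\ml_\chi$ are obtained as the descent of the trivial bundle on the respective prequotients with $T$-action twisted by $\chi$, so the $T$-equivariance of $\Phi$ canonically identifies them.

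The main obstacle I anticipate is bookkeeping for the descent, in particular the fact that the multiplicative reduction is quasi-Hamiltonian while the additive reduction is Hamiltonian, so that the two GIT problems live in slightly different formal frameworks. Since the two holomorphic symplectic structures agree under $\Phi$ and since semistability at parameter $\GIT$ depends only on the $T$-action (which is preserved by $\Phi$), the requisite compatibility is essentially forced; nevertheless, writing down the correct formal-scheme statement so that the quasi-Hamiltonian and Hamiltonian quotients are unambiguously identified requires some care.
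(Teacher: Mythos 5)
Your proof is correct and follows essentially the same strategy as the paper's: construct an explicit $D$-equivariant coordinate change on the prequotient that identifies the additive complex moment map with the logarithm of the multiplicative one, then descend to GIT quotients and character line bundles by equivariance. The only substantive difference is the formula --- the paper avoids extracting a square root by using the asymmetric substitution $\sx_i\mapsto\sz_i$, $\sy_i\mapsto\sw_i\cdot\log(1+\sz_i\sw_i)/(\sz_i\sw_i)$, whereas your symmetric version $(\sz_i,\sw_i)=(\sx_i\psi_i,\sy_i\psi_i)$ splits the same unit and works equally well --- and your closing worry about reconciling the quasi-Hamiltonian and Hamiltonian frameworks is unnecessary, since the GIT descent depends only on the $T$-action, the moment-map level set, and the linearization, all three of which your isomorphism identifies.
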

  \begin{proof}
  Recall that the projective coordinate rings of $\aht$ and $\mht$ are $\C[T^*\C^n][t]^T/\langle \mu_\C =0 \rangle$ and $\C[\TAno][t]^T/\langle\mmm_{\C^\times}=0\rangle,$ where $\mu_\C,\mmm_{\C^\times}$ denote the complex moment maps for the $T$-actions, and $t$ is an additional variable of degree $1$ with $T$-weight $-\GIT.$ We will produce isomorphisms between certain completions of these coordinate rings.
  
  Write $\widehat{\TAno}_h$ for the completion of $\TAno$ with respect to the the ideal generated by $\sz_i\sw_i+1-h_i$.
  
      Note that if $h_i\neq 1$, then we have that $\sz_i\sw_i$ is itself invertible, and 
      \begin{align*}
         \log(1+\sz_i\sw_i)&=\log(h_i+(\sz_i\sw_i+1-h_i))\\
         &=\log(h_i)+\frac{\sz_i\sw_i+1-h_i}{h_i}-\frac{(\sz_i\sw_i+1-h_i)^2}{2h_i^2}+\frac{(\sz_i\sw_i+1-h_i)^3}{3h_i^3}-\cdots
      \end{align*} is an invertible function on $\widehat{\TAno}_{h}$.  If $h_i=1$, then 
      \[\log(1+\sz_i\sw_i)=\sz_i\sw_i-\frac{\sz_i^2\sw_i^2}{2}
      +\frac{\sz_i^3\sw_i^3}{3}-\cdots\]
      is still well-defined, but no longer invertible.  However, the quotient  \[\gamma_i:=\frac{\log(1+\sz_i\sw_i)}{\sz_i\sw_i}=1-\frac{\sz_i\sw_i}{2}
      +\frac{\sz_i^2\sw_i^2}{3}-\cdots\] is invertible.
      
      Let $\widehat{T^*\ACn}_{\log{h}}$ be the completion of $T^*\ACn$, now with variables $\sx_i$ and $\sy_i,$ at $\sx_i\sy_i-\log(h_i)$.
      Let $\sx_i,\sy_i$ map to $\C[\widehat{\TAno}_h]$ by 
      \[\sx_i\mapsto \sz_i\qquad \sy_i\mapsto 
      \displaystyle\sw_i\gamma_i. 
     \]
      This map is an isomorphism, and its inverse is given by 
      \[\sz_i\mapsto \sx_i,\qquad \sw_i\mapsto 
      \displaystyle \sy_i\delta_i, \]
      where we define
      \[\delta_i:=\frac{h_i e^{\sx_i\sy_i - \log (h_i)}-1}{\sx_i\sy_i}=
      \frac{h_i-1}{\sx_i \sy_i} + \frac{h_i}{
      \sx_i \sy_i} \sum_{k=1}^\infty \frac{(\sx_i\sy_i - 
      \log(h_i))^k}{k!}.\]
      Furthermore, under these maps, we have 
      \[\sum_{i=1}^na_i\sx_i\sy_i=\sum_{i=1}^na_i\log(1+\sz_i\sw_i)=\log\left(\prod_{i=1}^n(1+\sz_i\sw_i)^{a_i}\right),\]
      so that these isomorphisms intertwine the additive and multiplicative moment maps.  Furthermore, since $\gamma_i$ and $\delta_i$ are $D$-invariant, this is an equivariant isomorphism.
      
      Thus, we obtain an isomorphism of projective coordinate rings for $\widehat{\mht}_h$ and $\widehat{\aht}_{\log(h)}$, which moreover must match the modules corresponding to the line bundles $\ml_\chi$ and $\al_\chi$.
  \end{proof}

\subsection{Construction of the tilting bundle}\label{sec:construction-tilting}
In view of Lemma \ref{lem:tilt-nccr}, we can complete the proof of Theorem \ref{th:nc-res} by constructing a tilting generator on $\mht_{(\tone,\partwo)}$ for $\partwo$ generic.  
This has already been carried out in the additive case, implicitly in \cite{Stadnik} and explicitly in \cite[Theorem 3.36]{McBW}.   This is accomplished using a now-standard technique in geometric representation theory, \emph{quantization in characteristic p}. 

Roughly speaking, one notices that a cotangent bundle, as for instance $T^*\mathbb{A}^n,$ has a noncommutative deformation given by the sheaf of differential operators on the base; and moreover, in characteristic $p$, this sheaf $\mathcal{D}$ is Azumaya over its center, which is actually (a Frobenius twist of) the original cotangent bundle. If the  parameter $\Bfield$ is $p$-torsion, it may then be used as a noncommutative moment map parameter for a quantum Hamiltonian reduction, after which one obtains an Azumaya algebra $\mathscr{A}$ on the Hamiltonian reduction $\aht$ of $T^*\mathbb{A}^n,$ as desired. We would like write $\mathscr{A}$ as the endomorphism sheaf of a vector bundle, i.e. to show that it is split.  Based on some homological properties of the Azumaya algebra {\it if} a splitting bundle were to exist, it would be necessarily tilting by \cite[Lem. 2.7]{KalDEQ}, and for generic $\Bfield$, it would be a tilting generator by \cite[Prop. 4.2]{KalDEQ}.  
No such splitting bundle exists, but the restriction of $\mathscr{A}$ to a formal neighborhood of the fiber $\mu^{-1}(0)$ does split---it is isomorphic to the endomorphisms of a $\mathbb{G}_m$-equivariant vector bundle $\ta$ on $\aht$, restricted to this formal neighborhood. One can conclude that the vector bundle $\ta$ is a tilting generator by the same arguments as above.  


For the multiplicative case, this method of quantization has an analogue:  instead of deforming to the sheaf of differential operators, one deforms to a sheaf of $q$-\emph{difference} operators. This was accomplished for multiplicative hypertoric varieties in \cite{Cooney16,Ganev18}, and for general multiplicative quiver varieties in \cite{GJS19}.  This allows us to define an Azumaya algebra on $\mht$ as before, and one could easily prove the same infinitesimal splitting property, but there is no contracting $\C^{\times}$-action that allows us to construct a tilting generator via the same automatic process.

Disappointing as this fact is, we can still use it as inspiration to {\it guess} a tilting generator.  In fact, this is not truly a guess; the bundle in question arises naturally when we view $\C[\mht]$ as a multiplicative  Coulomb branch (the Coulomb branch $\mathcal{C}_4$ in the terminology of \cite{teleman2018role}),
and the other nodes in $\overline{\cQ}$ as spectrally-flowed line operators, \emph{i.e.}, as objects of the {\bf extended BFN category} introduced in \cite[\S 3]{WebSD}.

However one chooses to motivate the definition of this tilting generator, it is easy to describe: it is a direct sum of equivariant line bundles $\ml_\chi$, which we enumerate below.
 
Choose a lift $\logB=\log(\Bfield)\in \ft^\vee_\R$.  
Let $\fg^{\vee,\logB}_\R$ be the preimage of $\logB$ under the projection $\fd^\vee_\R\to \ft^\vee_\R$.  Let
 \[ \Delta_{\Bx}=\{\Ba\in \fg^{\vee,\logB}_\R \mid x_i < a_i <
x_i+1\},
\qquad \Lambda(\logB)= \{\Bx\in \mathfrak{d}^\vee_\Z\mid \chamber_\Bx\neq \emptyset\}.\]
These are the chambers and their parametrizing set for a periodic hyperplane arrangement in $\fg^{\vee,\logB}_\R,$ which we denote $\textgoth{B}^{\operatorname{per}}_{{\logB}}$. We write $\overline{\Lambda}(\logB)$ for the quotient of $\Lambda(\logB)$ by the action of $\mathfrak{g}^{\vee}_\Z$. This is a finite set, parametrizing the chambers of the toric hyperplane arrangement $\textgoth{B}^{\operatorname{tor}}_{{\logB}} := \textgoth{B}^{\operatorname{per}}_{{\logB}} / \mathfrak{g}^{\vee}_\Z$.  Note that if we choose $\logB$ differently, then the result will change by an element of $\ft^\vee_\Z$.  

We continue to assume that $\logB$ is generic, so that there is a
neighborhood $U$ of $\logB$ in $\R\otimes \ft^\vee$ such that for
all $\logB'\in U$, we have $\Lambda(\logB)=\Lambda(\logB')$. In
particular, the hyperplanes in
$\textgoth{B}^{\operatorname{per}}_{\logB}$ intersect
generically. 

Consider the following vector bundles, defined on $\mht$ and $\aht,$ respectively:
\[\tm_{\logB}:=\bigoplus_{\chi\in \overline{\Lambda}(\logB)}\ml_{\chi}, \qquad \ta_{\logB}:=\bigoplus_{\chi\in \overline{\Lambda}(\logB)}\al_{\chi}.\]
Note that if we choose a different value of $\logB$ while leaving $\Bfield$ unchanged, the effect will be to tensor $\tm_{\logB}$ with the line bundle corresponding to the element of $\ft^\vee_\Z\cong \operatorname{Pic}(\mht)$ we change our branch of $\log$ by.  In particular, we can always choose $\logB$ so that $\mathbf{0}\in \Lambda(\logB)$, that is, the trivial bundle $\cO_{\mht}$ is a summand of $\tm_{\logB}$.
\begin{theorem}[\mbox{\cite[Prop. 3.36]{McBW}}]
  The vector bundle $\ta_{\logB}$ is a tilting generator of $\aht$, and so $\End(\ta_{\logB})$ gives an NCCR of $\aht_{(0,\tone)}$.  
\end{theorem}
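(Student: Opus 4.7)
The plan is to adapt the now-standard quantization-in-characteristic-$p$ approach to tilting generators on symplectic resolutions, as pioneered by Bezrukavnikov--Mirkovi\'c--Rumynin and Kaledin, and implemented for hypertoric varieties by Stadnik. First I would choose a prime $p$ and an integral form of the data so that the parameter $\logB$ becomes $p$-torsion after reduction; the subtlety that $\logB$ lies in $\ft^\vee_\R$ is handled by tensoring $\ta_{\logB}$ with an element of $\ft^\vee_\Z\cong \operatorname{Pic}(\aht)$ and by varying $p$ over infinitely many primes. Working in characteristic $p$, the ring $\mathcal{D}(\mathbb{A}^n)$ of crystalline differential operators is Azumaya over the Frobenius twist of $T^*\mathbb{A}^n$, and quantum Hamiltonian reduction at noncommutative moment parameter $\logB$ produces a sheaf of Azumaya algebras $\mathcal{A}_p$ on the mod-$p$ reduction of $\aht = \aht_{(0,\GIT)}$.

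Next I would produce a splitting bundle for $\mathcal{A}_p$. The residual real moment map $\mmm_\R\colon \aht\to\mathfrak{g}^\vee_\R$ has fibers preserved by the natural contracting $\C^\times$-action on $\aht$. One first checks that $\mathcal{A}_p$ splits on the formal neighborhood of each fiber: near the zero fiber this reduces to a local calculation at a $T$-fixed point, where $\aht$ looks like an orbifold quotient of affine space and the standard Azumaya splitting of the mod-$p$ Weyl algebra applies. Any splitting $\mathcal{V}_0$ on the formal neighborhood of the zero fiber then extends uniquely and $\C^\times$-equivariantly to a global splitting bundle $\mathcal{V}$ on all of $\aht$ by flowing along the contracting action.

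The key identification is that $\mathcal{V}$ decomposes, as a $G$-equivariant sheaf, as $\bigoplus_{\chi\in \overline{\Lambda}(\logB)}\al_\chi$, up to an overall twist by an element of $\ft^\vee_\Z$. This is where the combinatorics of $\Btor_{\logB}$ genuinely enters: the $G$-weights appearing in the fiber of the splitting bundle over a point of the zero fiber are controlled by weight computations on standard cyclic modules over the reduced Weyl algebra, and these weights turn out to be precisely the lattice points indexing chambers of $\Btor_{\logB}$. Granting this identification, $\mathcal{V}$ induces a Morita equivalence between $\mathcal{A}_p$-modules and $\Coh(\aht)$ in characteristic $p$, and the tilting property of $\ta_{\logB}$ modulo $p$ reduces to the vanishing of higher cohomology of $\mathcal{A}_p$, which is a homological input from quantum Hamiltonian reduction.

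To lift back to characteristic $0$ I would invoke semicontinuity: vanishing of $\Ext^i(\ta_{\logB},\ta_{\logB})$ for $i>0$ is a Zariski-open condition, and similarly derived generation spreads from the mod-$p$ fiber to the generic fiber of the integral form. Combined with the observation that, by an appropriate choice of lift of $\logB$, the structure sheaf $\cO_{\aht}$ is a summand of $\ta_{\logB}$, Lemma~\ref{lem:tilt-nccr} then yields the NCCR statement. The main obstacle will be the $G$-weight identification in the third step: without a concrete combinatorial model for the splitting bundle, this step truly uses the hypertoric structure rather than general symplectic-resolution technology, and it is here where the argument benefits most from the explicit presentation of $\aht$ as a hyperhamiltonian reduction of $T^*\mathbb{A}^n$.
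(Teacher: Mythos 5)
The paper does not prove this theorem itself; it is imported verbatim as \cite[Prop.~3.36]{McBW} from Part~I of the series, and the present paper only sketches the strategy in the surrounding paragraph (quantization in characteristic $p$, Azumaya splitting on formal neighborhoods of moment-map fibers, extension via the contracting $\C^\times$-action, then Lemma~\ref{lem:tilt-nccr}). Your proposal fills in that sketch along precisely the same lines, including the $p$-torsion trick, the $\C^\times$-equivariant extension of the formal splitting from the zero fiber, the identification of the splitting bundle with $\bigoplus_{\chi\in\overline{\Lambda}(\logB)}\al_\chi$, and the semicontinuity lift to characteristic~$0$, so it takes essentially the same route as \cite{McBW}.
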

This motivates the corresponding multiplicative result:
 \begin{theorem} \label{thm:tilting}
 The bundle $\tm_{\logB}$ is a tilting generator for the category $\Coh(\mht_{\tone,\GIT})$.
 \end{theorem}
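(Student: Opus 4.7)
The plan is to reduce this multiplicative tilting statement to the known additive tilting theorem of \cite{McBW} via the formal comparison isomorphism of Theorem~\ref{thm:complete-iso}. Under that comparison, the multiplicative residual moment map $\mmm_{\C^\times}\colon \mht_{(\tone,\GIT)} \to G^\vee$ matches, formal neighborhood by formal neighborhood, with the additive moment map $\mu\colon \aht_{(\tone,\GIT)} \to \fg^\vee$, and the summands $\ml_\chi$ of $\tm_{\logB}$ correspond to the summands $\al_\chi$ of the additive tilting bundle $\ta_{\logB}$. Since $G^\vee$ and $\fg^\vee$ are both affine, Leray collapses on each side, which will let us globalize vanishing from the fiberwise level.

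For the vanishing of positive Ext groups, I would first decompose $\Ext^i(\tm_{\logB}, \tm_{\logB})$ as a direct sum of sheaf cohomologies $H^i(\mht, \ml_{\chi})$ for $\chi$ a difference of elements of $\overline{\Lambda}(\logB)$. Since $G^\vee$ is affine, the Leray spectral sequence collapses to give $H^i(\mht, \ml_\chi) = H^0(G^\vee, R^i\mmm_{\C^\times,*}\ml_\chi)$, so it suffices to show $R^i\mmm_{\C^\times,*}\ml_\chi = 0$ for $i > 0$. A quasi-coherent sheaf on the Noetherian affine $G^\vee$ vanishes iff its pullback to every formal neighborhood of a closed point vanishes; by flat base change this pullback is computed as $R^i\widehat{\mmm}_*(\ml_\chi|_{\widehat{\mht}_h})$, which Theorem~\ref{thm:complete-iso} identifies with $R^i\widehat{\mu}_*(\al_\chi|_{\widehat{\aht}_{\log h}})$. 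The additive tilting theorem, combined with the analogous Leray collapse over $\fg^\vee$, gives $R^i\mu_*\al_\chi = 0$ on $\aht$, whence the same holds after completion.

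For generation, I would argue dually that the right orthogonal of $\tm_{\logB}$ in $\Cohdg(\mht_{(\tone,\GIT)})$ is zero: if $\mathcal{F}$ is a coherent sheaf with $R\Hom(\tm_{\logB}, \mathcal{F}) = 0$, then the same Leray argument yields $R\mmm_{\C^\times,*}(\tm_{\logB}^\vee \otimes \mathcal{F}) = 0$, so its pullback to every $\widehat{G^\vee}_h$ vanishes. Applying Theorem~\ref{thm:complete-iso}, this implies that on the additive side the restriction $\mathcal{F}|_{\widehat{\mht}_h}$ is right-orthogonal to $\ta_{\logB}|_{\widehat{\aht}_{\log h}}$. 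Invoking the generation part of the additive tilting theorem on each completion forces each $\mathcal{F}|_{\widehat{\mht}_h}$ to vanish, and since every closed point of $\mht$ lies in some preimage $\mmm_{\C^\times}^{-1}(h)$, faithful flatness of completion together with Noetherianness of $\mht$ then gives $\mathcal{F} = 0$.

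The principal obstacle lies in the last step: the additive tilting theorem of \cite{McBW} provides generation on the whole of $\aht$, but we need this property fiberwise over the moment map, i.e., after restricting to the formal completion $\widehat{\aht}_{\log h}$. Restriction of a tilting generator to an open or formal subscheme need not automatically remain a generator. The cleanest resolution is to observe that the global Morita equivalence $\Cohdg(\aht) \simeq \End(\ta_{\logB})\dgmod$ is compatible with base change along the (flat) completion of $\mathcal{O}_{\fg^\vee}$ at $\log h$, which upgrades it to a Morita equivalence between $\Cohdg(\widehat{\aht}_{\log h})$ and modules over the completed endomorphism algebra; this in turn yields the required formal generation. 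Carefully setting up this formal base change, and verifying that the non-proper pushforward $R\mmm_{\C^\times,*}$ commutes with completion in the derived sense, is where most of the technical work will go.
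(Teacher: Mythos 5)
The overall reduction strategy you use is the same as the paper's: push the problem down to formal neighborhoods of moment-map fibers, and there transport to the additive side via Theorem~\ref{thm:complete-iso}, where the tilting property is already known from \cite[Prop. 3.36]{McBW}. However, the key lemma you invoke is different, and this difference matters. The paper appeals to Kaledin's criterion \cite[\S 3.1]{KalDEQ}, which characterizes a vector bundle as a tilting generator by two conditions that are both \emph{already} local over the affine base $G^\vee$: vanishing of higher cohomology of $\sEnd(\tm_{\logB})$, and vanishing of the sheaf-theoretic obstruction kernel $\mathcal{K}(\sEnd(\tm_{\logB}))$ from \cite[Def. 3.2]{KalDEQ}. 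Both can be checked on the formal fibers $\widehat{\mht}_h$ with no further work, so after applying Theorem~\ref{thm:complete-iso} the proof is immediate.

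Your argument for the Ext-vanishing half is sound and essentially equivalent to the cohomology-vanishing part of the paper's argument (you phrase it via Leray over $G^\vee$, the paper phrases it via Kaledin, but both come down to vanishing of $R^i\mmm_{\C^\times,*}$ of the relevant Hom sheaves on the completion). The genuine gap is in your generation argument, which you yourself flag. You need $\ta_{\logB}|_{\widehat{\aht}_{\log h}}$ to be a generator of $\Cohdg(\widehat{\aht}_{\log h})$, but \cite{McBW} only asserts global generation on $\aht$. Your proposed fix---that the Morita equivalence base-changes along the flat completion---is precisely the nontrivial content of Kaledin's theorem: the map $\aht \to \fg^\vee$ is not proper, so $R\Hom(\ta_{\logB}, \mathcal{F})$ is not automatically a perfect complex over $\fg^\vee$, and derived base change of the equivalence along $\Spf \widehat{\cO}_{\fg^\vee,\log h} \to \fg^\vee$ is not formal. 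Making that step rigorous would amount to reproving the relevant part of \cite[\S 3.1]{KalDEQ}, so you should simply cite it (as the paper does) rather than attempt to rederive it. Once you swap in Kaledin's criterion for your ad hoc base change argument, your proof and the paper's coincide.
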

 \begin{proof}
 First, let us verify that $H^i\big(\mht_{\tone,\GIT};\mathcal{E}nd(\tm_{\logB})\big)=0$ for $i>0$.  We compute this by first pushing forward $\mathcal{E}nd(\tm_{\logB})$ by the map $\mmm_{\C^\times}$.  Since tensoring with the completion of $\C[G^\vee]$ at $\gamma$ is exact, flat base change shows that the completion of $\mathbb{R}^i(\mmm_{\C^\times})_*(\mathcal{E}nd(\tm_{\logB}))$ is the same as the $i$th cohomology sheaf of the pullback of $\mathcal{E}nd(\tm_{\logB})$ to a formal neighborhood of the fiber.  That is, it suffices to prove this fact on the completion $\widehat{\mht}_h$ for all $h\in G^\vee$.  By Theorem \ref{thm:complete-iso}, we can transport this bundle to $\widehat{\aht}_{\log(h)}$ and obtain the completions of the line bundles $\al_{\chi}$ for $\chi\in \bar{\Lambda}$.  By \cite[Prop. 3.36]{McBW}, the corresponding sheaf $\ta_{\logB}$ on $\aht$ is a tilting generator, and so we have the desired cohomology vanishing.
 
 Now, we only need to check that this vector bundle generates the derived category.  Here we will use techniques from \cite{KalDEQ}.  In order to translate between that paper and this one, note that we will use freely throughout the Morita equivalence between coherent sheaves and right $\mathcal{E}nd(\tm_{\logB})$-modules induced by $M\mapsto M\otimes(\tm_{\logB})^\vee$, so any place where Kaledin uses sections, we should apply sections after using this functor to turn a coherent sheaf into a right $\mathcal{E}nd(\tm_{\logB})$-module.  
 
This will, of course, be the case if the functor \begin{equation}\label{eq:push-pull-1} 
 	M\mapsto \mathbb{R}\Gamma(\mht_{\tone,\GIT}; M\otimes(\tm_{\logB})^\vee)\overset{L}\otimes_{\End(\tm_{\logB})}\tm_{\logB}
 \end{equation}  of taking derived sections and then localizing is an equivalence, since this immediately shows that if $\mathbb{R}\Gamma(\mht_{\tone,\GIT}; M\otimes(\tm_{\logB})^\vee)=0$, then $M=0$.  Consider the sheaf of algebras on $G^\vee$ given by $E_{G^{\vee}}=(\mmm_{\C^\times})_*(\mathcal{E}nd(\tm_{\logB}))$.  Since $G^\vee$ is affine, we have that 
 \begin{equation}\label{eq:push-pull-2} 
 	\mathbb{R}\Gamma(\mht_{\tone,\GIT}; M\otimes(\tm_{\logB})^\vee)\overset{L}\otimes_{\End(\tm_{\logB})}\tm_{\logB}\cong (\mathbb{R}\mmm_{\C^\times})_* M\otimes(\tm_{\logB})^\vee \overset{L}\otimes_{E_{G^{\vee}}}\tm_{\logB}.
 \end{equation} 
 
 This functor \eqref{eq:push-pull-1} is given by Fourier-Mukai transform with a kernel on $\mht_{\tone,\GIT}\times \mht_{\tone,\GIT}$ which Kaledin denotes $\mathcal{M}^\bullet$ in \cite[Lem. 3.1]{KalDEQ}; this is the localization of $\End(\tm_{\logB})$ considered as a bimodule over itself. 
 By \eqref{eq:push-pull-2}, this kernel is supported on $\mht_{\tone,\GIT}\times_{G^\vee} \mht_{\tone,\GIT}$. 
 
 The counit of the adjunction induces a map $\mathcal{M}^\bullet\mapsto \mathcal{O}_{\Delta}$ to the structure sheaf of the diagonal $\Delta \subset \mht_{\tone,\GIT}\times \mht_{\tone,\GIT}$, whose cone we denote $\mathcal{K}^{\bullet}$.  As observed by Kaledin in \cite[Def. 3.1]{KalDEQ}, $\mathcal{K}^{\bullet}=0$ if and only if $\tm_{\logB}$ is a tilting generator.  Using flatness again, the functors
 \[M\mapsto  (\mathbb{R}\mmm_{\C^\times})_* M\otimes(\tm_{\logB})^\vee\qquad N\mapsto N\overset{L}\otimes_{E_{G^{\vee}}}\tm_{\logB}\]
 commute with restriction to a formal neighborhood of $h\in G^\vee$ and to $\widehat{\mht}_h$.  This shows that the restriction of $\mathcal{K}^\bullet$ to the formal neighborhood $\widehat{\mht}_h\times \widehat{\mht}_h \subset \mht_{\tone,\GIT}\times \mht_{\tone,\GIT}$ is the same sheaf defined with respect to restriction of $\mathcal{E}nd(\tm_{\logB})$ to $\widehat{\mht}_h$, i.e. the sheaf that measures the failure of this restriction to be a tilting generator.  
 
 As discussed above, we have already used  Theorem \ref{thm:complete-iso} and \cite[Prop. 3.36]{McBW} to argue that this restriction is a tilting generator, so this shows that the restriction of $\mathcal{K}^\bullet$ to the formal neighborhood of any fiber of $\mht_{\tone,\GIT}\times_{G^\vee} \mht_{\tone,\GIT}\to G^\vee$ is trivial.  This shows that $\mathcal{K}^\bullet$ itself is trivial and completes the proof. 
 \end{proof}
 \begin{corollary} \label{cor:tilteq}
     The map $\mathcal{F} \to \RHom(\tm_{\logB}, \mathcal{F})$ defines an equivalence of categories $\Cohdg(\mht) \cong \End(\tm_{\logB})\dgmod$.
 \end{corollary}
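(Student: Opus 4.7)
The plan is to deduce the corollary as an essentially formal consequence of Theorem~\ref{thm:tilting}, which asserts that $\tm_{\logB}$ is a tilting generator. Recall from the discussion in Section~\ref{subsec:bfield} that a tilting generator is by definition a locally free sheaf $\mathcal{T}$ such that $\RHom(\mathcal{T},-)$ induces a derived equivalence with $\End(\mathcal{T})\dgmod$, so in a sense there is nothing to prove; nevertheless, one ought to spell out why the two standard conditions characterizing a tilting generator --- (i) vanishing of higher self-Ext's $\Ext^{>0}(\tm_{\logB},\tm_{\logB})=0$, and (ii) generation of $\Cohdg(\mht)$ under shifts and cones --- imply the equivalence.

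First I would observe that $\tm_{\logB}$ is a compact object of the (unbounded) dg derived category $\Cohdg(\mht)$ since it is a finite direct sum of line bundles, hence perfect. The functor $F:=\RHom(\tm_{\logB},-)\colon \Cohdg(\mht)\to \End(\tm_{\logB})\dgmod$ is therefore continuous (preserves arbitrary coproducts) and admits a left adjoint $L$ given by tensoring with $\tm_{\logB}$ over $\End(\tm_{\logB})$. By Theorem~\ref{thm:tilting}(i), the counit $L(F(\tm_{\logB}))\to \tm_{\logB}$ and unit $\End(\tm_{\logB})\to F(L(\End(\tm_{\logB})))$ are quasi-isomorphisms (since $\RHom(\tm_{\logB},\tm_{\logB})=\End(\tm_{\logB})$ is concentrated in degree zero), so full faithfulness holds on the subcategory generated by $\tm_{\logB}$.

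Next I would extend this to an equivalence on all of $\Cohdg(\mht)$. Since $F$ commutes with arbitrary coproducts and cones, the subcategory of objects $\mathcal{F}\in\Cohdg(\mht)$ for which the adjunction unit/counit is an equivalence is a localizing subcategory containing $\tm_{\logB}$. By Theorem~\ref{thm:tilting}(ii), $\tm_{\logB}$ generates $\Cohdg(\mht)$, so this subcategory is all of $\Cohdg(\mht)$. A standard invocation of the Bondal--Van den Bergh theorem (or equivalently of Keller's Morita theorem for dg categories) then upgrades this to the desired equivalence $\Cohdg(\mht)\cong \End(\tm_{\logB})\dgmod$.

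The only step requiring care is the verification that the two sides of the equivalence are the ``right'' derived categories: in particular, that $\End(\tm_{\logB})$, rather than its dg enhancement, suffices. This follows because the higher self-Ext's of $\tm_{\logB}$ vanish, so the canonical dg algebra $\RHom(\tm_{\logB},\tm_{\logB})$ is formal and quasi-isomorphic to its $H^0$. The main potential obstacle is purely notational --- ensuring that the t-structures on the two sides match in the form needed for later sections --- but at the level of triangulated (or dg) equivalences the argument is essentially automatic once Theorem~\ref{thm:tilting} is in hand.
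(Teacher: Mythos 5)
Your proof is correct, and the key observation comes in your first sentence: the paper's working definition of tilting generator (Section~\ref{subsec:bfield}) is \emph{already} that $\RHom(\mathcal{T},-)$ induces a derived equivalence, so Corollary~\ref{cor:tilteq} is a direct restatement of Theorem~\ref{thm:tilting} and the paper supplies no independent argument. The further paragraphs in your write-up correctly rehearse the standard Bondal--Van den Bergh / Keller argument (Ext-vanishing plus generation imply the equivalence), which is valid but more than the paper calls for; one cosmetic caveat is that your citations to ``Theorem~\ref{thm:tilting}(i)'' and ``Theorem~\ref{thm:tilting}(ii)'' do not correspond to actual clauses of that theorem --- it has no numbered parts, and its proof instead verifies Kaledin's criterion (vanishing of higher cohomology of $\mathcal{E}nd(\tm_{\logB})$ and of the kernel $\mathcal{K}$), which is equivalent to, but packaged differently from, your (i) and (ii).
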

 
 Together with Lemma \ref{lem:tilt-nccr}, this shows that if we choose $\logB$ so that $\mathbf{0}\in \Lambda(\logB)$, then the algebra $\End(\tm_{\logB})$ is a noncommutative crepant resolution of singularities.  

 \subsection{Computation of the endomorphism algebra} 
%
 We are now ready to compute the endomorphism algebra for the tilting bundle defined above.
Recall from Section \ref{sec:global} that we defined quivers with relations $\cQ_\Bfield,\overline{\cQ}_\Bfield,$ and $A_{\Bfield}$ is the path algebra of the latter quiver.
\begin{theorem}\label{th:tiltingiso}
There is an isomorphism
\begin{equation}\label{eq:tiltingiso}
    A_{\Bfield}\cong \End(\tm_{\Bfield})
\end{equation} sending the length zero path $e_{\Bx}$ to the projection to the corresponding line bundle.  This induces an equivalence of categories $\Cohdg(\mathfrak{U})\cong \overline{\cQ}_{\Bfield}\operatorname{-dgmod}$.
\end{theorem}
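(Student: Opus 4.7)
The plan is to define the map $A_\Bfield \to \End(\tm_\Bfield)$ explicitly on generators of the quiver $\overline{\cQ}_\Bfield$ using canonical sections of the line bundles $\ml_\chi$, verify the defining relations, and prove bijectivity by reducing to the additive setting via the formal comparison isomorphism of Theorem~\ref{thm:complete-iso}.

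First, I define the map on generators. The idempotent $e_\Bx$ at the vertex indexed by a chamber $\Bx \in \overline{\Lambda}(\logB)$ is sent to the projection of $\tm_\Bfield$ onto the summand $\ml_\Bx$. For a facet $F$ separating adjacent chambers $\Bx$ and $\By = \Bx + e_i$, the arrows $u_{\Bx\By}$ and $v_{\Bx\By}$ are sent respectively to multiplication by the sections $\sz_i$ and $\sw_i$, which carry the correct $T$-weights to define morphisms $\ml_\Bx \rightleftarrows \ml_\By$. The loop $M_F = v_{\Bx\By} u_{\Bx\By} + 1_{V_\Bx}$ is therefore sent to multiplication by $\sw_i \sz_i + 1 = h_i \in \mathcal{O}(\mht)$, which is identified with the generator of $\C[\pi_1(T_\delta)]$ indexed by $F$.

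Second, I verify the relations of $A_\Bfield$. The relation $vu + 1 = M_F$ is precisely the multiplicative moment map identity $\sw_i \sz_i + 1 = h_i$ on $\mht$. Commutativity of parallel length-two paths around codimension-two faces of $\Btor$ reduces to commutativity of $\mathcal{O}(\mht)$. Invertibility of each $M_F$ corresponds to the invertibility of $h_i$ on $\mht$, which follows from genericity of $\parone$ (ensuring that the relevant coordinate hyperplanes of $D^\vee$ do not meet the image of $\mht$ in $G^\vee$).

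Third, I establish bijectivity. Both $A_\Bfield$ and $\End(\tm_\Bfield)$ are finitely generated modules over the Noetherian ring $\mathcal{O}(\mht)$, so it suffices to check the isomorphism after formal completion at every closed point $h \in G^\vee \subset \mht$. By Theorem~\ref{thm:complete-iso}, $\widehat{\mht}_h$ is isomorphic to $\widehat{\aht}_{\log h}$ compatibly with the line bundles $\ml_\chi \leftrightarrow \al_\chi$, so the completed endomorphism algebra of $\tm_\Bfield$ matches that of $\ta_{\logB}$. On the additive side, Hom spaces between line bundles on a formal neighborhood of a fiber of $\mu_\C$ admit a standard combinatorial description via lattice points of the corresponding polytopal decomposition, which matches paths in the local subquiver of $\overline{\cQ}_\Bfield$ with the inherited relations. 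Assembling these local isomorphisms produces~\eqref{eq:tiltingiso}, and the equivalence of categories then follows by composition with the tilting equivalence of Corollary~\ref{cor:tilteq}. The main technical obstacle will be executing this local-to-global step carefully, both in spelling out the combinatorial description of Hom spaces on the additive side and in checking that the local isomorphisms patch coherently over $\mathcal{O}(G^\vee)$.
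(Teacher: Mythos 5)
Your definition of the map agrees exactly with the paper's, and the verification of the relations is the same in spirit, but your bijectivity argument is genuinely different from the one in the paper. The paper proves surjectivity directly: for each pair of chambers $\Bx, \By$, the image of $c_{\Bx,\By}\cdot\C[G^\vee]$ hits all sections of $\Hom(\ml_\Bx,\ml_\By)$ of the correct $D$-weight, because the Kirwan--Ness unstable locus has codimension $\geq 2$ (using that $T$ contains no coordinate subtorus), so functions on $\mmm_{\C^\times}^{-1}(G^\vee)$ surject onto sections on the stable locus. Injectivity is then the elementary observation that the relations allow one to shorten any path to one in $c_{\Bx,\By}\cdot\C[G^\vee]$.

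Your proposed route through Theorem~\ref{thm:complete-iso} has several genuine gaps. First, checking an algebra map is an isomorphism by passing to formal completions requires both source and target to be finitely generated over the base (a non-finitely-generated module can have all completions vanish, \emph{e.g.}\ $\Q$ over $\Z$). Finite generation of $\End(\tm_\logB)$ is clear, but finite generation of $A_\Bfield$ over $\C[G^\vee]$ is not free: establishing it amounts to the path-shortening argument that the paper uses for injectivity, so your route does not avoid that step but rather silently absorbs it. Second, the ``standard combinatorial description'' of $\End(\ta_\logB)$ in the additive completed setting is asserted, not proved or precisely referenced; this is exactly the content you would need from \cite{WebcohI} or a direct computation, and you would then have to check that Theorem~\ref{thm:complete-iso}'s identification of line bundles actually intertwines the additive relations (involving $\exp$) with the multiplicative ones in $A_\Bfield$. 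You acknowledge the local-to-global patching as an obstacle, and indeed it is where most of the remaining work lies. Finally, a small error: invertibility of $M_F$, \emph{i.e.}\ of $h_i = \sz_i\sw_i+1$, is not a consequence of genericity of $\parone$; it holds automatically because $\sz_i\sw_i\neq -1$ is part of the definition of $\TAno$. Genericity of $\parone$ controls smoothness of $\mht$, not nonvanishing of $h_i$.
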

Note that this completes the proof of Theorem \ref{th:nc-res}.
\begin{proof}
We define the map \eqref{eq:tiltingiso} as follows: the map $\C[G^\vee]\to \End(\tm_\Bfield)$ is given by the composition
\[
\C[G^\vee]\to \C[\mht] \to \End(\tm_{\logB}),
\]
where the first map is the pullback and the second is the action of functions as endomorphisms of any coherent sheaf. And for $\Bx,\By\in\Lambda$ such that
$\By=\Bx+\ep_i$, we send
\[c_{\By,\Bx}\mapsto \sz_i,\qquad c_{\Bx,\By}\mapsto \sw_i.\]
We can easily check that this is a homomorphism: the relations (2)  from Definition~\ref{def:perv-tor} are just commutativity, and the relations (1) are the moment map condition.  

Given any $\Bx,\By\in \Lambda$, there is a unique element $c_{\Bx,\By}$ defined as the product along any minimal length path between these vertices.  This maps to the unique minimal monomial in $\C[\mmm^{-1}(G^\vee)]$ whose weight under the action of $D$ is the difference $\Bx-\By$, and so $c_{\Bx,\By}\cdot \C[G^\vee]$ maps surjectively to all elements of that weight in $\C[\mmm^{-1}(G^\vee)]$.  Since we have assumed that $T$ contains no coordinate subtori, every cocharacter into $T$ has non-trivial weight on at least 2 coordinates, and by the symplectic property, this implies that the Kirwan-Ness stratum for this cocharater has codimension $\geq 2$.  Thus, $\C[\mmm^{-1}(G^\vee)]$ surjects to functions on the stable locus.

This shows that $c_{\Bx,\By}\cdot \C[G^\vee]$ maps surjectively to the elements of $\Hom(\ml_{\Bx},\ml_{\By})$ of the correct $T$-weight. Ranging over all $\By$ with the same image in $\bar{\Lambda}$, we obtain all homomorphisms $\ml_{\Bx}\to \ml_{\By}$, so the desired map is surjective.  On the other hand, our relations allow us to shorten any path to an element of $c_{\Bx,\By}\cdot \C[G^\vee]$.  
\end{proof}

Combining Corollary \ref{cor:Fuk-Q} and Theorem \ref{th:tiltingiso}, we deduce our main theorem. 

\begin{theorem}\label{thm:main-thm}
Let $\parone \in T^\vee$ be generic. Then there is an equivalence of dg categories
\begin{equation}\label{eq:hms}
\Fuk(\mht_{(\parone,\tone)})\cong \Cohdg(\mht_{(\tone,\parone)}).
\end{equation}
For $\parone\in T^\vee_\R,$ this equivalence is induced from an equivalence of abelian categories, the respective hearts of the perverse t-structure and the noncommutative resolution t-structure.
\end{theorem}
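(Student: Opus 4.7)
The plan is to observe that both sides of the equivalence have already been computed, in the preceding sections, in terms of the same combinatorial object: the quiver with relations $\overline{\cQ}_\parone$ attached to the toric hyperplane arrangement $\Btor_\parone$ in $\mathfrak{g}^{\vee}_\R$. The proof is therefore essentially a matter of composing the two identifications and verifying that they produce the same quiver.

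On the A-side, I would invoke Corollary~\ref{cor:Fuk-Q}, which (assuming Conjecture~\ref{conj:cosheaf}) identifies $\Fuk(\mht_{(\parone,\tone)})$ with $\overline{\cQ}_\parone\dgmod$. This identification is obtained by covering $\mht_{(\parone,\tone)}$ by Liouville sectors adapted to the components of the Lagrangian skeleton $\LL$ described in Propositions~\ref{prop:mht-skel} and~\ref{prop:toricskel}, applying the cosheaf gluing conjecture to express the Fukaya category as a colimit of Fukaya categories of sectors, and then converting these (via \cite{GPS3} and Theorem~\ref{th:sheaf-match}) into microlocal sheaf categories whose global description is $\overline{\cQ}_\parone\dgmod$. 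On the B-side, I would invoke Corollary~\ref{cor:tilteq} together with Theorem~\ref{th:tiltingiso}: the tilting bundle $\tm_\Bfield$ from Theorem~\ref{thm:tilting} yields an equivalence $\Cohdg(\mht_{(\tone,\parone)}) \cong \End(\tm_\Bfield)\dgmod$, and the endomorphism algebra is canonically isomorphic to the path algebra $A_\Bfield$ of $\overline{\cQ}_\Bfield$. When $\parone\in T^\vee_\R$ so that $\GIT=0$ and $\parone=\Bfield$, the two quivers $\overline{\cQ}_\parone$ and $\overline{\cQ}_\Bfield$ are literally the same object; composing the equivalences gives the desired derived equivalence. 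For general generic $\parone\in T^\vee$, one reduces to this real case by varying $\GIT$ continuously within its chamber, invoking standard variation of GIT (as in the remark following Corollary~\ref{cor:tilteq}) to see that the B-side category is unchanged, and noting that the A-side symplectic structure depends only on $\GIT$ up to homotopy of Liouville structures.

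For the abelian refinement when $\beta \in T^\vee_\R$: on the A-side, the identification with $\overline{\cQ}_\parone\dgmod$ proceeds through the perverse $t$-structure on $\Sh_\Lambda(\mathfrak{X}_\Delta)$ (Proposition~\ref{prop:pervt} and Theorem~\ref{thm:dupont}), whose heart in the global gluing is precisely the abelian category $\overline{\cQ}_\parone\mmod$, as verified in Theorem~\ref{th:sheaf-match} using Lemma~\ref{lem:tlim}. On the B-side, the tilting equivalence visibly restricts to an equivalence of hearts between $\Coh(\mht_{(\tone,\parone)})$ and $A_\parone\mmod$, since $\tm_\Bfield$ has no higher self-extensions. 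Thus both equivalences restrict to equivalences of abelian categories with the same heart $A_\parone\mmod = \overline{\cQ}_\parone\mmod$, and the composite derived equivalence is induced by an equivalence of hearts.

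The main nontrivial content, already established earlier in the paper, is verifying that the two descriptions yield the same quiver with relations: the A-side presentation is built from microstalks of perverse sheaves at smooth points of $\Lambda$ and from variation/canonical maps constrained by loops around toric divisors, whereas the B-side presentation is built from the homomorphism spaces between line-bundle summands of $\tm_\Bfield$ modulo the multiplicative moment-map relations on $\C[\mht]$. That these match is not obvious a priori; it is ensured precisely by the explicit computations carried out in Theorems~\ref{thm:dupont} and~\ref{th:tiltingiso}, together with the combinatorial coincidence that both quivers have vertices indexed by chambers of $\Btor_\parone$, opposing-edge pairs for adjacent chambers, and relations matching loops around codimension-one toric strata with endomorphisms $1 + vu$. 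At this stage the remaining work is essentially bookkeeping, which is why the main theorem admits the one-line proof proposed here.
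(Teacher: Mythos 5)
Your proposal takes essentially the same route as the paper: the paper's proof is literally the one sentence ``Combining Corollary~\ref{cor:Fuk-Q} and Theorem~\ref{th:tiltingiso}, we deduce our main theorem,'' and you have correctly identified that this composition is the whole argument, with the two corollaries respectively identifying both sides with $\overline{\cQ}_\parone\dgmod$.

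One small caution on your abelian-heart discussion: the tilting equivalence of Corollary~\ref{cor:tilteq} between $\Cohdg$ of a \emph{commutative} resolution (generic $\GIT$) and $\End(\tm_{\logB})\dgmod$ is emphatically \emph{not} $t$-exact --- producing a new $t$-structure is the whole point of tilting, and ``$\tm_\Bfield$ has no higher self-extensions'' makes $\tm_\Bfield$ a tilting bundle, not an exact generator. The correct statement is simpler: when $\parone\in T^\vee_\R$, so $\GIT=0$, the B-side category $\Cohdg(\mht_{(\tone,\parone)})$ is \emph{defined} to be modules over the noncommutative resolution $A_\Bfield$ (per Section~\ref{subsec:bfield}), so its natural heart is $A_\Bfield\mmod$ tautologically, and the nontrivial $t$-structure content lives entirely on the A-side via Proposition~\ref{prop:pervt} and Theorem~\ref{th:sheaf-match}. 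Your sketch of the reduction to $\GIT = 0$ for the general generic $\parone$ is more explicit than anything in the paper (which is silent on this point), but be aware that it is handwaving in a direction the paper does not actually justify either --- in particular the A-side skeleton description is only established for $\parone\in T^\vee_\R$.
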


\subsection{SYZ mirror symmetry}\label{subsec:SYZ2}
In order to make contact with the theory of SYZ mirror symmetry, we make the following definition:
\begin{definition}\label{def:torusfib-object}
    An object of $\Fuk(\mhta)$ is a {\em torus fiber object} if, under the equivalence $\Fuk(\mhta)\cong\cQ_\parone\dgperf,$ it corresponds to an quiver representation with dimension vector $(1,\ldots,1).$
\end{definition}

To see that this definition is reasonable, recall that the Lagrangian skeleton $\LL$ of $\mhta$ is equivalent to the central fiber of the holomorphic integrable system on the Dolbeault space described in \ref{subsec:dolbeault}, and that smooth fibers of this integrable system degenerate to the central fiber with multiplicity one along each toric component. (If such a smooth torus fiber $\mathcal{T}$ is an exact Lagrangian, then an actual embedding of the moduli of local systems on $\mathcal{T}$ into the Fukaya category $\Fuk(\mhta)$ can be accomplished using the nearby cycles functors studied in \cite{NS20}. A similar embedding for non-exact torus fibers is provided by \cite{Shende-nonexact}, at the cost of introducing a Novikov variable.)

SYZ mirror symmetry, as discussed in Section~\ref{sec:syz-appendix}, takes the form of a statement relating torus fiber objects in a Fukaya category to points of a dual variety. Using Definition \ref{def:torusfib-object}, a statement of this form can be obtained as a corollary of the mirror symmetry isomorphism in Theorem \ref{thm:main-thm}. Recall that there is an equivalence
\begin{equation}\label{eq:tilting2}
    \Cohdg(\mht_{(\tone,\parone)})\cong \Cohdg(\mht_{(\tone,\GIT)})
\end{equation}
between the ``noncommutative resolution'' category $\Cohdg(\mht_{(\tone,\parone)})$ and the category of coherent sheaves on an actual resolution $\mht_{(\tone,\GIT)},$ given by a tilting bundle $\tm$ on the resolution.

\begin{corollary}\label{cor:skyscraper-torus}
    Let $\cO_p\in \Coh(\mht_{(\tone,\GIT)})$ be a skyscraper sheaf at a point of the resolved variety $\mht_{(\tone,\GIT)}.$ Under the composition of equivalences \eqref{eq:tilting2} and \eqref{eq:hms}, the object $\cO_p$ corresponds to a torus-like object in the Fukaya category $\Fuk(\mhta).$
\end{corollary}
\begin{proof}
The equivalence \eqref{eq:tilting2} sends $\cO_p$ to the natural $\End(\tm)$-module given by $\R\Hom(\tm,\cO_p).$ As $\cO_p$ is a skyscraper sheaf and $\tm$ is a direct sum of line bundles, this module has dimension 1 at each summand of $\tm,$ as desired.
\end{proof}

\begin{remark}
The most robust statement of SYZ mirror symmetry would actually construct $\mht_{(\tone,\parone)}$ as a moduli space of $(1,\ldots,1)$-dimensional representations of the quiver $\cQ_\parone.$ Corollary \ref{cor:skyscraper-torus} shows that this is roughly correct: from examples, it appears that the map $\mht_{(\tone,\parone)}$ to the moduli stack of torus fiber objects is birational.  However, this map is definitely not an isomorphism, and in particular, certain points in this moduli space will be in the image of  $\mht_{(\tone,\parone)}$ for some $\parone$ and not for others; furthermore, some points, such as the trivial representation, will not be in this image for any value of $\parone$.  
At the moment, we know no principled explanation for how this image depends on the GIT parameter $\GIT$; in particular, it does not seem to have a description in terms of the usual slope-stability for quiver representations.
\end{remark}


\section{Monodromy functors from perverse schobers}\label{sec:mon}
As mentioned in the introduction, a mirror symmetry theorem over the nonsingular parameter space would include not only the equivalences \eqref{eq:hms},
but also equivalences among these as we circumnavigate walls in the parameter space.
These equivalences would thus fit into an action of $\pi_1(T^\vee_{\operatorname{gen}})$ by ``monodromy in a local system of categories,'' compatible with the equivalences \eqref{eq:hms}. 

In fact, the categories we have computed should live not just over the nonsingular parameter space but over the whole parameter space $T^\vee$; the resulting family of categories will fail to be a local system precisely along the real subtori of $T^\vee$ which are the walls in a toric hyperplane arrangement $\Dtor.$ The notion of such a ``perverse sheaf of categories'' exists already in mirror symmetry, thanks to work of Kapranov and Schechtman \cite{KSschobers,KShyperplane,BKS}, where it goes by the name of {\bf perverse schober.}

One advantage of the description from \cite{KShyperplane,BKS} is that it presents the category of perverse sheaves on a complex vector space stratified with respect to a real hyperplane arrangement only in terms of the behavior of these sheaves over subsets of the \emph{real locus} inside this complex vector space. For instance, the monodromy of such a perverse sheaf can be recovered as a composition of canonical and variation maps into and out of the singular locus. This should be understood as a generalization of the geometrical fact that monodromy of nearby cycles can be deduced from a sufficiently good understanding of their relation with vanishing cycles.

Perverse schobers give a presentation of these facts at the level of categorical invariants: in short, all the data of perverse sheaf of categories can be encapsulated in functors between nearby and vanishing categories. Using this insight, we present here a construction of a ``perverse schober of Fukaya categories,'' and we compare it to the B-side perverse schober defined using wall-crossing functors in\cite{WebcohI}.
As expected, we prove that equivalence of categories \eqref{eq:hms} extends to an equivalence of perverse schobers. 

\subsection{Perverse schobers stratified by hyperplane arrangements}
The B-side schober in this case fits into a pattern familiar in representation theory, namely the action of wall-crossing functors, as explained in \cite[Section 4]{WebcohI}. We review that discussion here and then specialize to the hypertoric case. 
	
We need from \cite{BKS} the notion of a perverse schober on a complex vector space stratified with respect to a real hyperplane arrangement. (Technically, we are interested in perverse sheaves on a complex torus stratified by a real toric hyperplane arrangement, but as usual we choose to simplify notation by passing to the universal cover and working $\mathfrak{t}^\vee_\Z$-equivariantly.) The precise version of the definition we use is based on the procedure which associates to a perverse sheaf its sections along the star of each stratum in a real hyperplane arrangement. (Recall from \cite[Section 5]{KSschobers} that a schober on a base $B$ should associate categories to \emph{Lagrangian} subsets of $B$.)

	\begin{example}[``Double-cut realization'']
		Consider the case when the underlying real vector space is one-dimensional, stratified by a single hyperplane (point) at the origin. The Kapranov-Schechtman description presents $\Perv(\C,0)$ in the following way: to a perverse sheaf $E$, we associate one vector space for each stratum in the real hyperplane arrangement. To the two open strata, we associate the sections $E_{\pm}:=\Gamma_{\R_{\pm}}(E)$ of $E$ on the positive or negative real loci (which are equivalent to the stalks of $E$ at a generic positive and negative real point), and to the closed stratum, we associate the sections $E_0:=\Gamma_\R(E)$ along the whole real line (which we think of as the star of the closed stratum). For each inclusion of stratum closures, we get a pair of maps $\Gamma_{\R_\pm}(E)\leftrightarrows\Gamma_R(E)$, and the whole perverse sheaf is determined by these maps.

		Similarly, a perverse schober $\scrE$ in this setting is determined by three dg categories, which we write $\scrE_\pm$ and $\scrE_0,$ along with pairs of adjoint functors $\scrE_\pm\leftrightarrows\scrE_0$ satisfying some conditions, which we detail below.
	\end{example}

	The case of a higher-dimensional hyperplane arrangement is the obvious generalization of this example: the data of a perverse schober includes a category $\scrE_A$ associated to the star of each stratum $A$ in the real vector space, along with adjoint functors for incidences of strata. In the following definition (the modification in \cite{WebcohI} of the definition from \cite{BKS}), we detail one way of encapsulating this data, which we will find convenient for our purposes.

\begin{definition}\label{def:schober}
	Let $\ft^\vee=\ft^\vee_\R\otimes\C$ be the complexification of a real vector space, and $\Dper\subset \ft^\vee_\R$ a periodic hyperplane arrangement. A {\bf perverse schober on $\ft^\vee,$ stratified by $\Dper$,} consists of the following data:
	\begin{enumerate}
		\item A dg category $\scrE_A$ for each face $A$ in $\Dper$.
		\item A pair of adjoint functors $\delta_{C'C}:\scrE_C \leftrightarrows \scrE_{C'}:\gamma_{CC'}$ for any incidence of faces $C'\leq C.$ The left adjoint $\delta_{C'C}$ is the {\bf specialization}, and its right adjoint $\gamma_{CC'}$ the {\bf generalization}, functor for the incidence $C'\leq C$. If $\bar{A}\cap \bar{C}\neq 0,$ and $B$ is the unique open face in the intersection, these functors compose to give a {\bf transition functor} $\phi_{AC}:=\gamma_{AB}\delta_{BC}:\scrE_C\to \scrE_A.$
		\item Isomorphisms $\gamma_{CC'}\gamma_{C'C''}\cong \gamma_{CC''}$ for consecutive incidences $C''\leq C'\leq C,$ with associativities for longer incidences.
		\item Isomorphisms $\phi_{AB}\phi_{BC}\cong \phi_{AC}$ for $(A,B,C)$ a collinear family of faces, with associativities for longer collinear families. Hence for any two faces $A$ and $B$, we can define $\phi_{AB}:=\phi_{AA_1}\phi_{A_1A_2}\cdots\phi_{A_nB}$ for $A_1,\ldots,A_n$ the ordered list of faces passed by a generic line segment from $A$ to $B$; the previous isomorphism data guarantees that these composite transition functors are well-defined.
	\end{enumerate}
	These data are required to satisfy the following further conditions:
	\begin{enumerate}\renewcommand{\theenumi}{\roman{enumi}}
		\item If $C'\leq C,$ the unit natural transformation $\gamma_{C'C}\delta_{CC'}\to 1_{\scrE_{C'}}$ is an isomorphism. For $B$ any face in the intersection $\bar{A}\cap \bar{C}$, this gives a natural isomorphism $\phi_{AB}\cong \gamma_{AB}\delta_{BC}.$
		\item If $A$ and $B$ have the same dimension, span the same subspace, and are adjacent across a face of codimension 1 in $A$ and $B$, then $\phi_{AB}$ is an equivalence.
	\end{enumerate}
\end{definition}
In our setting, we will associate to each stratum $C$ an algebra $A_C$, and to each pair of strata a bimodule ${}_C T_{C'}$ so that 
\begin{align*}
    \scrE_C & = A_C \dgperf \\
    \phi_{CC'} & = {}_C T_{C'} \Lotimes-.
\end{align*}
 In particular, each top-dimensional stratum $C$ of our stratification will consist of generic parameters $\parone$ in the sense of Defintion \ref{def:paronegeneric}, and we will set $\scrE_C = A_{\parone} \dgperf$, which we have seen describes both coherent sheaves on $\mht_{1, \parone}$ and the Fukaya category of $\mht_{(\parone, 1)}$.

\begin{remark}
The above definition may seem involved; luckily, we will not need here to work with it in any serious way: the verification that the B-side family we discuss satisfies this definition was already performed in \cite{WebcohI}, and we will prove that the A-side data we construct is isomorphic to that B-side schober.
\end{remark}





\subsection{The B-side schober}
As promised, we define a pair of perverse schobers over the parameter space $\ft^{\vee}_\R$ which encapsulate the whole family of B-side and A-side categories, respectively, as the parameter $\parone\in\ft^\vee_\R$ varies. We begin here with the B-side schober.

\begin{definition}
Let $\Dper$ be the periodic hyperplane arrangement in $\ft^\vee_\R$ whose hyperplanes are the loci in which the parameter $\parone\in \ft^\vee_\R$ is nongeneric, in the sense of Definition~\ref{def:paronegeneric}.
\end{definition}

In other words, the walls of $\Dper$ are defined by the equations $\langle \sigma, \parone\rangle = n,$ for $n\in\Z,$ and $\sigma$ a signed circuit (as defined in Definition~\ref{def:circuits}).
The resulting stratification of $\ft^{\vee}_\R$ is the stratification by topological type of $\Bper_\parone$.

\begin{definition}
     Let $\scrH$ be the periodic coordinate hyperplane arrangement in $\fd^{\vee}_\R = \R^n$. We write $\widetilde{\scrQ}$ for the quiver with relations associated to the arrangement $\scrH$ as in Definition \ref{def:abstract-quiver}. 
\end{definition}

The quiver $\widetilde{\scrQ}$ has a natural geometric interpretation.  The trivial line bundle on $\mmm_{\C^\times}^{-1}(1)$ carries a different equivariant structure for each character of $D$, or equivalently for each chamber of $\scrH$.  One can easily modify the proof of Theorem \ref{th:tiltingiso} to show that this quiver with relations controls the $D$-equivariant homomorphisms between these line bundles.  Of course, each such homomorphism maps to a homomorphism between the associated bundles on $\mathfrak{U}$. However, it seems unlikely to always be a surjective map; the tilting property of $\tm$ guaranteed that this was not an issue in Theorem \ref{th:tiltingiso}, but a general line bundle could have higher cohomology that interferes with the surjectivity.

Fix a stratum $C$ in $\Dper$, and write $U_C$ for the preimage of the star of $C$ under the projection map $\fd^\vee_\R\to \ft^\vee_\R$. The intersection $U_C\cap \scrH$ picks out a subset of the chambers and faces of $\scrH$; we can understand this as a hyperplane arrangement in $U_C,$ which we will denote by $\scrH_C.$ 

\begin{definition} \label{def:prequotientschobquiver}
We write $\widetilde{\scrQ}_C$ for the subquiver with relations of $\widetilde{\scrQ}$ associated to the inclusion $\scrH_C\subset \scrH$. Thus $\widetilde{\scrQ}_C$ contains only those vertices whose associated chambers intersect $U_C$.

\end{definition}

\begin{definition} \label{def:quotientschobquiver}
We write $\bqsch_C$ for the quotient of this quiver by the action of the lattice $\fg^\vee_\Z$. 
We let $\widetilde{A}_C$ be the path algebra of this graph modulo the relations already discussed. 

The map $T \to D$ induces an embeding $\C[\pi_1 (T)] \to \C[\pi_1 (D)]$. We have a central embedding $\C[\pi_1 (D)] \to \widetilde{A}_C$. Write $A_C$ for the quotient of $\widetilde{A}_C$ by the ideal generated by $\gamma - 1$ for all $\gamma \in \pi_1(T)$.
\end{definition}
Of course, the algebra $A_C$ contains a central copy of $\C[\pi_1(G)]$, which we identify with the function ring of $G^\vee$.  

If we interpret the quiver $\widetilde{\scrQ}$ as the Ext algebra of a set of $D$-equivariant line bundles, then $\bqsch_C$ can be understood as the Ext algebra of the same line bundles, deequivariantized from $D$ to $T$.

The same quiver with additive moment map relations instead of multiplicative ones controls the schober for coherent sheaves on an additive quiver variety described in \cite{WebcohI} and gives the algebra denoted $A_C$ in \cite[\S 4.5]{WebcohI}, which we will denote $A_C^{(+)}$ here to avoid confusion.  The proof of Theorem \ref{thm:complete-iso} is easily extended to show that:
\begin{lemma}\label{lem:quiver-m-a}
	The completion of $A_C$ at $h\in G^\vee$ is isomorphic to the completion of $A_C^{(+)}$ at $\log h$, sending the idempotent $e_x$  for $x\in \bqsch_C$ to the same idempotent in $A_C^{(+)}$.  
\end{lemma}


If $C$ is a top-dimensional stratum (so that its star is again $C$), the arrangement $U_C$ is the product of $C$ with $\Bper_{\parone}$ for any $\parone \in C$. 
This gives a natural isomorphism $A_C \cong A_\parone$.


More generally, if we have two faces $C_+$ and $C_-$ whose closures intersect, then there is a face $C$ such that $\bar{C}=\bar{C}_+\cap \bar{C}_-$. In this case, $\qsch_C$ contains $\qsch_{C_\pm}$ as subgraphs, so $A_C$ contains idempotents $e_\pm$ such that $e_{\pm}A_Ce_{\pm}=A_{C_\pm}$.  
\begin{definition}
    For a pair $(C_+,C_-)$ of adjacent faces whose closures meet at $C$, the {\bf wall-crossing bimodules}
    are given by
    \[{}_{C_+} T_{C_-}:=e_{+}A_Ce_{-},\qquad {}_{C_-} T_{C_+}:=e_{-}A_Ce_{+}.\]
    For more general pairs of faces $(C,C')$, we pick a generic path between them that crosses a minimal number of hyperplanes, passing consecutively through faces $C',C_1,\dots,C_n,C,$
    so that $C_i$ and $C_{i+1}$ are adjacent. The corresponding wall-crossing bimodule is defined by
    \begin{equation}
        {}_{C'} T_{C}:={}_{C'} T_{C_1}\otimes_{A_{C_1}}{}_{C_1} T_{C_2}\otimes_{A_{C_2}}\cdots \otimes_{A_{C_n}}{}_{C_{n}} T_{C}.\label{eq:WC-def}
    \end{equation}
\end{definition}
Lemma \ref{lem:quiver-m-a} shows that after completion at a point $h\in G^\vee$, the bimodule ${}_{C'} T_{C}$ with action transported by the isomorphisms to $A_{C'}^{(+)},A_C^{(+)}$ is isomorphic to the additive version of this wall-crossing bimodule ${}_{C'} T^{(+)}_{C}$, compatibly with the multiplication maps ${}_{C''} T_{C'}\overset{L}\otimes {}_{C'} T_{C}\to {}_{C''} T_{C}$.  In particular, this shows that 
\begin{lemma}\label{lem:bim-m-a}
\hfill
	\begin{enumerate}
		\item The multiplication map ${}_{C''} T_{C'}\overset{L}\otimes {}_{C'} T_{C}\to {}_{C''} T_{C}$ is a quasi-isomorphism if and only if the same is true for the additive version of these bimodules ${}_{C''} T^{(+)}_{C'}\overset{L}\otimes {}_{C'} T^{(+)}_{C}\overset{\sim}\to {}_{C''} T^{(+)}_{C}$.
		\item The tensor product functor ${}_{C'} T_{C}\overset{L}\otimes_{A_{C}}-\colon D^+(A_C\mmod)\to D^+(A_{C'}\mmod)$ is an equivalence if and only if the same is true for the additive version ${}_{C'} T^{(+)}_{C}\overset{L}\otimes_{A^{(+)}_{C}}-\colon D^+(A^{(+)}_C\mmod)\to D^+(A^{(+)}_{C'}\mmod)$.
	\end{enumerate}
\end{lemma}
\begin{proof}
	Both of these are questions whether maps of complexes are isomorphisms, which is suffices to check after completion at all points in $G^\vee$.  This is manifest in the first case.  In the second, this is because ${}_{C'} T_{C}\overset{L}\otimes_{A_{C}}-$ is an equivalence if and only if the unit and counit of its adjunction with $ \mathbb{R}\Hom_{A_{C'}}({}_{C'} T_{C},-)$ are isomorphisms, i.e. if the natural maps
	\[  A_{C}\to \mathbb{R}\Hom_{A_{C'}}({}_{C'} T_{C},{}_{C'} T_{C})\qquad {}_{C'} T_{C}\overset{L}\otimes_{A_{C}}\mathbb{R}\Hom_{A_{C'}}({}_{C'} T_{C},A_{C'})\to A_{C'}\]
	are both isomorphisms.  This completes the proof.
\end{proof}

Since \cite[Lem. 4.6]{WebcohI} (transfered to the multiplicative setting through the completion) guarantees that higher Tors vanish, the defining equation \eqref{eq:WC-def} of ${}_{C'}T_C$ is actually a non-derived tensor product.

That the procedure used to define ${}_{C'}T_C$ gives a well-defined wall-crossing functor is a consequence of the schober property, proved in Theorem~\ref{thm:b-schober} below. 
For our purposes, it is more useful to think of the algebras $A_C$ as the basic objects, and the wall-crossing functors as manifestations of them, since they encode the full schober structure.

\begin{definition}
	The {\bf B-side schober}  assigns the dg-category $A_C\dgmod$ to a face $C$ and the functor ${}_{C'} T_{C}\Lotimes-$ as the transition functor $\phi_{C'C}$.  
\end{definition}

This name is justified by the following theorem:
\begin{theorem}\label{thm:b-schober}
	The above assignment  defines a perverse schober in the sense of Definition~\ref{def:schober}.
\end{theorem}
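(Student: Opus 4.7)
The plan is to identify the specialization and generalization functors as induction and idempotent restriction between the path algebras $A_C$, dispatch the associativity and unit axioms by direct idempotent calculus, and reduce the two substantive axioms---collinear composition and the equivalence property for adjacent equal-dimensional faces---to the analogous additive statement proved in \cite{WebcohI} by passing through the completion isomorphism of Theorem~\ref{thm:complete-iso}.

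First, for any incidence $C' \leq C$ the star of $C'$ contains that of $C$, so $\bqsch_C$ is a full subquiver of $\bqsch_{C'}$, and the sum of its vertex idempotents defines an idempotent $e_C \in A_{C'}$ with $e_C A_{C'} e_C = A_C$. Set
\[
\gamma_{CC'}(N) := e_C N, \qquad \delta_{C'C}(M) := A_{C'} e_C \otimes_{A_C} M,
\]
which are manifestly adjoint. For adjacent top-dimensional chambers $C_\pm$ meeting at a codimension-one face $C$, the composite $\gamma_{C_+ C}\circ \delta_{C C_-}$ is the functor $e_+ A_C e_- \otimes_{A_{C_-}}-$, so this formalism reproduces the wall-crossing bimodules of \eqref{eq:WC-def}.

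The unit of each incidence adjunction is an isomorphism because $e_C A_{C'} e_C = A_C$. For a chain $C'' \leq C' \leq C$, the idempotents of $A_{C''}$ satisfy $e_C e_{C'} = e_C$, since every chamber whose closure contains $C$ also has closure containing $C'$; this yields associativity of generalization, with higher coherences collapsing to trivial identities among nested sums of vertex idempotents. The remaining axioms---collinear composition $\phi_{AB}\phi_{BC} \cong \phi_{AC}$ and the equivalence property for adjacent equal-dimensional faces---both amount to computing derived tensor products of the wall-crossing bimodules and identifying them with the appropriate idempotent truncation of a larger path algebra. In the additive setting this is precisely the content of \cite[Sec.~4]{WebcohI}, whose essential input is the vanishing of higher $\Tor$'s between the bimodules $e_A A_B e_B$ and $e_B A_B e_C$ (\cite[Lem.~4.6]{WebcohI}). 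Granted this vanishing, the underived bimodule calculations of \cite{WebcohI} go through verbatim, since all the relevant algebras are built from the same combinatorial data.

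The main obstacle is transferring Tor vanishing from the additive to the multiplicative side. The relevant $\Tor$ groups are finitely generated over the center of $A_B$, which contains $\C[\mht]$, so vanishing can be checked on formal neighborhoods of the fibers of $\mmm_{\C^\times} \colon \mht \to G^\vee$. Theorem~\ref{thm:complete-iso} identifies these completions, equivariantly in the line bundles $\ml_\chi$, with the corresponding completions of the additive algebras together with all the idempotents $e_C$; the additive Tor vanishing therefore transfers directly, closing the proof.
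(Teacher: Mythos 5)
Your proposal is correct and takes essentially the same approach as the paper. The paper's own proof is a one-liner citing \cite[Theorem 4.15]{WebcohI} for the additive analogue and then saying, ``as in Theorem~\ref{thm:tilting}, the result can be checked on fiberwise completions,'' which is exactly the reduction you spell out: the formal axioms hold by idempotent bookkeeping, the substantive ones reduce to Tor vanishing for the wall-crossing bimodules, and Theorem~\ref{thm:complete-iso} transports that vanishing from \cite[Lem.~4.6]{WebcohI} to the multiplicative algebras. Your write-up is a faithful unwinding of the paper's terse argument, making explicit which axioms are formal and which actually require the completion comparison.
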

\begin{proof}
Note that the additive analogue of this structure is shown to be a perverse schober in \cite[Theorem 4.15]{WebcohI}.  This is done by constructing an equivalence of finite dimensional $A_C$-modules with modules over certain quantizations of hypertoric varieties in characteristic $p$.  We point this since the proof of the schober conditions \cite[Theorem 4.11]{WebcohI} uses this language.  However, the proof just uses properties of wall-crossing functors which transport through all equivalences in the picture, exactly those appearing in Lemma \ref{lem:bim-m-a}.

We first need to establish the required isomorphisms of functors for conditions of (3-4).  As shown in the proof of  \cite[Theorem 4.11]{WebcohI} (points (1) and (3); note that the schober conditions are numbered differently here), these isomorphisms are induced by multiplication maps as in Lemma \ref{lem:bim-m-a}(1).  The same is true of the isomorphism needed in (i) (which corresponds to point (2) in the proof of \cite[Theorem 4.11]{WebcohI}).

Finally, point (ii) (which matches point (4) in the proof of \cite[Theorem 4.11]{WebcohI}) simply requires a wall-crossing functor to be an equivalence of categories, which we can transport from the additive case to the multiplicative by Lemma \ref{lem:bim-m-a}(2).
\end{proof}


\subsection{The A-side schober}\label{subsec:a-schober}

We now propose a construction of a schober starting from a family of weakly Weinstein manifolds; we do not show that this construction satisfies the schober axioms in general, but we check that in the case of interest here, we do obtain a schober, which is moreover equivalent to the B-side schober defined above.


Constructible-sheaf methods for the construction of A-side perverse schobers have appeared before in \cite{Nad-LG,Don-Ku,Nad-wall}, though always in the 1-dimensional case (the first two in the double-cut realization on $\C$, the last in a slightly more involved arrangement). We will be guided in particular by the geometry of \cite{Nad-LG}. 
%
%
%

Thus, let $f:\sfamq\to \ft^\vee$ be a symplectic fibration of weakly Weinstein manifolds over a complexified vector space $\ft^\vee=\ft^\vee_\R\otimes\C,$ whose set of critical values is a real hyperplane arrangement $\Dper$ in $\ft^\vee_\R\subset\ft^\vee.$ We assume that the stratification induced by this arrangement lifts to a stratification of the critical locus by weakly Weinstein submanifolds. 
\begin{definition}
	A {\bf schober skeleton} for the family $\sfamq$ is a Lagrangian $\bsL\subset \sfamq$ with the following properties:
	\begin{itemize}
		\item For any point $\parr\in \ft^\vee_\R$ not lying in a hyperplane, the fiber $\bsL_\parr:=\bsL\cap \sfam_\parr$ of $\scrL$ is a skeleton of the fiber $\sfam_\parr$.
		\item For any exit path $\parr'\to\parr$ in $\ft^\vee_\R$ with respect to the stratification of $\ft^\vee_\R,$ the fiber $\bsL_{\parr'}$ is the symplectic parallel transport of $\bsL_{\parr}$ along the path. 
	\end{itemize}
\end{definition}

For $C$ a stratum in $\ft^\vee_\R,$ we write $\sfamq_C$ for the preimage $f^{-1}(\text{star}(C))$ of the star of $C$, and $\bsL_C:=\bsL\cap \sfamq_C$ for the part of $\bsL$ lying over the star of $C.$  We expect that the following procedure will produce the A-side perverse schober associated to the family $\sfamq$:
\begin{definition} \label{def:aside-schober}
    To a schober skeleton $\bsL$ as above, we associate the following set of data:
    \begin{itemize}
        \item To a stratum $C$ of $\ft^\vee_\R,$ we assign the category $\scrD_C=\mu\Shc_{\bsL}(\bsL_C)$ of sections of the ``microlocal sheaves'' cosheaf $\mu\Shc_{\bsL}$ over $\bsL_C.$
        \item For an inclusion $C'\leq C$ of strata, we define restriction functors $\delta_{C'C}:\scrD_C\to\scrD_{C'},$ by functoriality of the cosheaf $\mu\Shc_{\bsL}$ along the inclusion $\bsL_{C}\to\bsL_{C'}.$
    \end{itemize}
\end{definition}

Note that the generalization functors are fixed as the adjoints of the restrictions, so the above data is sufficient to characterize a schober. However, we do not check that the resulting structure does in fact satisfy the requirements in the definition of a perverse schober.

Now we are ready to almost ready to begin considering the situation of interest to us: the family of affine multiplicative hypertoric varieties indexed by complex moment map parameter $\parone.$ This space is obtained from $\TAno$ by performing a complex Hamiltonian reduction with 0 real moment map parameter and unfixed complex moment map parameter; in other words, it is the GIT quotient $\TAno/\!\!/ T.$ This space has a residual complex moment map to $T^\vee.$ Since we would like a schober on $\ft^\vee$ rather than $T^\vee,$ we must pull back this space along the universal cover $\ft^\vee\to T^\vee$; we denote the resulting pullback by $\sfamq.$

As desired, this space carries a map $\sfamq\to \ft^\vee,$ with fiber $\sfamq_\parone$ over $\parone\in \ft^\vee$ given by the multiplicative hypertoric variety $\mht_{(\parone, 1)}$. The critical locus of this map is the hyperplane arrangement $\Dper$; the fibers become progressively more singular as one moves to higher-codimension faces of the arrangement.

Now we would like to apply Definition~\ref{def:aside-schober}, with schober skeleton given fiberwise by the usual skeleton $\LL$ on smooth fibers, and its degenerations on singular fibers. However, this doesn't literally make sense as stated, since the total space of the family $\sfamq$ is not smooth.

Nevertheless, $\sfamq$ is the Hamiltonian reduction of a smooth space. Write $\sfam$ for the pullback of $\TAno$ along the universal cover $\ft^\vee\to T^\vee$: now, by definition, $\sfamq$ is obtained from $\sfam$ as a Hamiltonian reduction by the compact torus $T_\R.$ Using the heuristic that the geometry of $\sfamq$ is the $T$-equivariant geometry of $\sfam,$ we can  associate a schober to $\sfamq$ by starting with a schober associated to $\sfam$ and passing to $T$-invariants.

So suppose that $\scrL$ is a schober skeleton for $\sfam$ which is invariant under the action of the torus $T_\R$ on $\sfam.$ (In our case, we have such a Lagrangian, given by the preimage of $\bsL$ under the $T_\R$-quotient.)

Then for any $T_\R$ invariant open subset $U$, the category $\mu\Sh^c_\scrL(U)$ is graded by the character lattice of $T$, and we can pass to the $T$-invariant category $\mu\Sh^c_\scrL(U)^T$ by picking out the zero-weight part. In other words, $\mu\Sh^c_\scrL(U)$ is the category of modules over an algebra $A$ with a natural map from the group algebra $\C[\pi_1(T)],$ and we define $\mu\Sh^c_{\scrL}(U)^T$ as modules over the algebra obtained from $A$ by imposing the relation $\gamma=1$ for all monodromies $\gamma.$

We therefore define the conjectural schober associated to the family $\sfamq$ as follows:

\begin{definition} \label{def:eqvt-aside-schober}
    Suppose that $\scrL$ is a schober skeleton for $\sfam$ which moreover is $T_\R$-equivariant. Then we associate to $\scrL$ the following set of data:
    \begin{itemize}
        \item To a stratum $C$ of $\ft^\vee_\R,$ the we assign the category $\scrD_C=\mu\Shc_{\scrL}(\scrL_C)^T.$
        \item For an inclusion $C'\leq C$ of strata, we define restriction functors $\delta_{C'C}:\scrD_C\to\scrD_{C'},$ by functoriality of the cosheaf $(\mu\Shc_{\scrL})^T$ along the inclusion $\scrL_{C}\to\scrL_{C'}.$
    \end{itemize}
\end{definition}
As for Definition~\ref{def:aside-schober}, we will not show that this defines a schober in general. However, for family $\sfam$ and equivariant schober skeleton $\scrL$ as above, we can check that this does define a schober by computing it explicitly:
\begin{theorem}\label{thm:schobers}
    For $\sfam$ and $\scrL$ as above, the data defined by Definition~\ref{def:eqvt-aside-schober} are isomorphic to the data of the B-side perverse schober from Definition~\ref{def:quotientschobquiver}. (In particular, Definition~\ref{def:eqvt-aside-schober} does in fact define a perverse schober in this case.)
\end{theorem}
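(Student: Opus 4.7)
The plan is to verify both the equality of categories $\scrD_C \cong A_C\dgperf$ at each stratum $C$ and the compatibility of their transition functors, by extending the quiver-theoretic description of Section~\ref{sec:global} from generic parameters $\parone$ to the whole stratification $\Dper$.

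First, on a top-dimensional stratum $C$, for any $\parone \in C$ the skeleton $\scrL_C$ is the $T_\R$-saturation of the Weinstein skeleton $\LL \subset \mht_{(\parone,\tone)}$ produced by Proposition~\ref{prop:mht-skel}. Thus $\mu\Shc_\scrL(\scrL_C)^T$ reduces to $\mu\Shc_\LL(\LL)$, which by Corollary~\ref{cor:Fuk-Q} is equivalent to $\overline{\cQ}_\parone\dgperf$, and by the discussion following Definition~\ref{def:quotientschobquiver} together with Theorem~\ref{th:tiltingiso} this in turn matches $A_\parone\dgperf \cong A_C\dgperf$. The generic case is therefore essentially already in place.

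For a lower-dimensional stratum $C$ the extension proceeds in parallel. The irreducible toric components of $\scrL_C$ are those orbifold toric varieties $\mathfrak{X}_\Bx$ whose moment polytope meets $U_C$, and they are glued along the combinatorics of the subarrangement $\scrH_C \subset U_C$. Applying the perverse-sheaf description of Proposition~\ref{prop:pervt} and Theorem~\ref{thm:dupont} componentwise and assembling them via the cosheaf-limit argument used to prove Theorem~\ref{th:sheaf-match} produces an equivalence $\mu\Shc_\scrL(\scrL_C) \cong \widetilde{\scrQ}_C\dgperf$. Working on the universal cover, the deck group $\fg^\vee_\Z$ of $\sfamq$ implements the passage from $\widetilde{\scrQ}_C$ to $\bqsch_C$, and the further $T_\R$-invariants impose $\gamma = 1$ for every monodromy in $\pi_1(T)$, giving precisely $A_C\dgperf$. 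For a face incidence $C' \leq C$, cosheaf functoriality along the inclusion $\scrL_C \hookrightarrow \scrL_{C'}$ becomes the subquiver inclusion $\bqsch_C \hookrightarrow \bqsch_{C'}$, and the induced transition functor and its adjoint are tensor products with exactly the idempotent bimodule ${}_{C'}T_C = e_- A_C e_+$ of Definition~\ref{def:quotientschobquiver}. Thus both the categories and the transition functors match, and the schober axioms, verified for the B-side in Theorem~\ref{thm:b-schober}, transfer to the A-side.

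The main obstacle is the rigorous treatment of microlocal sheaves on the degenerate skeleton $\scrL_C$ for non-generic $C$, where the arrangement $\Bper_\parone$ fails to be simple and the toric components $\mathfrak{X}_\Bx$ no longer meet cleanly. The cleanest way around this is to argue fiberwise on the residual moment map $\mmm_{\C^\times}\colon \mht \to G^\vee$ and to invoke the completion isomorphism of Theorem~\ref{thm:complete-iso}: the corresponding calculation for additive hypertoric varieties was carried out in \cite[Theorem~4.15]{WebcohI}, so transporting the equivalence across formal neighborhoods reduces the multiplicative non-generic case to its known additive analog.
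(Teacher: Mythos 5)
Your overall outline is in the same spirit as the paper's argument: compute $\mu\Shc_\scrL(\scrL_C)$ stratum by stratum, identify it with the quiver algebra $A_C$, and observe that the specialization functors correspond to subquiver inclusions. However, you are missing the one step that actually makes the computation go through, and the fix you propose for the resulting gap does not apply.

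The paper's proof works on the further cover $\sfam_{sc}$, the pullback of $\TAno$ along $\fd^\vee \to D^\vee$. Over this cover, the fibers are always $\TAno$, their skeleta are always glued copies of $(\mathbb{P}^1)^n$, and the controlling arrangement is always the periodic \emph{coordinate} arrangement $\scrH$ in $\fd^\vee_\R$ --- which is simple and has cleanly intersecting toric components regardless of the stratum $C$. The category $\mu\Shc_{\scrL_{sc}}(\scrL_{sc,C})$ is therefore always a quiver module category of the sort computed in Theorem~\ref{th:sheaf-match}, and one then descends: first to $\fg^\vee_\Z$-invariants, then to $T$-invariants (setting $\gamma=1$ for $\gamma\in\pi_1(T)$). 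Once you lift to $\sfam_{sc}$, the non-simpleness of $\Bper_\parone$ in $\fg^\vee_\R$ at non-generic $C$ never enters the picture, because the quotient-quiver data $\widetilde{\scrQ}_C$, $\bqsch_C$, $A_C$ are defined directly from $\scrH_C \subset \fd^\vee_\R$.

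Your proposed remedy for the non-generic case --- transferring across the completion isomorphism of Theorem~\ref{thm:complete-iso} to reduce to \cite[Theorem~4.15]{WebcohI} --- does not close the gap. That isomorphism is an algebro-geometric statement about coherent sheaves on formal neighborhoods of moment-map fibers; it is used in the paper precisely to prove B-side statements such as Theorem~\ref{thm:b-schober} and Theorem~\ref{thm:tilting} by transfer from the additive case. It says nothing about the Fukaya-categorical or microlocal-sheaf side, and in particular does not compute $\mu\Shc_\scrL(\scrL_C)$ for degenerate $\scrL_C$. The A-side computation must be done symplecto-geometrically, and the paper's lift to $\sfam_{sc}$ is the device that makes this tractable uniformly in $C$. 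Without it, your argument genuinely stalls at exactly the point you flag as an obstacle.

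A minor additional confusion: you describe the components of $\scrL_C$ as the toric varieties $\mathfrak{X}_\Bx$ with moment polytopes in $\fg^\vee_\R$. Those are components of the \emph{reduced} skeleton $\LL \subset \mht_{(\parone,\tone)}$; the components of $\scrL_{sc}$ (and of $\scrL$ after quotienting by $\fg^\vee_\Z$) over the universal cover are the $(\mathbb{P}^1)^n$'s indexed by chambers of $\scrH$ in $\fd^\vee_\R$, and the $T$-reduction is implemented algebraically by passing to the zero-weight part, not by literally replacing the components with their $T_\R$-quotients before applying Theorem~\ref{thm:dupont}.
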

\begin{proof}
First of all, to simplify the situation, we pass from $\sfam$ to its universal cover, which we can do relative to the map $\sfam\to \ft^\vee$, and work equivariantly with respect to the group of deck transformations. Thus, let $\sfam_{sc}$ be the pullback of $\TAno$ along the map $\fd^{\vee} \to D^{\vee}$ and $\scrL_{sc}$ the pullback of its skeleton.
These both carry a free fiberwise action of $\pi_1(D/T) = \fg^{\vee}_\Z,$ with respective quotients $\sfam$ and $\scrL$. 

The periodic hyperplane arrangement $\scrH$ in $\mathfrak{d}^{\vee}_\R$ which encodes the structure of the skeleton $\scrL_{sc}$ is particularly simple: the arrangement $\scrH$ is given by all integer translates of the coordinate hyperplanes. In other words, the skeleton $\scrL_{sc}$ is a $\Z^n$ worth of copies of $(\mathbb{P}^1)^n,$ glued together along their boundaries.

Let $C$ be a stratum in $\ft^{\vee}_\R$. Recall that we write $U_C$ for  the preimage of the star of $C$ under the projection map $\fd^\vee_\R\to \ft^\vee_\R$, and $\scrH_C$ for the hyperplane arrangement in $U_C$ defined by the intersection $U_C \cap \scrH$. This hyperplane arrangement is the moment-map X-ray for the Lagrangian $\scrL_{sc, C}.$ 
Thus, for each $C,$ the category $\mu\Sh^c_{\scrL_{sc}}(\scrL_{sc, C})$ is the dg category of modules over the quiver with relations $\scrQ_C$ appearing in Definition \ref{def:quotientschobquiver}. Passing to $\fg_\Z^\vee$-invariants, we conclude that the category $\mu\Sh^c_{\scrL}(\scrL_{C})$ is the dg category of modules over the quotient of this quiver by $\fg^{\vee}_\Z$.


Finally, for each $C$, we need to impose we need to impose $T$-equivariance on the category $\scrQ_C/\fg^{\vee}_\Z\dgmod.$ As in the discussion preceding Definition~\ref{def:eqvt-aside-schober}, this is accomplished by imposing the relations $\gamma = 1$ for $\gamma \in \pi_1(T)$. Thus, we have an equivalence
\[\scrD_C \cong A_C \dgmod\]
between the category $\scrD_C$ which the equivariant A-side schober assigns to $C$ and the category $A_C\dgmod$ assigned to $C$ by the B-side schober of Definition~\ref{def:quotientschobquiver}.
And in either case, for an inclusion $C'\leq C$ of strata, the specialization functor $\delta_{CC'}$ is obtained from the inclusion of quivers $\scrQ_C\hookrightarrow \scrQ_{C'}$ corresponding to the inclusion $\scrH_C\subset \scrH_{C'}$ of hyperplane arrangements. Since these specialization functors determine all the data of the perverse schober, we conclude that the two schobers are equivalent.
%
\end{proof}

\appendix
\renewcommand{\theequation}{\thesection.\arabic{equation}}
\section{Symplectic forms on Hamiltonian reductions}\label{sec:kirwan}
We recall here how a symplectic form is determined by a choice of Hamiltonian reduction parameter. The material here is mostly standard: see for instance \cite{guillemin-sternberg} for equivariant symplectic forms and \cite{kirwan} for the Kirwan map.

\begin{lemma}
Let $(X,\omega)$ be a symplectic manifold with a Hamiltonian action of a group $G$. Then the moment map $\mu:X\to \mathfrak{g}^\vee$ determines a lift of $\omega$ to an equivariant 2-form $\omega_\mu\in \Omega^2_G(X).$
\end{lemma}
\begin{proof}
Recall that the Cartan model for equivariant de Rham cohomology $\Omega^\bullet_G(X)$ presents it as the complex $(\Sym\mathfrak{g}^\vee\otimes \Omega^\bullet(X))^G,$ which we may understand as the space of equivariant polynomial functions $f:\mathfrak{g}\to \Omega^\bullet(X),$ equipped with differential
\[
(df)(\xi) =  d_{dR}(f(\xi))-\iota_\xi f(\xi).
\]
We define the equivariant symplectic form $\omega_\mu$ lifting $\omega$ by
\[
\omega_\mu(\xi):= \omega + \langle \mu,\xi\rangle.
\]
This equivariant form is closed, since
\begin{align*}
    (d\omega_\mu)(\xi) &= d_{dR}(\omega_\mu(\xi)) - \iota_\xi \omega_\mu(X)\\
    &= d_{dR}\omega + d_{dR}\langle \mu,\xi\rangle - \iota_\xi \omega\\
    &= 0
\end{align*}
because $\omega$ is closed and the other two terms cancel by the definition of moment map.
\end{proof}

Using the equivariant lift $\omega_\mu$ of $\omega,$ it is easy to describe the symplectic form $\omega_\alpha$ on a Hamiltonian reduction $X/\!\!/_\alpha G.$ Recall that the {\em Kirwan map} is the map
\begin{equation}\label{eq:kirwan}
\Omega^\bullet_G(X) \to \Omega^\bullet(X/\!\!/_{\!\alpha} G)
\end{equation}
given by pullback along the inclusion $X/\!\!/_\alpha G = \mu^{-1}(\alpha)/G \hookrightarrow X/G.$ (Alternatively, using the Kempf-Ness theorem to present the Hamiltonian reduction as a GIT quotient, the Kirwan map is the pullback along the inclusion of the $\alpha$-semistable locus into the stack $X/G$.) We begin with the case where the Hamiltonian reduction parameter $\alpha$ is zero:
\begin{lemma}
The symplectic form on $X/\!\!/ G$ is the image of $\omega_\mu$ under the Kirwan map \eqref{eq:kirwan}.
\end{lemma}
\begin{proof}
This follows directly from the definition of the symplectic form on a Hamiltonian reduction.
\end{proof}
One may therefore also obtain the symplectic form on reductions at non-zero moment-map parameter $\alpha\in (\mathfrak{g}^\vee)^G$ by shifting the symplectic form $\omega_\mu.$
\begin{corollary}
    The symplectic form on $X/\!\!/_\alpha G$ is the image of $\omega_{\mu-\alpha}$ under the Kirwan map \eqref{eq:kirwan}.
\end{corollary}

    We now explain how a complexified moment-map parameter can be used to specify a complexified symplectic form on a Hamiltonian reduction.

\begin{definition}
    Let $(M,\omega)$ be a symplectic manifold, and let $B\in H^2(M;\R).$ Then we say that $B$ is a {\bf B-field} for $M$, and that $\omega+iB$ is a {\bf complexified symplectic form}.
\end{definition}

We would like to construct a B-field on the Hamiltonian reduction of a manifold $X$ by a compact torus $G=T_\R$. Let $\Phi:H^2_{T_\R}(\text{pt}) \to H^2(X/\!\!/_\alpha T_\R)$ denote the precomposition of the Kirwan map \eqref{eq:kirwan} with the pullback $H^2_{T_\R}(\text{pt})\to H^2_{T_\R}(X),$ and note that the domain of $\Phi$ may be identified with $\mathfrak{t}^\vee.$

\begin{definition}\label{def:bfield-reduction}
    Let $T_\R$ be a torus with a Hamiltonian action on a K\"ahler manifold $X$,
 and let $\alpha\in T^\vee$ be an element of the complexified dual torus, which we write as $\alpha=\gamma\cdot \exp(\delta),$ for $\delta\in \mathfrak{t}_\R^\vee$ and $\gamma \in T_\R^\vee.$ Then we write
 \[
    X/\!\!/_\alpha T_\R := (X/\!\!/_\delta T_\R, \omega_\delta + i\Phi(\widetilde{\gamma}))
 \]
 for the symplectic manifold $(X/\!\!/_\delta T_\R,\omega_\delta)$ equipped with B-field $\Phi(\widetilde{\gamma}),$ where $\widetilde{\gamma}\in \mathfrak{t}^\vee_\R$ is any lift of $\gamma$ along the covering map $ \mathfrak{t}^\vee_\R\to T^\vee_\R.$
 \end{definition}
 As the notation suggests, what is fundamental for us is not $\widetilde{\gamma}$ but rather the class $\gamma\in T_\R^\vee = H^2_{T_\R}(\text{pt}; \R/\Z),$ since the B-field contributes to Floer-theory calculations through its exponential, which is $\Z$-periodic.

\section{SYZ duality}\label{sec:syz-appendix}

From the perspective of SYZ mirror symmetry, the most important feature of the multiplicative hypertoric varieties $\mht_{(\parone,\partwo)}$ is the presence of a special Lagrangian torus fibration, whose properties we describe here. We begin as usual with the basic case.

\begin{example}[{\cite[Section 5.1]{Auroux07}}]\label{ex:torusfib}
Consider the quasi-hyperhamiltonian moment map $\mmm_\Hbb:(T^*\C)^\circ \to \C^\times\times \R,$ which we can present more invariantly as a map with codomain $\R\times S^1\times \R,$ given by
\[ 
(\sz,\sw)\longmapsto
(\log|\sz\sw+1|,\Arg(\sz\sw+1),|\sz|^2-|\sw|^2).
\]
Consider the map
\[\mmm_{I,\C}:(T^*\C)^\circ\to \R^2, \qquad (\sz,\sw)\mapsto (\log|\sz\sw+1|,|\sz|^2-|\sw|^2)\] obtained by composing $\mmm$ with the projection onto the two copies of $\R.$ The fiber of this map over any nonzero point is a special (with respect to holomorphic volume form $\Omega_K$) Lagrangian 2-torus, and the fiber over the central point is a nodal curve. 
\end{example}

The paper \cite{Auroux07} describes a mirror construction beginning from the symplectic manifold $\TAo$ and producing a moduli space of Lagrangian tori. This construction was pursued systematically in a more general context in \cite{AAK}, and we will recall here the main theorem of that paper as it relates to the spaces $\TAo$ and $\TAno.$ The notion of SYZ duality considered in that paper was as follows:

\begin{definition}[{\cite[Definition 1.2]{AAK}}]\label{defn:syz}
    For $X$ a K\"ahler manifold equipped with a Lagrangian torus fibration $\pi:X\to B,$ we say that $Y$ is {\em SYZ mirror} to $X$ if $Y$ is a completion of a moduli space of unobstructed torus-like objects of $X$ supported on SYZ fibers.
\end{definition}

The work \cite{AAK} contains several different ``flavors'' of SYZ mirror theorems; we are interested in the one concerning conic bundles with a prescribed degeneracy locus. Let
\[
X^0=\{(\mathbf{x},\mathbf{y},\mathbf{z})\in (\C^\times)^n\times \C^d\times \C^d\mid f_i(\mathbf{x})=\sy_i\sz_i\}
\]
be the iterated conic bundle over $\C^n$ degenerating where $f_i(\mathbf{x})=0.$

Its mirror will be described as the complement of a hypersurface in a toric variety. Let $\varphi_i$ be the tropicalization of the function $f_i,$ and define a polytope
\[
\Delta_Y = \{(\mathbf{\xi}, \eta_1,\ldots,\eta_d)\in \R^n\times \R^d\mid \eta_i\geq \varphi_i(\mathbf{\xi}).\}
\]
Let $Y$ be the toric variety with moment polytope $\Delta_Y$, and for $1\leq i\leq d,$ let $v_i:Y\to \C$ be the toric monomial with weight $(\mathbf{0},0,\ldots,1,\ldots,0),$ whose only nonzero entry is the $(n+i)$th entry. Define the function $w_i:Y\to \C$ by $w_i=v_i-1,$ and finally let
\[
Y^0 = Y\setminus \left\{\prod_{i=1}^d w_i =0\right\}
\]
be the subset of $Y$ where all the $w_i$ are nonzero.

\begin{theorem}[{\cite[Theorem 11.1]{AAK}}]\label{thm:aak-syz}
    $Y^0$ is SYZ mirror to the iterated conic bundle $X^0$.
\end{theorem}

We will be interested in a special case of this theorem:
\begin{corollary}\label{cor:tan-syz}
    The space $\TAno$ is SYZ mirror to itself.
\end{corollary}
\begin{proof}
  First, consider the space $X^0$ as above, in the case where $d=n$ and the function $f_i(\mathbf{x})$ is defined by $f_i(\mathbf{x})=\sx_i+1.$ In this case, the space $X^0$ is evidently equal to $\TAno,$ so we need only to show that $Y^0$ is also isomorphic to $\TAno.$
  
  The tropicalization $\varphi_i$ of $f_i$ is given by $\varphi_i(\mathbf{\xi})=\max(\xi_i,0).$ The resulting moment polytope $\Delta_Y$ is abstractly isomorphic to the positive orthant in $\R^{2n},$ so that the toric variety $Y$ will be abstractly isomorphic to $\C^{2n} \cong T^*\C^n.$ However, because the embedding of the polytope $\Delta$ is skewed, the toric monomial $v_i$ will be $\sz_i\sw_i$ (in the natural coordinates on $T^*\C^n$), and therefore the complement $Y^0$ will be the locus in $\C^n$ where $\sz_i\sw_i-1\neq 0$ for all $i$.
\end{proof}

Now recall that the multiplicative hypertoric variety $\mht_{(\parone,\partwo)}$ is obtained from the variety $\TAno$ in two steps: (1) Imposing the complex group-valued $T$-moment map equation $\mmm_{\C^\times}=\parone,$ and (2) Performing a Hamiltonian reduction by the real torus $T_\R$ at character $\partwo.$ To understand the relation between these operations, note that the Hamiltonian $T_\R$ action occurs within fibers of the Lagrangian torus fibration on $\TAno$ (which is a product of the fibration described in \ref{ex:torusfib}).

\begin{definition}\label{def:mirrorfib}
    Let $\pi:X\to B$ be a Lagrangian torus fibration (with singular fibers), and suppose that $X$ admits a Hamiltonian action by a torus $T_\R$ which moreover acts within fibers of $\pi.$ This condition ensures that each fiber of $\pi$ is contained in some level set of the moment map $\mu_\R$ for the torus action. Therefore, to a point in the SYZ mirror $X^\vee$ to X (in the sense of Definition \ref{defn:syz}), representing a torus fiber of $\pi$ (equipped with some local system) contained in some level set $\{\mu_\R = \delta\},$ we can associate the value $\exp(\delta)\cdot \gamma,$ where $\gamma\in T_\R^\vee$ is the holonomy of the local system along the $T_\R$-orbit, and assembling these together, we obtain a function $f:X^\vee\to T^\vee$ from the SYZ mirror of $X$ to the complexified dual torus of $T_\R.$ We say that this map is the {\bf mirror fibration} to the Hamiltonian $T_\R$ action on the SYZ fibration $X\to B.$
\end{definition}

This definition does capture essential features of our situation: the proof of \cite[Theorem 11.1]{AAK} proves more than \ref{thm:aak-syz}, but also establishes the following fact:
\begin{proposition}
Under the SYZ duality of Corollary \ref{cor:tan-syz}, the map $\TAno\to (\C^\times)^n$ given by $(\sx_i\sw_i-1)_{i=1}^n$ is the mirror fibration to the Hamiltonian $T^n$ action on $\TAno.$
\end{proposition}

Definition \ref{def:mirrorfib} is meant to suggest the following conjecture about SYZ mirror symmetry (whose ``HMS'' counterpart is \cite[Conjecture 4.2]{Teleman14}):
\begin{conjecture}\label{conj:syz-duality}
    Let $\pi:X\to B, T_\R,$ and $f:X^\vee\to T^\vee$ as in Definition \ref{def:mirrorfib}. Then a Hamiltonian reduction $X/\!\!/_\alpha T_\R$ is SYZ mirror (in the sense of Definition \ref{defn:syz}) to a fiber $f^{-1}(\alpha')$ of $f.$
\end{conjecture}

\begin{remark}
Another way to conceptualize the above conjecture is through the following observation: the Hamiltonian reduction $X/\!\!/_\alpha T_\R$ imposes an equation on the SYZ base and imposes a quotient on SYZ fibers, whereas the equation $\{f = \alpha'\}$ imposes an equation on the SYZ base and imposes an equation on the SYZ fibers. In other words, both operations restrict to a $\text{dim}(T_\R)$-codimension subset of the SYZ base, and on torus fibers they implement the dual operations of passage to subsets or quotients.
\end{remark}

Note that the parameter $\alpha'$ lives in $T^\vee,$ and (using the constructions of Appendix \ref{sec:kirwan} to specify a complexified symplectic form), the parameter $\alpha$ can also be taken to live in $T^\vee.$ Therefore it is natural to guess that one may take $\alpha=\alpha'$ in Conjecture \ref{conj:syz-duality}. Unfortunately, this conjecture is too na\"ive, as it fails to account for corrections coming from disks with boundary on SYZ fibers; see \cite[Section 5]{AAK} for a counterexample to this na\"ive conjecture.)

In our setting, we expect that such instanton corrections do not occur. Therefore, applying Conjecture \ref{conj:syz-duality} twice, with $\alpha = \alpha',$ we conjecture the following:
\begin{conjecture}
    The multiplicative hypertoric varieties $\mht_{(\parone,\partwo)}$ and $\mht_{(\partwo,\parone)}$ are SYZ mirror in the sense of Definition \ref{defn:syz}.
\end{conjecture}

The paper \cite{LZhypertoric} proves a form of SYZ mirror symmetry for hypertoric varieties. Those authors begin with an additive hypertoric variety $\aht$ and then compute, using the techniques of \cite{AAK}, that the complement of a divisor $\divisor$ in $\aht$ is SYZ mirror to a multiplicative hypertoric variety $\mht$.
On the other hand, the above considerations suggest that a multiplicative hypertoric variety $\mht$ should be SYZ mirror to another multiplicative hypertoric variety. The following conjecture is therefore natural:
\begin{conjecture}\label{conj:LZ}
    Let $(\aht, \divisor)$ be the additive hypertoric variety and divisor which are constructed in \cite{LZhypertoric} with the property that the complement $\aht\setminus \divisor$ is a mirror to a multiplicative hypertoric variety. Then $\aht\setminus \divisor$ is itself a multiplicative hypertoric variety.
\end{conjecture}




\section{Proof of Lemma \ref{lem:quiver-cover}}
\label{appendix:holim-proof}
  \section*{by Laurent C\^ot\'e, Benjamin Gammage, Justin Hilburn, Michael McBreen, and Ben Webster}

This appendix is devoted to a proof of Lemma \ref{lem:quiver-cover}. In fact, we will formulate and prove the slightly more general Lemma \ref{lem:quiver-covergeneralA}, in which $\textgoth{A}_\parone$ is replaced by a general arrangement.   

\begin{remark}
    We will avoid mentioning microlocal sheaves and hypertoric spaces in the argument below. However, the reader may wish to keep the following picture in mind.

    The skeleton $\widetilde{\cL}$ carries a sheaf $\mu Sh$ of dg categories, as well as a sheaf $\muPerv$ of abelian categories. The ultimate objective is to show that $\mu Sh(\widetilde{\cL})$ is obtained from $\muPerv(\widetilde{\cL})$ by taking the dg category of complexes. To do so, one may push forward both sheaves along the torus map $\mu_\R : \widetilde{\cL} \to \mathfrak{g}^{\vee}_\R$. The question then reduces to a comparison of sheaves on a vector space stratified by a hyperplane arrangement. This appendix carries out such a comparison. 
\end{remark}
\subsection{Categories from arrangements}
We fix a real (affine) hyperplane arrangement $\textgoth{A} \subset V$. We allow $\textgoth{A}$ to have infinitely many hyperplanes, but we ask that any compact subset of $V$ intersect only finitely many of these.

This gives a stratification of $V$ whose strata are the (closed) faces $F$ of the various chambers of $\textgoth{A}$. We write $\partial F \subset F$ for the union of all smaller dimensional subfaces and $F^{\star} = F \setminus \partial F$.

We associate two arrangements to a face $F$:
\begin{enumerate}
    \item $\textgoth{A}^F$ is obtained by deleting all the hyperplanes which do not contain $F$.
    \item  Let $V_F \subset V$ be the affine subspace spanned by $F$. Then the image of $\textgoth{A}^F$ in $V/V_F$ defines a central arrangement $\textgoth{A}_F$. 
\end{enumerate}    
Roughly speaking, $\textgoth{A}^F$ describes the arrangement in a neighborhood of a point in $F^{\star}$, whereas $\textgoth{A}_F$ describes a transversal slice to this point.

There are natural bijections between the chambers of $\textgoth{A}$ containing $F$, the chambers of $\textgoth{A}^F$, and the chambers of $\textgoth{A}_F$. We will sometimes abuse notation and use the same symbol for a chamber in all three of these arrangements. 

Definition \ref{def:abstract-quiver} associates a quiver with relations $\cQ$ to the hyperplane arrangement $\textgoth{A}$. The quiver $\cQ^F$ of $\textgoth{A}^F$ is given by the full subquiver of $\cQ$ obtained from the chambers containing $F$. It is naturally identified with the quiver of $\textgoth{A}_F$. 

If $F \subset F'$ is an inclusion of faces, then $\cQ^{F'}$ is a full subquiver of $\cQ^F$, inducing a restriction isomorphism $\cQ^F\dgmod\to \cQ^{F'}\dgmod.$ 

Denote by $\bP$ the face poset of the hyperplane arrangement. We are interested in the homotopy limit of dg categories $\varprojlim_{F\in \bP}\cQ^{F'} \dgmod.$ An object of this limit consists of a collection $(M_F)_{F\in \bP}$ of representations of the various subquivers $M_F,$ together with some gluing data.

If representations of $\cQ^F$ and $\cQ^{F'}$ are both restrictions of the same representation of the full quiver $\cQ,$ then their further restrictions to any shared face of $F,F'$ are canonically identified, so that we have a functor
\begin{equation}\label{eq:limit-functor}
\Phi:\cQ\dgmod\to \varprojlim_{F\in\bP}\cQ^{F}\dgmod.
\end{equation}

The goal of this appendix is to prove the following.
\begin{lemma}\label{lem:quiver-covergeneralA}
The functor $\Phi$ defines a canonical equivalence
between the dg category of $\cQ$-modules and the homotopy limit of the dg categories of $\cQ^F$-modules. 
\end{lemma}

Our strategy is to reduce this claim to a calculation involving concrete objects on both sides. Given a chamber $\Delta$, restriction $M \to M_\Delta$ defines a functor $\cQ \dgmod \to \C[\pi_1 T]\dgmod$, corepresented by an object $P_\Delta$. 

\begin{lemma}\label{lem:image-is-cocore}
    The functor on $\varprojlim_{F\in\bP}\cQ^{F} \dgmod$ that takes an object $(M_F)$ to $M_\Delta$ is corepresented by $\Phi(P_\Delta)$.
\end{lemma}
Before we move on to the proof, we note that it implies our main result:
\begin{proof}[Proof of Lemma \ref{lem:quiver-covergeneralA} given Lemma \ref{lem:image-is-cocore}]
    Lemma \ref{lem:image-is-cocore} proves that $\Phi$ is essentially surjective, since the objects corepresenting $(M_F)\mapsto M_\Delta$ jointly generate the target, and it also implies that $\Phi$ induces equivalences $\Hom(P_\Delta,P_{\Delta'})\simeq \Hom(\Phi(P_\Delta),\Phi(P_\Delta)).$ Since the domain of $\Phi$ is generated by the $P_\Delta,$ we are done.
\end{proof}

To prove Lemma \ref{lem:image-is-cocore}, it will be helpful to give a more geometric interpretation of our set-up. The abelian category of $\cQ^F$-modules is the stalk along $F$ of a sheaf of abelian categories $\cP(-)$ on the vector space $V$ equipped with its usual topology, locally constant along the arrangement $\textgoth{A}$. We have $\cP(V) = \cQ\mmod$. Consider the associated presheaf $DG(\cP(-))$ of dg-categories and its sheafification $\cP_{\operatorname{dg}}(-)$. Then the right-hand side of \eqref{eq:limit-functor} is simply $\cP_{\operatorname{dg}}(V)$, and the lemma claims that the natural map is an isomorphism. 

Consider the constant sheaf of rings $\cR$ on $V$ with fiber $\C[\pi_1T]$. 
If $M,N$ are two objects of $\cP_{\operatorname{dg}}(V)$, then $\Hom(M,N)$ is the hypercohomology of a complex $\cH(M,N) \in D(\cR)$, locally constant with respect to the face stratification of $\textgoth{A}$ and characterized by
\begin{equation} \label{eq:microhomsheaf} \cH(M,N)_F = \Hom_{\cQ^F\dgmod}(M_F, N_F). \end{equation} 
with the natural restriction maps. Here $\cH(M,N)_F$ denotes the stalk of $\cH$ along the interior of the face $F$. 

Now, fix an object $M$ of $\varprojlim\cQ^F\dgmod$ and a chamber $\Delta$, and let $\cH := \cH(\Phi(P_\Delta), M)$. 
We have \begin{equation} \label{eq:whatisH} \cH_{\Delta} = M_\Delta. \end{equation} 
Therefore, Lemma \ref{lem:image-is-cocore} is equivalent to proving that $H^*(V,\cH) = \cH_{\Delta}$.  We will show this below in Corollary~\ref{cor:image-is-cocore2},
whose proof will occupy the rest of the appendix.

\subsection{A partial order on faces}
We start by giving an explicit description of the objects $P_\Delta$. 
\begin{lemma}\label{lem:explicit-projective}
  For each chamber $\delta$ of $\cQ$, we have a natural identification $(P_\Delta)_{\delta} = \C[\pi_1 T]$. Given adjacent chambers $\delta,\delta'$ separated by a hyperplane $H$, the map 
  \[ (P_\Delta)_{\delta} \to (P_\Delta)_{\delta'} \]
  equals the identity if $\delta$ and $\Delta$ are on the same side of $H$ and equals $\gamma_H - \id$ otherwise.
\end{lemma}
\begin{proof}
    This is essentially the calculation which we have already performed in the proof of Theorem \ref{th:tiltingiso}; it follows from the fact that the relations of $\cQ$ allow any path between two vertices in the quiver to be shortened to a ``taut path'' in the terminology of \cite{GDKD}--one which passes through no hyperplane more than once--at the cost of picking up some factors of $(1-\gamma_H)$ when we cancel out a back-and-forth trip across a hyperplane. As a result, the algebra 
    $\cQ$
    has a $\cQ$-bimodule decomposition
    \[
    \cQ\simeq \bigoplus_{\Delta,\Delta'}e_{\Delta'} \cQ e_{\Delta} \simeq \bigoplus e_{\Delta'} \C[\pi T]e_{\Delta},
    \]
    with the projective $P_{\Delta}$ given by
    \[
    P_{\Delta}= \cQ e_{\Delta} = \bigoplus_{\Delta'} e_{\Delta'} \C[\pi_1 T]e_\Delta.\qedhere
    \]
\end{proof}
Lemma \ref{lem:explicit-projective} motivates the following definition. Given a pair of faces $F,F'$ such that $F' \subset F$, we write $F' <_\Delta F$ if there exists a collection of hyperplanes $H_j$ in $\textgoth{A}$, none of which contain $F$, such that:
\begin{enumerate}
    \item  $F' = F \cap H_1 \cap \cdots  \cap H_l$.
    \item The face $F$ and the chamber $\Delta$ are on the same side of each $H_j$.
\end{enumerate}  
When the choice of $\Delta$ is clear, we simply write $F' < F$. This defines a partial order on faces.

\begin{lemma} \label{lem:partialorders} \leavevmode
\begin{enumerate} 
    \item Every face $F$ is contained in a unique chamber $\Delta(F)$ such that $F \leq \Delta(F)$. \label{item1}
    \item If $F \leq F'$, then $\Delta(F) = \Delta(F')$. \label{item2}
    \item The partial order on $\textgoth{A}^F$ defined by $\Delta(F)$ equals the partial order obtained by restriction from $\textgoth{A}$.\label{item3}
\end{enumerate}
\end{lemma}
\begin{proof}
We may suppose $F$ is not a chamber. Consider the set of hyperplanes containing $F$. Each such hyperplane divides $V$ into two halfspaces. The chambers containing $F$ are given by picking one of these half-spaces for each such hyperplane. The only chamber $\delta$ satisfying $F < \delta$ is obtained by choosing in each case the half-space containing $\Delta$. This proves the claim \eqref{item1}. Now suppose $F \leq F'$. Then we have $F' \leq \Delta(F')$ and thus $F \leq \Delta(F')$. Therefore, the claim \eqref{item2} follows from \eqref{item1}. Finally, the claim \eqref{item3} follows from the fact that $\Delta(F)$ and $\Delta$ lie on the same side of every hyperplane containing $F$. 
\end{proof}
Consider the image of $P_{\Delta}$ under the  restriction functor $\cQ\dgmod \to \cQ^F\dgmod$. By Lemma \ref{lem:explicit-projective}, it is determined by the restriction of the partial order on $\textgoth{A}$ to $\textgoth{A}^F$. By Lemma \ref{lem:partialorders}, this equals the partial order defined by $\Delta(F)$. The image must therefore be the projective $P_{\Delta(F)}.$ 
    This is the algebraic statement at the heart of our proof, but we need to restate it in sheaf-theoretic terms to carry the argument through, as we will do in the next section. 

 
\subsection{Receding sheaves}

Consider a general complex $\cG \in D(\cR)$, locally constant with respect to $\textgoth{A}$.\footnote{From now on, one may take $\cR$ to be an arbitrary constant sheaf of rings.}

\begin{definition}
We say $\cG$ {\em recedes} from $\Delta$ if, whenever $F' < F$, the generalization map $\cG_{F'} \to \cG_{F}$ is an isomorphism.
\end{definition}

\begin{lemma} \label{lem:Hrecedes}
 The sheaf $\cH$ recedes from $\Delta$.
 \end{lemma}
\begin{proof}
Combine \eqref{eq:whatisH} and Part (2) of Lemma \ref{lem:partialorders}.
\end{proof}


Let $ss(\cG) \subset T^* V$ be the singular support. It is a closed, conical Lagrangian subset contained in the union $\Lambda$ of the conormal bundles $T^*_{F^{\star}} V$, where $F$ ranges over the faces of $\textgoth{A}$. It follows from \cite[8.3.13]{KS94}, that $ss(\cG)$ is in fact a union of subsets of the following forms:
\begin{enumerate}
\item $F^{\star}$ where $F$ is a chamber. 
\item The closure of either component of $T^*_{F^{\star}} V \setminus F^{\star}$, where $F$ is face of codimension one. 
\item $T^*_{F^{\star}} V$ where $F$ is a face of codimension $\geq 2$.
\end{enumerate}

Given a face $F$, the chamber $\Delta(F)$ defines a closed cone in the normal bundle $T_{F^{\star}} V$. Let $\Delta(F)^{\circ} \subset T^*_{F^{\star}} V$ be the polar dual of this cone. This consists of covectors which pair non-positively with elements of the cone. Consider the union \begin{equation} \label{eq:unionoverfaces}
    \Delta(\Lambda) = \bigcup_{F} \Delta(F)^{\circ} \subset \Lambda.
\end{equation}
\begin{lemma}
\label{lem:microsupports}
Suppose that $\cG$ recedes from $\Delta$.  Then $ss(\cG)$ lies in the closure of $\Delta(\Lambda)$. 
\end{lemma}
\begin{proof}
Fix $(x,p) \notin \Delta(\Lambda)$. We must show $(x,p) \notin ss(\cG)$. We have $x \in F^{\star}$ for some face of $\textgoth{A}$. We may assume that $(x,p)$ does not lie in the closure of $T^*_{(F')^{\star}} V$ for any $F' \neq F$. 

First, we consider the specialization of $\cG$ to $F^{\star}$. This is a conical sheaf on the normal bundle $T_{F^{\star}} V$, which is defined in \cite[4.2]{KS94}. We denote it $\nu_{F} \cG \in D^b(T_{F^\star} V)$. 

Because $\cG$ is smooth with respect to a polyhedral stratification, its specialization may be characterized as follows. 
Fix a tubular neighborhood $U$ of $F^{\star} \subset T_{F^{\star}}V$. There exists a neighborhood $U'$ of $F^{\star} \subset V$ and an isomorphism $U \cong U'$ restricting to the identity on $F^{\star}$. Then $\nu_F \cG$ is the unique conical sheaf whose restriction to $U$ coincides with the restriction of $\cG$ to $U'$.

We can be more explicit. There is a natural identification $T_{F^{\star}} V \cong V/V_F \times F^{\star}$, where the latter carries the fiberwise arrangement $\textgoth{A}_F \times F^{\star}$. The specialization $\nu_F \cG$ is pulled back from a conical sheaf $\nu_F \cG^{'}$ on $V/V_F$, smooth with respect to $\textgoth{A}_F$. The stalks and generalization maps of $\nu_F \cG^{'}$ coincide with those of $\cG$, after identifying the faces of $\textgoth{A}_F$ with the faces of $\textgoth{A}$ containing $F$.




 To show that $(x,p) \notin ss(\cG)$, we must show that the stalk of the Fourier-Sato transform of $\nu_F \cG^{'}$ vanishes at $(x,q)$ for all $q$ in a neighborhood of $p$. The stalk is given by  
\[ H_c^*(K_q, \nu_F \cG^{'}) \]
where $K_q = \{ y \in V/V_F : \langle q, y \rangle \geq 0 \}$. Since $\nu_F \cG^{'}$ is conical, its compactly supported cohomology equals $H_o^*(K_q, \nu_F \cG^{'})$, the cohomology with support at the origin $o \in V/V_F$. 

If $(x,p) \in T^*_{F^{\star}}V$ but $(x,p) \notin \Delta(F)^{\circ}$, then for all $q$ in a sufficiently small neighborhood, $K_q \cap \Delta(F)$ has non-empty interior. In this case, the isomorphism $(\nu_F \cG^{'})_F \to (\nu_F \cG^{'})_{\Delta(F)}$ remains an isomorphism after restricting to $K_q$. This shows $H_o^*(K_q, \nu_F \cG^{'}) = 0$.  

\end{proof}

\begin{remark}
The singular support of a sheaf is a union of conical Lagrangians. On the other hand, $\Delta(F)^{\circ}$ contains such a Lagrangian only when $F$ is a chamber or a facet. It follows that we can restrict the union in \eqref{eq:unionoverfaces} to chambers and facets. We will not use this below.
\end{remark}
\begin{lemma} \label{cor:restisisom}
Suppose that $\cG$ recedes from $\Delta$. The restriction map $H^*(V, \cG) \to \cG_{\Delta}$ is an isomorphism.
\end{lemma}
\begin{proof}
Fix an inner product on $V$ and a point $x_0$ in the interior of $\Delta$. Let $\phi = |x-x_0|^2$.    

We compute $H^*(V,\cG)$ by taking the limit along the increasing sequence of opens $V_r = \phi^{-1}[0,r)$. Given a hyperplane $H$ in $\textgoth{A}$, let $v$ be the normal vector pointing away from $\Delta$. Then $(d\phi(x), v)$ is positive for any $x \in H$. It follows that $d\phi(x) \notin ss(\cG)$ for all $x \neq x_0$.

By the microlocal Morse lemma \cite[Corollary 5.4.19]{KS94}, the sequence of cohomology groups $H^*(V_r, \cG)$ is constant and thus equal to its initial term $\cG_{x_0} = \cG_{\Delta}$.
\end{proof}
By Lemma~\ref{lem:Hrecedes}, the sheaf $\cH$ recedes from $\Delta$, so we then find that:
\begin{corollary} \label{cor:image-is-cocore2} 
$  H^*(V,\cH) = \cH_{\Delta}. $
\end{corollary}
This completes the proof of Lemma~\ref{lem:image-is-cocore} and thus of Lemma~\ref{lem:quiver-covergeneralA}.

\bibliography{./gen}
\bibliographystyle{amsalpha}

\end{document}